\newcommand{\TheTitle}{Discretizations of the spectral fractional Laplacian on general domains with Dirichlet, Neumann, and Robin boundary conditions} 
\newcommand{\TheAuthors}{Nicole Cusimano, F\'elix del Teso, Luca Gerardo-Giorda, Gianni Pagnini}
\title{{\TheTitle}\thanks{Published in SIAM Journal on Numerical Analysis.}}
\author{
	Nicole Cusimano\thanks{Basque Centre for Applied Mathematics (BCAM), Bilbao, Spain (\email{ncusimano@bcamath.org}, \email{lgerardo@bcamath.org}, \email{gpagnini@bcamath.org})}
	\and
	F\'elix del Teso\thanks{Norwegian University for Science and Technology (NTNU), Trondheim, Norway (\email{felix.delteso@ntnu.no})}
	\and 
	Luca Gerardo-Giorda\footnotemark[2]
	\and 
	Gianni Pagnini\footnotemark[2]
		}
\newcommand{\B}{\mathcal{B}}
\newcommand{\R}{\mathbb{R}}
\newcommand{\N}{\mathbb{N}}
\newcommand{\Lit}{\mathcal{L}}
\newcommand{\LBs}{\mathcal{L}_{g}^s}
\newtheorem{remark}{Remark}
\newtheorem{defin}[theorem]{Definition}
\numberwithin{equation}{section}
\numberwithin{theorem}{section}
\begin{document}

\maketitle

\begin{abstract}
In this work, we propose novel discretizations of the spectral fractional Laplacian on bounded domains based on the integral formulation of the operator via the heat-semigroup formalism. Specifically, we combine suitable quadrature formulas of the integral with a finite element method for the approximation of the solution of the corresponding heat equation. We derive two families of discretizations with order of convergence depending on the regularity of the domain and the function on which the spectral fractional Laplacian is acting. Our method does not require the computation of the eigenpairs of the Laplacian on the considered domain, can be implemented on possibly irregular bounded domains, and can naturally handle different types of boundary constraints. Various numerical simulations are provided to illustrate performance of the proposed method and support our theoretical results.	
\end{abstract}

\begin{keywords}
Fractional Laplacian, Bounded Domain, Homogeneous and Nonhomogeneous Boundary Conditions, Heat-Semigroup Formulation, Finite elements, Integral Quadrature.  
\end{keywords}

\begin{AMS}
47G20, 65N30, 41A55, 65N15, 35R11, 26A33, 65R10, 65N25.
\end{AMS}

\section{Introduction}
Fractional operators, and nonlocal operators in general,
became of high interest in mathematical modeling because of their success  
in overcoming difficulties encountered when trying to explain phenomena and fit data in complex systems.
Fractional generalizations of model equations have been studied both with time-fractional derivatives (e.g., diffusion problems in biologcal systems \cite{metzler_etal-pccp-2014,molina_etal-pre-2016}
and porous media \cite{plociniczak-cnsns-2015}, or in relaxation phenomena and viscoelasticity \cite{rossikhin_etal-amr-2010,mainardi-book-2010}) and with space-fractional operators (e.g., in plasma physics \cite{delcastillonegrete_etal-pp-2012}, quantum mechanics \cite{zhang_etal-prl-2015}, contaminant dispersion \cite{zhang_etal-wrr-2016} or cardiac electrical propagation~\cite{buenoorovio_etal-jrsi-2014,cusimano_etal-plos-2015})
in relation also to experimental evidence
(e.g.,~\cite{gambin_etal-prl-2005,golding_etal-prl-2006,ratynskaia_etal-prl-2006,mercadier_etal-np-2009,humphries_etal-pnas-2012,ariel_etal-nc-2015,manzo_etal-rpp-2015}).

Preliminary results of fractional models are promising and encourage the exploration of their use in more realistic settings, so that interesting mathematical models could phase into physically grounded equations, able to predict phenomena observed in experiments. 
The present study is motivated by providing accessible and effective tools for real world models involving fractional calculus operators.

In order to fix the physical foundation of fractional models,
bounded domains and boundary conditions come into play \cite{hilfer-fcaa-2015}.

Typically, experiments are set up so that no effects are expected from outside the domain where the sample is contained and the local boundary conditions should fully determine the evolution of the process under study. 
For example, the observable under investigation can be zero at the boundaries,
mathematically described by homogeneous Dirichlet boundary conditions.
However, depending on the phenomena and the domain observed, other boundary conditions involving the specification of a nonzero value of the observable or a particular expression of its flux at the boundaries are required. This last example is the case of an insulated domain where the imposition of reflecting boundaries (i.e., homogeneous Neumann boundary conditions) ensures mass conservation. 
When the observed process takes place in a domain characterized by high spatial complexity and/or structural heterogeneity we cannot ignore what happens at the interface between different structural compartments nor the conditions imposed at the boundary of the considered domain; hence the need for a consistent definition of nonlocal operators with boundary conditions. 

With this in mind, in the present paper we study a family of nonlocal operators defined 
in a consistent manner with the bounded domain on which they act and the corresponding local boundary conditions.

Since this issue concerns the nonlocality in space, we consider the fractional Laplacian as a prototype operator.  
For this nonlocal operator many definitions exist
(e.g., Fourier multiplier~\cite{SKM93}, Riesz potential operator~\cite{Lan72},
hypersingular integral~\cite{diN2012}, heat-semigroup~\cite{Sti2010}, 
extension problem~\cite{Caf2007}) 
and these definitions are proved to be equivalent for sufficiently smooth functions in $\mathbb{R}^n$, $n \geq 1$, \cite{Kw2015}.
However, when the nonlocal operator is considered on bounded domains, this equivalence no longer holds. In the literature, one can find many different approaches for the definition or restriction of the fractional Laplacian in the bounded domain case and often, for each of these definitions, various schemes are proposed for the numerical computation of the considered operator (see e.g., \cite{HO2014,Cia2015,dTe14,delia_etal-cma-2013,Bon2015,bonforte_etal-arxiv-2017, DrJa14,
	CiJa14,zayernuri_etal-siam-2014,DiP2010,Du2012,acosta_etal-siam-2015}). 

The work presented in this manuscript is based on the spectral definition of the fractional Laplacian on an open set $\Omega \subset \mathbb{R}^n$~\cite{cabre_etal-2010,stinga_etal-cpde-2010,Vaz2014}. Let $\Lit$ be a linear non-negative second order differential operator on $L^2(\Omega)$,  self-adjoint and with a compact inverse. Then there is an orthonormal basis of $L^2(\Omega)$ made of eigenfunctions $\{\varphi_m\}_{m\in\mathbb{N}}$ of $\Lit$, i.e., $\Lit \varphi_m=\lambda_m \varphi_m$, such that $\lambda_m \geq 0$, and $\lambda_m\to \infty$. Then, $\Lit u=\sum_m \lambda_m \langle u, \varphi_m \rangle  \varphi_m$, and fractional powers of the operator, namely $\Lit^s$, with $s\in (0,1)$, are defined as
\begin{equation}
\label{eq:spectraldef}
\Lit^s u= \sum_m \lambda_m^s \langle u, \varphi_m \rangle  \varphi_m,
\end{equation}
for all functions $u\in L^2(\Omega)$ such that $\sum_m \lambda_m^{2s} |\langle u, \varphi_m \rangle |^2 <\infty$ ($\langle \cdot , \cdot \rangle$ denotes the inner product in $L^2(\Omega)$). The choice of functional spaces on which $\Lit$ satisfies the aforementioned properties and the particular expression of eigenvalues $\lambda_j$ and eigenfunctions $\varphi_j$ depend not only on $\Lit$ but also on $\Omega$ and on the boundary conditions considered for this domain. Therefore, here and throughout the rest of the manuscript, we use the subscript $\mathcal{B}$ to highlight that a given operator, in the bounded setting, is considered together with some boundary conditions. For example, we denote the Laplacian on $\Omega$ by $(-\Delta_{\B})$ rather than simply $(-\Delta)$.

When $\Lit=(-\Delta_{\B})$, from \eqref{eq:spectraldef} we obtain the definition of the spectral fractional Laplacian $(-\Delta_{\B})^s$ on $\Omega$ and the notation is consistent with the one used for the Dirichlet and Neumann spectral fractional Laplacians in references \cite{Vaz2014}, \cite{stinga_etal-2015}, and \cite{grubb-mn-2016}. The fact that different boundary conditions modify the definition of the nonlocal operator on the bounded domain $\Omega$ is in complete agreement with our initial physical considerations and motivates our choice of working with the spectral definition rather than any other definition of fractional operator available in the literature.

Regarding the numerical approximation of fractional equations on bounded domains $\Omega$ involving the spectral fractional Laplacian, we mention here two strategies. On the one hand, the matrix transfer technique, initially proposed by
Ili\'c et al.~\cite{ITB2005,ITB2006}, computes a finite difference approximation of the spectral fractional Laplacian via the direct approximation of the eigenvalues and eigenvectors of the standard discrete Laplacian coupled to some given boundary conditions. 
This method can be easily adopted on simple geometries but, in the case of general domains, only estimates of the eigenpairs are available, and computations require the solution of a very large number of eigenvalue problems. An efficient method for approximating the eigendecomposition of the Laplacian on more general domains has been recently proposed in \cite{song_etal-2017}.
On the other hand, for general Lipschitz domains $\Omega\subset \mathbb{R}^n $, $n>1$, 
Nochetto et al.~\cite{Noc2015} developed a finite element (FE) technique to compute the solution 
to the fractional Poisson equation involving the spectral fractional Laplacian with homogeneous Dirichlet boundary conditions 
and interpreted in light of the Caffarelli--Silvestre extension method~\cite{Caf2007}. See \cite{dTe14} for a similar approach in the context of finite differences. Bonito and Pasciak in \cite{Bon2015} proposed an integral formulation for fractional powers of the inverse of the Laplacian to solve the fractional Poisson equation.

Differently from the approaches mentioned above, we exploit the fact that definition \eqref{eq:spectraldef} is equivalent to 
the one based on the heat-semigroup~\cite{Sti2010}. In particular, for the spectral fractional Laplacian we have
\begin{equation}
\label{eq:semigr}
(-\Delta_{\B})^s u(x) 
=\frac{1}{\Gamma (-s)}\int_0^\infty \left( e^{t\Delta_{\B}}u(x)-u(x)\right) \frac{dt}{t^{1+s}},
\end{equation}
where $e^{t\Delta_{\B}} \, u$ represents the solution of the classical heat equation on the bounded domain $\Omega$ with initial condition $u$ and boundary conditions $\B$.  

The connection between the nonlocal operator and the local heat equation provided by equation \eqref{eq:semigr} allows us to develop an accessible and effective method for the numerical computation of the considered nonlocal operator on general bounded domains and with various boundary constraints. Specifically, in the present paper we compute the value of the operator $(-\Delta_{\B})^s$ on a given function by combining a FE approach for the solution of the heat equation and a quadrature rule for the integral in~\eqref{eq:semigr} with respect to the measure $\displaystyle{d\mu(t)=\frac{dt}{t^{1+s}}}$. We provide a suitable discretization for the considered fractional operator and prove convergence of the numerical approximation. The cases of Dirichlet, Neumann and Robin boundary conditions are studied in details and several numerical results are provided. The numerical method is also used to compute the numerical approximation to a fractional porous medium type equation on a bounded domain: the sharp boundary behavior predicted in~\cite{bonforte_etal-arxiv-2017} is recovered.

Fractional powers of the Laplacian with homogeneous boundary conditions are our main focus. However, under suitable assumptions, we propose a possible strategy to deal with the case of non-homogeneous boundary conditions, and we outline how our approach can be applied to compute fractional powers of more general second order elliptic operator $\Lit$ and how it can be adapted to deal with the case of more general boundary conditions. 

\textbf{Organization of the paper.} In \cref{sec:main}, we introduce our framework, some definitions, the concepts of the FE theory, and provide the general form of the quadrature approximation that will be used in the rest of this work. After stating our main assumptions, in \cref{sec:LO} we provide our first discretization and the corresponding proof of convergence. Under suitable assumptions, a similar approach is used to derive a higher order discretization of the nonlocal operator, as shown in detail in \cref{sec:HO}. In \cref{sec:inhomBC} we study the case of non-homogeneous boundary conditions. We define a fractional-type operator corresponding to the Laplacian coupled with non-homogeneous boundary conditions and convergent discretizations of the considered operators are provided. Possible extensions of our main results to more general operators is outlined in \cref{sec:genL}.
In \cref{sec:experiments}, we present numerical experiments to support our results and to illustrate the performance of the presented numerical methods. Our conclusions and future research directions are given in \cref{sec:conclusions}.

\section{Definitions and concepts}
\label{sec:main}
\subsection{Notations and functional setting}
\underline{\textit{Boundary conditions.}} Let $\Omega$ be a bounded domain in $\mathbb{R}^n$, $u:\Omega \to \mathbb{R}$, $\kappa:\partial \Omega \to \mathbb{R}$, and $\frac{\partial u}{\partial \nu}$ be the derivative of $u$ along the unit outward normal direction on $\partial \Omega$. We denote by $\mathcal{B}$ one of the following boundary operators:  
\begin{align}
\label{eq:DirB}&\mathcal{B}(u)=u, \\
\label{eq:NeuB}&\mathcal{B}(u)=\frac{\partial u}{\partial \nu}, \\
\label{eq:RobB}&\mathcal{B}(u)=\kappa~u+\frac{\partial u}{\partial \nu}, \quad \kappa(x)>0 \ \text{ for all } \ x\in \partial \Omega,
\end{align}
which are known as Dirichlet, Neumann, and Robin boundary operators, respectively. For now we simply assume that the function $u$ is regular enough so that all of the above makes sense. 

\underline{\textit{Eigenvalue problem.}} Let $\Omega$ be a Lipschitz bounded domain and $\B$ one of the the above boundary operators. Then, there exists an orthonormal basis of $L^2(\Omega)$ consisting of eigenfunctions $\{\varphi_m\}_{m\in \N}$ of the Laplacian operator $-\Delta$ coupled to homogeneous boundary conditions $\B(\cdot)=0$, that is, non-trivial solutions of
	\begin{equation}
	\label{eq:eigProb}
	\left\{
	\begin{array}{lll}
	-\Delta \varphi_m=\lambda_m \varphi_m & \textup{in} &\Omega,\\
	\B(\varphi_m)=0 &  \textup{on} & \partial \Omega.
	\end{array} 
	\right.
	\end{equation}
	Moreover, the corresponding eigenvalues $\{\lambda_m\}_{m\in\mathbb{N}}$ are all non-negative and form a non-decreasing sequence such that $\lambda_m\to \infty$ (see e.g. \cite{Salsa2008}).

\underline{\textit{Functional spaces.}} Given an open set $\Omega$ and $r\geq 0$, $H^{r}(\Omega)$ denotes the classical Sobolev space $W^{r,2}(\Omega)$ (cf.~\cite{dNPaVa12}). 
We denote by $\|\cdot\|_r$ the classical norm of $H^r(\Omega)$. 
Throughout the manuscript we will always use $\|\cdot\|_0$ to indicate the norm in $H^0(\Omega)=L^2(\Omega)$. $H_0^1(\Omega)$ is the set of functions $u\in H^1(\Omega)$ with null trace on $\partial \Omega$.  
For any given $r\geq0$, let $\mathbb{H}_\B^r(\Omega)$ be the space of $u\in L^2(\Omega)$ such that
	\begin{equation}
	\label{eq:norm}
	\|u\|_{\mathbb{H}_\B^r}=\left(\sum_{m=1}^\infty\lambda_m^r|\hat{u}_m|^2\right)^{1/2}<\infty,
	\end{equation}
	where $\hat{u}_m:=\langle u, \varphi_m \rangle$ denotes the $L^2(\Omega)$ inner product of $u$ with the basis function $\varphi_m$ previously introduced. Most of the results in this manuscript will be stated in terms of the norm $\|\cdot\|_{\mathbb{H}_\B^r}$ given by \eqref{eq:norm}, for some suitable non-negative integer $r$. When $r$ is a non-negative integer,	
	$\mathbb{H}_\B^r(\Omega)$ has the following characterization (see Lemma~2.2 in \cite{bramble_etal-ampa-1974} for the Dirichlet case, with similar proofs for the Neumann and Robin cases): 
	\begin{equation}
	\label{eq:Hbchar}
	\mathbb{H}_\B^r(\Omega)=\{u\in H^r(\Omega) ~|~ \B(\Delta^ju)=0 \text{ on } \partial \Omega,  ~\forall~ \text{non-negative integer } j<r/2 \},
	\end{equation}and the norm $\| \cdot \|_{\mathbb{H}_\B^r}$ is equivalent to the usual norm $\|\cdot \|_r$ in $H^r(\Omega)$. The conditions imposed on $\Delta^j u$ at the boundary are known in the literature as \textit{compatibility conditions}.

\subsection{Fractional powers of the Laplacian in bounded domains} 
Fractional powers with exponent $s\in(0,1)$ of the Laplacian coupled to the boundary condition $\B$ are defined in the spectral sense as
\begin{equation}\label{eq:eigSerie}
(-\Delta_\B)^s u:= \sum_{m=1}^\infty \lambda_m^s \hat{u}_m \varphi_m,
\end{equation}
 for all functions $u\in L^2(\Omega)$ such that $\sum_m \lambda_m^{2s} |\hat{u}_m|^2<\infty$ (see e.g., \cite{cabre_etal-2010}). These conditions on $u$ are a quite common requirement in the literature dealing with the spectral fractional Laplacian and we will proceed under these assumptions. However, wanting to work in full generality, the assumptions on the function $u$ could be relaxed.
 
Let $\B$ be one of the boundary operators introduced in \eqref{eq:DirB}, \eqref{eq:NeuB}, or \eqref{eq:RobB}. We let $w(x,t):=e^{t\Delta_{\B} }u(x)$ denote the solution of the heat equation
	\begin{equation}
	\label{eq:heat}
	\left\{
	\begin{array}{lcl}
	\partial_t w-\Delta w = 0 & \quad & (x,t)\in \Omega\times (0,\infty),\\
	w(x,0)=u(x)& & x\in \Omega,\\
	\B(w(\cdot,t))(x)=0 & & (x,t) \in \partial \Omega\times [0,\infty).
	\end{array}
	\right.
	\end{equation}
	By separation of variables, the function $w$ can be written as the following series:
	\begin{equation}
	\label{eq:Wseries}
	w(x,t)=\sum_{m=1}^\infty e^{-\lambda_m t}\hat{u}_m \varphi_m(x),
	\end{equation}
	where $\lambda_m$ and $\varphi_m$ are the eigenpairs defined in \eqref{eq:eigProb}. Let $w_\infty$ denote the steady-state of \eqref{eq:general}. Using the series expansion of $w$ we can easily see that $w_\infty=0$ when $\lambda_m>0$ for all $m$ (as in the Dirichlet case and the Robin case with $\kappa>0$), while $w_\infty=\hat{u}_1\varphi_1=\frac{1}{|\Omega|}\int_{\Omega}u~dx$ when $\lambda_1=0$ and $\lambda_m>0$ for $m\geq 2$ (as in the Neumann case).

On a bounded domain $\Omega$, the definition of $(-\Delta_\B)^s$ given by \eqref{eq:eigProb}-\eqref{eq:eigSerie} is equivalent to the following one:
\begin{defin}\label{def:fraclap}
Let $\Omega$ be a bounded domain, $s\in(0,1)$ and $u\in \mathbb{H}_\B^{2s}(\Omega)$. The \textbf{spectral $\B$-fractional Laplacian} of $u$ in $\Omega$ is given by
\begin{equation}
\label{eq:start}
(-\Delta_{\B})^{s}u(x)=\frac{1}{\Gamma(-s)}~\int_0^\infty \left(e^{t\Delta_{\B} }u(x)-u(x)\right)~ \frac{dt}{t^{1+s}},
\end{equation}
where $e^{t\Delta_{\B} }u(x)$ is the solution of \eqref{eq:heat}.
\end{defin}
In \cref{sec:inhomBC} we will extend definition \eqref{eq:start} to a fractional-type operator associated with non-homogeneous boundary conditions.

\subsection{Basics on the finite elements theory}
To compute FE approximations of the heat equation solution, we consider the heat problem \eqref{eq:heat} in weak form.
The weak formulation of the heat equation~\eqref{eq:heat} is as follows: given $u\in L^2(\Omega)$ find $w\in L^2(0,\infty; V)\cap C([0,\infty);L^2(\Omega))$ such that
\begin{equation}
\label{eq:general}
\left\{
\begin{array}{l}
\dfrac{d}{dt} \langle w(t),v\rangle+a( w(t),v)=0, \quad \forall v \in V, t>0\\
w(0)=u,
\end{array}
\right.
\end{equation}
where $a(\cdot,\cdot)$ is the bilinear form corresponding to the particular set of boundary conditions considered in~\eqref{eq:heat} and $V$ is a suitable functional space (i.e., $H^1_0(\Omega)$ for homogeneous Dirichlet conditions or $H^1(\Omega)$ in the homogeneous Neumann and Robin cases). In particular,
$a(w,v)=\int_{\Omega}\nabla w \cdot \nabla v$ for the Dirichlet and Neumann cases, while $a(w,v)=\int_{\Omega}\nabla w \cdot \nabla v + \langle \kappa~ w,v\rangle_{L^2(\partial \Omega)}$ in the Robin case, with $\langle \cdot,\cdot\rangle_{L^2(\partial \Omega)}$ denoting the scalar product in $L^2(\partial \Omega)$. Note that the series expansion given by \eqref{eq:Wseries} still holds for the weak solution $w$ (see e.g. Lemma 7.2-1 in \cite{RT1983}).

We introduce here the notation and concepts of the FE theory that we will use throughout this manuscript. 

	\noindent \textbf{Triangulation}. 
	Let $h>0$ and $\mathcal{T}_h$ denote a triangulation  of $\bar{\Omega}$ such that the diameter of each element $K \in\mathcal{T}_h$ is smaller than $h$. 
	
	\noindent \textbf{Spatial nodes}. Let $\Omega_h$ denote the set $\{x_i\}_{i=1}^{N_h}$ of $N_h$ nodes in the considered triangulation.
	
	\noindent \textbf{Time points}. Let $\varDelta t>0$ and $t_j=j\varDelta t$, for $j=0,1,\dots,N_t$, be a uniform grid of time points used to discretize the interval $[0,T]$, for a given $T>0$. 
	
	\noindent \textbf{Finite dimensional space and FE basis}. Let $\mathbb{P}_k$, $k\geq 1$, denote the space of polynomials of degree less than or equal to $k$ and let $X_h^k$ be the space of triangular FE, that is, 
	\[
	X_h^k:=\left\{v_h\in C^0(\bar{\Omega})~|~ v_{h|K}\in \mathbb{P}_k ~~\forall K\in \mathcal{T}_h \right\}.
	\]
	Let $V_h=V\cap X_h^k$ be the finite dimensional space approximating $V$ and $\{\phi_i\}_{i=1}^{N_h}$ be the FE basis of $V_h$ given by the functions $\phi_i\in X_h^k$, such that $\phi_i(x_j)=\delta_{ij}$ for all $x_j\in\Omega_h$.

	\noindent \textbf{$L^2(\Omega)$-orthogonal projection}. Let $u\in L^2(\Omega)$ and $P^k_h$ be the $L^2(\Omega)$-orthogonal projection operator over the space $X_h^k$, that is, the operator $P^k_h:L^2(\Omega)\to X^k_h$ such that $\langle P^k_hu,\phi\rangle = \langle u, \phi \rangle$ for all $\phi\in X_h^k$.

	\noindent \textbf{Stiffness and Mass matrices}. The stiffness matrix $A$ and the mass matrix $M$ corresponding to the FE approximation of the problem \eqref{eq:general} on $V_h$ have entries $A_{ij}=a(\phi_i,\phi_j)$ and $M_{ij}=\langle \phi_i,\phi_j\rangle$ for $i,j=1,\dots,N_h$, respectively.

\begin{defin}\label{def:FE}
Let $\Omega$ be a bounded domain, $u\in L^2(\Omega)$, $h,\varDelta t>0$, and $\theta\in[0,1]$. Let $\mathbf{W}^{(j)}$ be the solution vector of the iterative linear system
\begin{equation}
	\label{eq:fullydisc}
	\begin{array}{l}
	(M+\theta \varDelta t A)\mathbf{W}^{(j)}=(M+(\theta-1)\varDelta t A)\mathbf{W}^{(j-1)}\quad \text{for}\quad j=1,\ldots,N_t, \quad 
	\end{array}
\end{equation}
where the initial condition $\mathbf{W}^{(0)}$ has components $W_i^{(0)}=P^k_hu(x_i)$, for $i=1,\dots,N_h$.
The \textbf{FE solution of degree $k$} of \eqref{eq:general} is the function $w_h:\overline{\Omega} \times  \{(\varDelta t\cdot \mathbb{N})\cap [0,T]\}\to \mathbb{R}$ such that
	\begin{equation}
	\label{eq:FEsol}
	w_h(x,t_j)=\sum_{i=1}^{N_h}W_i^{(j)}\phi_i(x), \quad \text{for all}\quad x\in \overline{\Omega},
	\end{equation}
	where $\{\phi_i\}_{i=1}^{N_h}$ is the FE basis of degree $k$.
\end{defin}

\subsection{Definition of the discrete operator}
Given $u\in \mathbb{H}_\B^{2s}(\Omega)$, we denote by $\Theta^s$ the following discrete operator in time:
\begin{equation}\label{eq:quadrule}
	\Theta^s u(x):=\frac{1}{\Gamma(-s)}\left[\sum_{j=1}^{N_t}\left( w(x,t_j)-u(x)\right)~\beta_j+(w_\infty -u(x))~\beta_\infty\right],
\end{equation}
where $w(x,t_j)$ is the solution of \eqref{eq:general} at the time points $t_j=j\varDelta t$, $N_t\in \mathbb{N}$ is a suitable integer, the $\beta_j$ are weights defining a suitable quadrature rule for \eqref{eq:start}, $w_\infty$ is the steady-state of \eqref{eq:general}, and $\beta_\infty$ is the value of the tail of the integral with respect to the measure $\frac{dt}{t^{1+s}}$(see \cref{thm:main} and \cref{thm:HOmain} for further details).

Combining \eqref{eq:quadrule} with the FE we get the following approximation of the spectral $\B$-fractional Laplacian $(-\Delta_{\B})^{s}$ of a given $u$:

\begin{defin}\label{def:discOp}
	Let $w_h$ be the FE solution of degree $k$ of \eqref{eq:general} defined by equation \eqref{eq:FEsol}, corresponding to the initial datum $u$. Then, for all $x\in\Omega$, the \textbf{approximation} of the $\B$-fractional Laplacian of $u$ is defined as 
	\begin{equation}
	\label{eq:discop}
	\Theta^s_h u(x) :=\frac{1}{\Gamma(-s)}\left[\sum_{j=1}^{N_t}(w_h(x,t_j)-w_h(x,0))~\beta_j +(w_\infty-u(x))~\beta_\infty\right].
	\end{equation}
\end{defin}

\begin{remark}
In \cref{sec:LO} and \cref{sec:HO} we propose two different choices of the weights $\beta_j$ leading to approximations of $(-\Delta_\B)^s u$ with different accuracy. The weights $\beta_j$ defined in \eqref{eq:weights1} correspond to a composite midpoint quadrature rule while the ones in \eqref{eq:defBetas} correspond to a quadrature defined via a piecewise linear interpolant.
\end{remark}

\section{Main result}
\label{sec:LO}
\subsection{Assumptions}
In order to prove our main convergence results, we need some assumptions on the domain $\Omega$, the triangulation $\mathcal{T}_h$, the function $u$, the spatial and the temporal steps $h$ and $\Delta t$, respectively. Specifically, we assume:
\begin{enumerate}
	\item[($A_\Omega$)] $\Omega$ is a bounded convex polytope (see Remark 2 below).
	
	\item[($A_{\B}$)] The boundary operator $\B$ is given by \eqref{eq:DirB}, \eqref{eq:NeuB}, or \eqref{eq:RobB}. If $\B$ is the Robin boundary operator, then $\kappa\in C^1(\partial \Omega)$ and $\kappa(x)>0$ for all $x\in \partial \Omega$.
	
	\item[($A_u$)] $u\in \mathbb{H}_\B^{k+1}(\Omega)$, with $k\geq 1$ (degree of the FE).
	
	\item[($A_{\mathcal{T}_h}$)] $\mathcal{T}_h$ is a quasi-uniform family of triangulations: letting $h_K:= \mathrm{diam}(K)$, $K\in\mathcal{T}_h$, and 
	$\rho_K:= \mathrm{sup}\{\mathrm{diam}(S)|S\text{ is a ball contained in }K\}$, 
	there exist two constants, $\sigma\geq 1$ and $\tau>0$, such that $\max_K h_K/\rho_K \leq \sigma$ and $\min_K h_K\geq \tau~ h$, $\forall h>0$.

	\item[($A_{\text{CFL}}$)] If $0\leq \theta <\frac{1}{2}$, then $\varDelta t ~h^{-2}\leq c_\mu/(1-2\theta)$,	where $c_\mu$ is a positive constant independent from both $\varDelta t$ and $h$, derived from a bound for the largest eigenvalue of the bilinear form $a(\cdot,\cdot)$ on the finite-dimensional space $V_h$ (see e.g., Chapter 6 in reference~\cite{QV2008} for details).
\end{enumerate}

\begin{remark}
		We have assumed the domain to be a convex polytope so that the technicalities related to FE approximations in the case of curvilinear boundaries and re-entrant corners can be avoided. However, assumption \textup{($A_\Omega$)} can be relaxed to $\Omega$ being a bounded domain with Lipschitz boundary $\partial \Omega$. The case of Lipschitz domains with curvilinear boundaries can be dealt with, for example, as proposed in \cite{bernardi-siam-1989}, while suitable mesh refinements near re-entrant corners allow the recovery of optimal error bounds for non-convex domains as shown in the work by Chatzipantelidis et al. \cite{chatzipantelidis_etal-bit-2006} and references therein.
		\end{remark}

\subsection{Low order approximation}

We now state our first main result, which concerns homogeneous boundary conditions.

\begin{theorem}\label{thm:main}
Let $s\in(0,1)$, $h>0$, $\theta\in[0,1]$, $k\in\mathbb{N}$, and 
\begin{equation}\label{eq:restr}
\varDelta t = \eta~ h^p \quad \text{for some } \eta>0 \text{ and } p \in\left\{ 
\begin{split}
[2,k+1] \quad \text{if } \quad 0\leq \theta <\frac{1}{2},  \\
(0,k+1] \quad \text{if } \quad \frac{1}{2}\leq \theta \leq1.
\end{split} \right.
\end{equation}
Assume that \textup{($A_\Omega$), ($A_u$), ($A_{\B}$), ($A_{\mathcal{T}_h}$), ($A_{\text{CFL}}$)} hold, and let $\Theta^s_h  $ be defined by \eqref{eq:discop} where:
\begin{enumerate}[(a)]
\item $w_h$ is the FE solution of degree $k$ of \eqref{eq:general} with initial datum $u$;

\item $N_t\geq \dfrac{1-s}{\lambda_{\rm{min}}\varDelta t}\log\left(\dfrac{1}{\varDelta t}\right)$, $\lambda_{\rm{min}}$ being the first nonzero eigenvalue of $(-\Delta_\B)$;

\item the weights $\beta_j$, 
for $j=1,\dots,N_t$, are given by 
\begin{equation}\label{eq:weights1}
\beta_j:=\int_{t_j-\varDelta t/2}^{t_j+\varDelta t/2}\frac{dt}{t^{1+s}}=\frac{1}{s}\left[ \frac{1}{\left(t_j-\frac{\varDelta t}{2}\right)^s}-\frac{1}{\left(t_j+\frac{\varDelta t}{2}\right)^s}\right],
\end{equation}
and 
\[
\beta_\infty:=\int_{t_{N_t}+\varDelta t/2}^\infty \frac{dt}{t^{1+s}}= \frac{1}{s ((N_t+1/2)\varDelta t)^s}.
\]

\end{enumerate}
Then, there exists a constant $C>0$ independent of $h$ such that
\begin{equation}
\label{eq:ofMain}
	\|(-\Delta_{\mathcal{B}})^su-\Theta^s_h u\|_{0} \leq C~h^{p(1-s)}.
\end{equation}
\end{theorem}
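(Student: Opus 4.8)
The plan is to insert the time-discrete operator $\Theta^s$ of \eqref{eq:quadrule} (which uses the \emph{exact} heat solution $w$) as an intermediate object and split by the triangle inequality:
\[
\|(-\Delta_{\B})^s u - \Theta^s_h u\|_0 \le \underbrace{\|(-\Delta_{\B})^s u - \Theta^s u\|_0}_{\text{quadrature error}} + \underbrace{\|\Theta^s u - \Theta^s_h u\|_0}_{\text{finite element error}}.
\]
The reason for splitting in exactly this way is that both $\Theta^s$ and $\Theta^s_h$ carry the \emph{same} tail term $(w_\infty-u)\beta_\infty$, which therefore cancels in the second difference, leaving an FE error that only involves the sum over $j=1,\dots,N_t$.

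For the quadrature error I would work spectrally. Writing $u=\sum_m\hat u_m\varphi_m$ and using the integral representation $\lambda_m^s\,\Gamma(-s)=\int_0^\infty(e^{-\lambda_m t}-1)\,d\mu(t)$ with $d\mu(t)=\frac{dt}{t^{1+s}}$, the error decouples across modes, its $m$-th coefficient $E_m$ being the difference between $\lambda_m^s$ and the quadrature applied to $t\mapsto e^{-\lambda_m t}-1$. Splitting the exact integral over $[0,\Delta t/2]$, over $\bigcup_j[t_j-\Delta t/2,t_j+\Delta t/2]$, and over the tail $[t_{N_t}+\Delta t/2,\infty)$, I bound $E_m$ by (I) the dropped initial piece, (II) the midpoint errors on the $I_j$, and (III) the tail remainder, which reduces to $\int_{\text{tail}}e^{-\lambda_m t}\,d\mu$ after using $\beta_\infty=\int_{\text{tail}}d\mu$. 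The elementary inequality $1-e^{-x}\le\min\{1,x\}$ together with integrability of $t^{-s}$ at the origin gives $|\text{(I)}|\lesssim\lambda_m\Delta t^{1-s}$; a crude Lipschitz estimate on $g_m(t):=e^{-\lambda_m t}-1$, namely $|g_m(t)-g_m(t_j)|\le\lambda_m e^{-\lambda_m(t_j-\Delta t/2)}|t-t_j|$, summed against the convergent series $\sum_j(j-\tfrac12)^{-1-s}$, gives $|\text{(II)}|\lesssim\lambda_m\Delta t^{1-s}$; and the choice of $N_t$ in (b), which forces $e^{-\lambda_{\min}N_t\Delta t}\le\Delta t^{1-s}$, makes (III) at most of the same order. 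Hence $|E_m|\le C\lambda_m\Delta t^{1-s}$, and summing $\sum_m|E_m|^2|\hat u_m|^2$ using $u\in\mathbb{H}_\B^{k+1}\subseteq\mathbb{H}_\B^2$ yields $\|(-\Delta_{\B})^s u-\Theta^s u\|_0\lesssim\Delta t^{1-s}=\eta^{1-s}h^{p(1-s)}$.

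For the FE error I would use $w_h(\cdot,0)=P^k_h u$ to rewrite each summand as $(w(t_j)-w_h(t_j))-(u-P^k_h u)$, separating a genuine FE-solution error from the $L^2$-projection error, the crucial scaling being $\sum_{j=1}^{N_t}\beta_j\le\frac{2^s}{s}\Delta t^{-s}$. The projection part, via $\|u-P^k_h u\|_0\le Ch^{k+1}\|u\|_{k+1}$, contributes $\lesssim h^{k+1}\Delta t^{-s}=\eta^{-s}h^{k+1-ps}$, which is $\le h^{p(1-s)}$ \emph{precisely because} $p\le k+1$. For the solution part I would invoke the classical uniform-in-time $L^2$ error bounds for the $\theta$-scheme FE discretization of \eqref{eq:general} (valid for $0\le\theta<\tfrac12$ under the CFL restriction $(A_{\text{CFL}})$, which $p\ge2$ guarantees), namely $\|w(t_j)-w_h(t_j)\|_0\le C(h^{k+1}+\Delta t^{q})$ with $q=2$ for $\theta=\tfrac12$ and $q=1$ otherwise; multiplying by $\sum_j\beta_j\sim\Delta t^{-s}$ and substituting $\Delta t=\eta h^p$ produces $\lesssim h^{p(1-s)}$ in every case (indeed the better rate $h^{p(2-s)}$ for Crank--Nicolson). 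Collecting the three estimates gives \eqref{eq:ofMain}.

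The main obstacle I anticipate is not the quadrature—the singular weight is mild enough that first-order Lipschitz bounds already suffice for this low-order scheme—but rather securing FE estimates that are \emph{uniform in $t_j$} over the whole discrete interval $[0,t_{N_t}]$, whose length grows like $\Delta t^{-1}\log(1/\Delta t)$ as $h\to0$; here the dissipativity of the heat semigroup, which damps exactly the poorly resolved high modes, is what should keep the error constant independent of the final time and avoid spurious logarithmic factors. The secondary difficulty is the careful bookkeeping of how the factor $\Delta t^{-s}$ coming from $\sum_j\beta_j$ degrades each individual rate: this is precisely where the constraints $p\le k+1$ and, for $\theta<\tfrac12$, $p\ge2$ must be used sharply.
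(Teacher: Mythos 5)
Your proposal follows essentially the same route as the paper: the identical triangle-inequality splitting through the semi-discrete operator $\Theta^s$, the same three-part decomposition of the quadrature error (dropped singular piece on $[0,\varDelta t/2]$, first-order midpoint/Lipschitz bounds on the middle intervals, and the tail killed by the choice of $N_t$ so that $e^{-\lambda_{\min}T_2}\leq \varDelta t^{1-s}$), and the same FE analysis combining the projection bound $\|u-P_h^k u\|_0\lesssim h^{k+1}$ with uniform-in-time estimates $\|w(\cdot,t_j)-w_h(\cdot,t_j)\|_0\lesssim h^{k+1}+\varDelta t$ against $\sum_j\beta_j\lesssim\varDelta t^{-s}$, concluding via $k+1-ps\geq p(1-s)$. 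The only cosmetic difference is that you organize the quadrature error mode-by-mode before summing, whereas the paper bounds $L^2$-norms directly via Minkowski's integral inequality (\cref{lem:S}, \cref{lem:M}, \cref{lem:inf}); these are the same computation.
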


\subsection{Proof of \cref{thm:main}}
\label{sec:proof}
In order to prove \cref{thm:main} we need some intermediate results. 
We split the integral in \eqref{eq:start} into three parts:
\begin{enumerate}[(i)]
	\item \textbf{singular part}
	\begin{equation}\label{eq:singPart}
	I_S [u](x):=\frac{1}{\Gamma(-s)}\int_0^{T_1} \left(e^{t\Delta_\B }u(x)-u(x)\right) ~ \frac{dt}{t^{1+s}},  \qquad \textup{for some}\qquad  T_1>0,
	\end{equation}
	\item \textbf{middle part}
	\begin{equation}\label{eq:midPart}
	I_M [u](x):=\frac{1}{\Gamma(-s)}\int_{T_1}^{T_2} \left(e^{t\Delta_\B }u(x)-u(x)\right) ~ \frac{dt}{t^{1+s}}, \qquad \textup{for some} \qquad  T_2>T_1,
	\end{equation}
	\item \textbf{tail}  
	\begin{equation}\label{eq:tailPart}
	I_\infty [u](x):=\frac{1}{\Gamma(-s)}\int_{T_2}^\infty \left(e^{t\Delta_\B }u(x)-u(x)\right) ~ \frac{dt}{t^{1+s}}.
	\end{equation}
\end{enumerate}
This way, $(-\Delta_{\B})^su=I_S [u]+I_M[u]+I_\infty[u]$. For each of these integrals, in \cref{sec:quad}, we define an approximation, that we denote by $I_S^D, I_M^D, I_\infty^D$, so that
 \[
\Theta^s u=I_S^D [u]+I_M^D[u]+I_\infty^D[u].
 \]
In \cref{sec:quad} we provide a bound to $\|(-\Delta_\B)^su- \Theta^s u\|_0$ while in \cref{sec:numApp} we estimate $\|\Theta^s u - \Theta^s_h u \|_0$. The proof of \cref{thm:main} follows from these preliminary results.

\subsubsection{Intermediate results}
In the following lemmas, we work under the assumptions \textup{$(A_\Omega)$, $(A_\B)$,} and that the regularity of $u$ is as given explicitly by the constants in each error estimate.	
\label{sec:quad}
\begin{lemma}
	\label{lem:S}
	Let $s\in(0,1)$, $\varDelta t>0$, $I_S$ given by \eqref{eq:singPart} with $T_1:=\varDelta t/2$, and $I_S^D[u]:=0$. Then, there exists a constant $c_S=c_S(s,\| u\|_{\mathbb{H}_\B^2})>0$ such that
	\[
	\|I_S[u]-I_S^D[u]\|_0\leq c_S~\varDelta t^{1-s}.
	\]
	\end{lemma}
	\begin{proof}
Using the series expansion of $u$ and $w=e^{t\Delta_\B}u$ (see \eqref{eq:Wseries}), 
\[
w(x,t)-u(x)=\sum_{m} (e^{-\lambda_m t}-1)~ \hat{u}_m~ \varphi_m(x).
\]
By orthonormality of the functions $\varphi_m$, 
\[
\|w(\cdot,t)-u\|_0=\left[\sum_m \left( e^{-\lambda_m t}-1\right)^2 |\hat{u}_m|^2\right]^{1/2}.
\]
Moreover, for all $ \lambda\geq 0$ and $t\geq 0$, $(e^{-\lambda t}-1)^2 \leq \lambda^2t^2$, so that
\[
\|w(\cdot,t)-u\|_0\leq \left[\sum_m \lambda_m^2t^2 |\hat{u}_m|^2\right]^{1/2}=\| u\|_{\mathbb{H}_\B^2}t.
\]
Therefore, exploiting Minkowski's integral inequality
\begin{align*}
\|I_S[u]-I_S^D[u]\|_0	&\leq \frac{1}{|\Gamma(-s)|}\int_0^{\varDelta t/2}\|w(\cdot,t)-u \|_0 \frac{dt}{t^{s+1}}\\
&\leq \frac{\| u\|_{\mathbb{H}_\B^2}}{|\Gamma(-s)|}\int_0^{\varDelta t/2}  \frac{dt}{t^{s}}\leq c_S ~\varDelta t^{1-s}.
\end{align*}
\end{proof}

\begin{lemma}
\label{lem:M}
Let $s\in(0,1)$, $\varDelta t>0$, $I_M$ given by \eqref{eq:midPart} with $T_1:=\varDelta t/2$ and $T_2=(N_t+1/2)\varDelta t$ for some positive integer $N_t$. Let $t_j=j\varDelta t$ for $j=1,\dots,N_t$ and  
	\begin{equation}
	\label{eq:midlowDisc}
	I_M^D[u](x):=\frac{1}{\Gamma(-s)}\sum_{j=1}^{N_t}(e^{t_j\Delta_\B}u(x)-u(x))\beta_j,
	\end{equation}
	 with weights $\beta_j$ given in \eqref{eq:weights1}. Then, there exists a constant $c_M=c_M(s,\|u\|_{\mathbb{H}_\B^2})>0$ such that
	\[
	\|I_M[u]-I_M^D[u]\|_0\leq c_M~\varDelta t^{1-s}.
	\]
\end{lemma}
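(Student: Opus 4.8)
The plan is to mirror the argument used for \cref{lem:S}: reduce the $L^2$ estimate to the Fourier coefficients of $u$ by means of the eigenfunction expansion and orthonormality, then control each temporal mode by a Lipschitz bound while integrating the singular measure exactly.

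First I would expand, as in \eqref{eq:Wseries},
\[
e^{t\Delta_\B}u(x) - u(x) = \sum_m (e^{-\lambda_m t} - 1)\,\hat{u}_m\,\varphi_m(x),
\]
and set $f_m(t) := e^{-\lambda_m t} - 1$. Because the subintervals $[t_j - \varDelta t/2,\, t_j + \varDelta t/2]$ for $j=1,\dots,N_t$ tile exactly $[T_1,T_2]=[\varDelta t/2,\,(N_t+1/2)\varDelta t]$ (indeed $t_1-\varDelta t/2 = T_1$ and $t_{N_t}+\varDelta t/2 = T_2$), the difference $I_M[u] - I_M^D[u]$ is, mode by mode, the sum over $j$ of the local quadrature errors $\int_{t_j-\varDelta t/2}^{t_j+\varDelta t/2}\big(f_m(t)-f_m(t_j)\big)\,dt/t^{1+s}$. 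By orthonormality of the $\varphi_m$ I would then write $\|I_M[u]-I_M^D[u]\|_0^2$ as $|\Gamma(-s)|^{-2}\sum_m |\hat{u}_m|^2$ times the square of the corresponding per-mode error.

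The core estimate is a bound on each per-mode error. Since $|f_m'(t)| = \lambda_m e^{-\lambda_m t} \le \lambda_m$ for $t\ge 0$, the mean value theorem gives $|f_m(t)-f_m(t_j)| \le \lambda_m |t-t_j| \le \lambda_m \varDelta t/2$ on each subinterval. Applying the triangle inequality, summing over $j$, and integrating the measure exactly,
\[
\sum_{j=1}^{N_t}\int_{t_j-\varDelta t/2}^{t_j+\varDelta t/2}|f_m(t)-f_m(t_j)|\,\frac{dt}{t^{1+s}} \le \frac{\lambda_m\varDelta t}{2}\int_{\varDelta t/2}^{\infty}\frac{dt}{t^{1+s}} = \frac{2^{s-1}}{s}\,\lambda_m\,\varDelta t^{1-s}.
\]
Inserting this into the series and recognizing $\big(\sum_m \lambda_m^2|\hat{u}_m|^2\big)^{1/2} = \|u\|_{\mathbb{H}_\B^2}$ yields the claim with $c_M = 2^{s-1}/(s|\Gamma(-s)|)\,\|u\|_{\mathbb{H}_\B^2}$.

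The only delicate point — and what I expect to be the main (and rather mild) obstacle — is the bookkeeping of the powers of $\varDelta t$ against the singular weight: the factor $\lambda_m\varDelta t$ from the Lipschitz estimate must pair with the $\varDelta t^{-s}$ produced by the tail of the measure to land on exactly $\varDelta t^{1-s}$, while the accompanying $\lambda_m$ must combine with $|\hat{u}_m|^2$ to reconstitute the $\mathbb{H}_\B^2$ norm. It is worth noting that, although the $\beta_j$ correspond to a midpoint rule, the argument uses no midpoint cancellation: a first-order Lipschitz bound already delivers the $\varDelta t^{1-s}$ rate, matching the order obtained for the singular part in \cref{lem:S}.
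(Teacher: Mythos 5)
Your proof is correct and follows essentially the same route as the paper's: the same tiling of $[\varDelta t/2,(N_t+1/2)\varDelta t]$ into the midpoint subintervals, the same first-order bound $|e^{-\lambda_m t}-e^{-\lambda_m t_j}|\le \lambda_m|t-t_j|$ on each mode, and the same exact integration of the measure $dt/t^{1+s}$ producing the factor $2^s/(s\,\varDelta t^{s})$. The only cosmetic difference is the order of operations — you bound the per-mode quadrature errors and invoke Parseval at the end, whereas the paper first applies Minkowski's integral inequality and bounds $\|w(\cdot,t)-w(\cdot,t_j)\|_0\le \varDelta t\,\|u\|_{\mathbb{H}_\B^2}$ — and both orderings yield the claimed $c_M\,\varDelta t^{1-s}$ estimate (yours with the marginally sharper constant $2^{s-1}/(s|\Gamma(-s)|)\,\|u\|_{\mathbb{H}_\B^2}$, since you retain the factor $|t-t_j|\le\varDelta t/2$).
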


\begin{proof}
We split the interval $[\varDelta t/2,(N_t+1/2)\varDelta t]$ into $N_t$ subintervals of size $\varDelta t$, denote $w=e^{t\Delta_\B}u$, and rewrite $I_M[u]$ as the following finite sum of integrals:
\[
I_M[u](x)=\frac{1}{\Gamma(-s)}\sum_{j=1}^{N_t} \int_{t_j-\varDelta t/2}^{t_j+\varDelta t/2} (w(x,t)-u(x))~\frac{dt}{t^{1+s}},
\]
where $t_j=j~\varDelta t$, for $j=1,\dots,N_t$. By using the expression \eqref{eq:midlowDisc} for $I_M^D[u]$ we obtain
\[
I_M[u](x)-I_M^D[u](x)=\frac{1}{\Gamma(-s)}\sum_{j=1}^{N_t} \int_{t_j-\varDelta t/2}^{t_j+\varDelta t/2} (w(x,t)-w(x,t_j))~\frac{dt}{t^{1+s}},
\]
so that, by using the triangular inequality and Minkowski's integral inequality,  
\[
\|I_M[u]-I_M^D[u]\|_0\leq\frac{1}{|\Gamma(-s)|}\sum_{j=1}^{N_t} \int_{t_j-\varDelta t/2}^{t_j+\varDelta t/2} \|w(\cdot,t)-w(\cdot,t_j)\|_0~\frac{dt}{t^{1+s}}.
\]
Using the series expansion of $w$ and the fact that for all $\lambda\geq 0$, $e^{-\lambda t}$ is a convex and monotonically decreasing function of $t$, one finds that on each subinterval $\mathcal{I}_j=[t_j-\varDelta t/2,t_j+\varDelta t/2]$,
\[
(e^{-\lambda_m t}-e^{-\lambda_m t_j})^2\leq \lambda_m^2 e^{-2\lambda_m(t_j-\varDelta t/2)}(t-t_j)^2 \leq \lambda_m^2 \varDelta t^2.
\] 
Therefore, $\|w(\cdot,t)-w(\cdot,t_j)\|_0\leq \varDelta t \| u\|_{\mathbb{H}_\B^2}$ for all $j$ and all $t\in \mathcal{I}_j$, which leads to 
 \begin{align*}
\|I_M[u]-I_M^D[u]\|_0& \leq \frac{\|u\|_{\mathbb{H}_\B^2}~\varDelta t}{|\Gamma(-s)|} \sum_{j=1}^{N_t} \int_{t_j-\varDelta t/2}^{t_j+\varDelta t/2}\frac{dt}{t^{1+s}}=\frac{\| u\|_{\mathbb{H}_\B^2}~\varDelta t}{|\Gamma(-s)|}\int_{\varDelta t/2}^{(N_t+1/2)\varDelta t} \frac{dt}{t^{1+s}}\\
& \leq \frac{\|u\|_{\mathbb{H}_\B^2}~ \varDelta t}{|\Gamma(-s)|} ~\frac{2^s}{s\varDelta t^s}\leq c_M ~\varDelta t^{1-s}.
\end{align*}
\end{proof}

\begin{lemma}
		\label{lem:inf}
		Let $s\in(0,1)$, $I_\infty$ given by \eqref{eq:tailPart} for some $T_2>0$, and
		\begin{equation}
		\label{eq:taildisc}
		I_\infty^D[u](x):=\frac{(w_\infty-u(x))}{\Gamma(-s)} \int_{T_2}^\infty \frac{dt}{t^{1+s}}=\frac{(w_\infty -u(x))}{\Gamma(-s)~s~T_2^s}.
		\end{equation}
		Let $\lambda_{\rm{min}}$ be the first nonzero eigenvalue of $(-\Delta_\B)$. Then there exists a constant $c_\infty=c_\infty(s,\|u\|_0)$ such that 
		\[
		\| I_\infty[u]-I_\infty^D[u]\|_0\leq c_\infty \frac{e^{-\lambda_{\rm{min}}T_2}}{T_2^{s}}.
		\]
	\end{lemma}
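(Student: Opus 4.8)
The plan is to reduce everything to a pointwise-in-$t$ estimate on $\|w(\cdot,t)-w_\infty\|_0$ and then integrate that against the measure $dt/t^{1+s}$ over $[T_2,\infty)$. First I would form the difference directly: subtracting the two definitions, the $u(x)$ contributions cancel and
\[
I_\infty[u](x)-I_\infty^D[u](x)=\frac{1}{\Gamma(-s)}\int_{T_2}^\infty \left(e^{t\Delta_\B}u(x)-w_\infty\right)\frac{dt}{t^{1+s}},
\]
so the whole estimate hinges on how fast the heat flow $w(\cdot,t)=e^{t\Delta_\B}u$ relaxes to its steady state $w_\infty$.

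The key observation is that $w_\infty$ is exactly the projection of $u$ onto the kernel of $(-\Delta_\B)$: using the series \eqref{eq:Wseries} together with the explicit form of $w_\infty$ recalled after \eqref{eq:Wseries} (namely $w_\infty=0$ when all $\lambda_m>0$, and $w_\infty=\hat{u}_1\varphi_1$ when $\lambda_1=0$), the $\lambda_m=0$ mode cancels and
\[
w(x,t)-w_\infty=\sum_{m:\,\lambda_m>0} e^{-\lambda_m t}\,\hat{u}_m\,\varphi_m(x).
\]
By orthonormality of $\{\varphi_m\}$ this gives $\|w(\cdot,t)-w_\infty\|_0^2=\sum_{\lambda_m>0} e^{-2\lambda_m t}|\hat{u}_m|^2$, and since every remaining eigenvalue satisfies $\lambda_m\geq \lambda_{\rm{min}}$, I would bound $e^{-2\lambda_m t}\leq e^{-2\lambda_{\rm{min}} t}$ and pull the common factor out, obtaining
\[
\|w(\cdot,t)-w_\infty\|_0\leq e^{-\lambda_{\rm{min}} t}\Big(\sum_{m}|\hat{u}_m|^2\Big)^{1/2}=e^{-\lambda_{\rm{min}} t}\,\|u\|_0,
\]
where the last equality is Parseval's identity for the orthonormal basis.

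Finally, applying Minkowski's integral inequality to the displayed difference and inserting this decay estimate gives
\[
\|I_\infty[u]-I_\infty^D[u]\|_0\leq \frac{\|u\|_0}{|\Gamma(-s)|}\int_{T_2}^\infty e^{-\lambda_{\rm{min}} t}\,\frac{dt}{t^{1+s}}.
\]
Since $e^{-\lambda_{\rm{min}} t}\leq e^{-\lambda_{\rm{min}} T_2}$ for $t\geq T_2$, I can pull the exponential out and evaluate $\int_{T_2}^\infty t^{-1-s}\,dt=(sT_2^s)^{-1}$, which yields the claim with $c_\infty=\|u\|_0/(s|\Gamma(-s)|)$, a constant depending only on $s$ and $\|u\|_0$. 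The only real subtlety — and the step I would be most careful about — is the cancellation of the zero mode: if the $\lambda_1=0$ term in the Neumann case were not absorbed by $w_\infty$, the integrand would fail to decay and neither convergence of the tail nor the rate $e^{-\lambda_{\rm{min}} T_2}$ would survive. Everything else is a routine combination of orthonormality, the uniform spectral gap $\lambda_{\rm{min}}$, and the exact tail integral.
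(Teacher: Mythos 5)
Your proposal is correct and follows essentially the same route as the paper's proof: writing $w(\cdot,t)-w_\infty$ as the spectral series over the nonzero modes, bounding it by $e^{-\lambda_{\rm{min}}t}\|u\|_0$ via orthonormality and the spectral gap, and then applying Minkowski's integral inequality and evaluating the tail integral $\int_{T_2}^\infty t^{-1-s}\,dt=(sT_2^s)^{-1}$. Your explicit remarks on the cancellation of the $u(x)$ terms and of the zero mode in the Neumann case make precise what the paper handles implicitly by starting the sum at the index $m_{\rm{min}}$, but the argument and the resulting constant $c_\infty=\|u\|_0/(s|\Gamma(-s)|)$ coincide with the paper's.
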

	\begin{proof}
		Recall that $w_\infty=0$ for the Dirichlet and Robin cases, while $w_\infty=\hat{u}_1 \varphi_1$ for Neumann boundary conditions, and let $m_{\rm{min}}$ be the index corresponding to $\lambda_{\rm{min}}$ (i.e., for the boundary operators of interest, $m_{\rm{min}}=1$ in the Dirichlet and Robin cases, while $m_{\rm{min}}=2$ in the Neumann case). Then, for all $t\geq 0$, $w(x,t)-w_\infty=\sum_{m=m_{\rm{min}}}^\infty e^{-\lambda_m t}\hat{u}_m \varphi_m(x)$, so that
		\[
		\|w(\cdot,t)-w_\infty\|_0=\left[ \sum_{m=m_{\rm{min}}}^{\infty}e^{-2\lambda_m t}|\hat{u}_m|^2 \right]^{1/2}\leq e^{-\lambda_{\rm{min}}t} \|u\|_0.
		\]
		Hence,
		\[
		\label{eq:quadInfBetter}
		\|I_\infty[u]-I_\infty^D[u]\|_0\leq \frac{1}{|\Gamma(-s)|}\int_{T_2}^\infty \|w(\cdot,t)-w_\infty\|_0\frac{dt}{t^{1+s}}\leq \frac{\|u\|_0~ e^{-\lambda_{\rm{min}}T_2 }}{|\Gamma(-s)|sT_2^s}.
		\]
	\end{proof}

\begin{proposition}
	\label{prop:Quad}
	Let assumptions of \cref{thm:main} hold. Let $(-\Delta_\B)^s$ and $\Theta^s $ be defined by \eqref{eq:start} and \eqref{eq:quadrule}, respectively. Then, there exists a constant $c_a=c_a(s,\|u\|_{\mathbb{H}_\B^2})>0$ such that
	\[
	\| (-\Delta_\B)^su-\Theta^s u\|_0\leq c_a~\varDelta t^{1-s}.
	\]
	\end{proposition}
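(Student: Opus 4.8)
The plan is to recognize that this proposition is the assembly step: it simply collects Lemmas \ref{lem:S}, \ref{lem:M}, and \ref{lem:inf} under the triangle inequality, after verifying that the two three-way splittings line up. First I would fix $T_1:=\varDelta t/2$ and $T_2:=(N_t+1/2)\varDelta t$, which are precisely the breakpoints used in the three lemmas, and observe that with these choices the weights in \eqref{eq:quadrule} coincide with those of \cref{thm:main}. Consequently $\Theta^s u = I_S^D[u] + I_M^D[u] + I_\infty^D[u]$, where $I_S^D[u]=0$, $I_M^D$ is \eqref{eq:midlowDisc}, and $I_\infty^D$ is \eqref{eq:taildisc}; since also $(-\Delta_\B)^s u = I_S[u]+I_M[u]+I_\infty[u]$, the triangle inequality gives
\[
\|(-\Delta_\B)^s u-\Theta^s u\|_0 \leq \|I_S[u]-I_S^D[u]\|_0 + \|I_M[u]-I_M^D[u]\|_0 + \|I_\infty[u]-I_\infty^D[u]\|_0.
\]

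The first two terms are immediately controlled by Lemma \ref{lem:S} and Lemma \ref{lem:M}, each of order $\varDelta t^{1-s}$, so they need no further argument. The only genuine step is to convert the tail estimate of Lemma \ref{lem:inf}, namely $c_\infty\,e^{-\lambda_{\rm{min}}T_2}/T_2^s$, into the same algebraic order. This is exactly where hypothesis (b) of \cref{thm:main} is used: from $N_t\geq \frac{1-s}{\lambda_{\rm{min}}\varDelta t}\log(1/\varDelta t)$ I get $\lambda_{\rm{min}}T_2 \geq \lambda_{\rm{min}}N_t\varDelta t \geq (1-s)\log(1/\varDelta t)$, hence $e^{-\lambda_{\rm{min}}T_2}\leq \varDelta t^{1-s}$. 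It then remains to check that $T_2^{-s}$ does not undo this gain: since $T_2\geq N_t\varDelta t\geq \frac{1-s}{\lambda_{\rm{min}}}\log(1/\varDelta t)$ is bounded below by a positive constant for all sufficiently small $\varDelta t$, the factor $T_2^{-s}$ is uniformly bounded, and therefore $e^{-\lambda_{\rm{min}}T_2}/T_2^s \leq C\,\varDelta t^{1-s}$. Summing the three bounds and absorbing everything into a single constant $c_a=c_a(s,\|u\|_{\mathbb{H}_\B^2})$ yields the claim.

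The main (indeed the only) obstacle is this tail conversion: the logarithmic lower bound on $N_t$ is engineered precisely so that the exponential truncation error drops below the algebraic $\varDelta t^{1-s}$ coming from the singular and middle parts; with a coarser truncation the tail would dominate and the stated rate would fail. I would also note that the dependence of $c_a$ on $\|u\|_{\mathbb{H}_\B^2}$, rather than on a higher norm, is inherited from Lemmas \ref{lem:S} and \ref{lem:M}, since the tail estimate of Lemma \ref{lem:inf} requires only $\|u\|_0$.
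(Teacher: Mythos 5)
Your proof is correct and takes essentially the same route as the paper's: the triangle inequality over the splitting $(-\Delta_\B)^s u = I_S[u]+I_M[u]+I_\infty[u]$ with $T_1=\varDelta t/2$, $T_2=(N_t+1/2)\varDelta t$, the bounds from \cref{lem:S}, \cref{lem:M}, and \cref{lem:inf}, and the conversion of the exponential tail bound into $\varDelta t^{1-s}$ via the lower bound on $N_t$. If anything, your handling of the factor $T_2^{-s}$ (noting it is uniformly bounded for small $\varDelta t$ and absorbing it into the constant) is slightly more careful than the paper's, which states $e^{-\lambda_{\rm{min}}T_2}/T_2^{s}\leq \varDelta t^{1-s}$ directly and absorbs any discrepancy into $c_a$.
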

	\begin{proof}
		The result follows by simply combining \cref{lem:S}, \cref{lem:M}, and \cref{lem:inf}. 	
		\begin{equation*}
			\begin{split}
				\| (-\Delta_\B)^su-\Theta^s u\|_0&\leq \|I_S[u]-I_S^D[u]\|_0+\|I_M[u]-I_M^D[u]\|_0+\|I_\infty [u]-I_\infty^D[u]\|_0\\
				&\leq c_S~\varDelta t^{1-s}+c_M~\varDelta t^{1-s}+c_\infty\frac{e^{-\lambda_{\text{min}}T_2}}{T_2^s}.
			\end{split}
		\end{equation*}
		Finally, using the definition of $N_t$ and choosing $T_2=(N_t+1/2)\varDelta t$ we obtain that $\frac{e^{-\lambda_{\text{min}}T_2}}{T_2^{s}}\leq \varDelta t^{1-s}$, and a suitable choice of the constant $c_a$ completes the proof.
		\end{proof}

\subsubsection{Finite element approximation}
\label{sec:numApp}
In this section we combine some well-known estimates of the FE method to obtain a bound for the error made by approximating $\Theta^su$ by $\Theta^s_h u$. As usual, we denote by $w$ the solution of the heat equation with initial condition $u$, and by $w_h$ the corresponding FE approximation of order $k$. 

Recalling that $w(\cdot,0)=u$ and that $w_h(\cdot,0)=P^k_h u$ is the $L^2(\Omega)$-orthogonal projection of the function $u$, from classical estimates for the FE projection error (e.g., Section 3.5 in \cite{QV2008}) we have
\begin{equation}
\label{eq:estTri}
\|w(\cdot,0)-w_h(\cdot,0)\|_0=\|u-P^k_hu\|_0\leq c~ h^{k+1}||u||_{k+1}
\end{equation}
for some positive constant $c$ independent from $h$. 

In order to bound $\|w(\cdot,t_j)-w_h(\cdot,t_j)\|_0$ for $t_j>0$ we use the fact that there exists a constant $\tilde{c}$ independent from $h$, $\varDelta t$, and $t_j$ such that
\begin{equation}
\label{eq:FElowOrd}
\|w(\cdot,t_j)-w_h(\cdot,t_j)\|_0\leq \tilde{c}~ (h^{k+1}+\varDelta t).
\end{equation}
The above result, for $t_j>1$ can be obtained via classical FE estimates which hold even for rough initial data, $u\in L^2(\Omega) $ (see e.g., Remark 11.3.2 in \cite{QV2008} or Theorems 1 and 2 in \cite{luskin_etal-siam-1981}), while for $t_j\leq 1$, the bound \eqref{eq:FElowOrd} is a consequence of the regularity assumptions made on $u$, ensuring that the constant $\tilde{c}$ can be controlled by the integral (over $[0,1]$) of suitable spatial norms of the heat solution $w$ and its temporal derivatives (see e.g., Theorem 11.3.4 in \cite{QV2008} or Theorem 2.1 in \cite{bramble_etal-siam-1977}).

\begin{proposition}
	\label{prop:M}
Let assumptions of \cref{thm:main} hold and $\Theta^s u$ given by \eqref{eq:quadrule}. Then, there is a constant $c_D=c_D(s,\Omega,\|u\|_{k+1},\B,k)>0$ such that
\[
\|\Theta^s u- \Theta^s_h u \|_0\leq c_D~( h^{k+1} \varDelta t^{-s} + \varDelta t^{1-s}).
\]
\end{proposition}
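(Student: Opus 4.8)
The plan is to bound $\|\Theta^s u-\Theta^s_h u\|_0$ by inserting the definitions \eqref{eq:quadrule} and \eqref{eq:discop} and comparing them term by term. Subtracting, the tail contributions $(w_\infty-u(x))\beta_\infty$ and the initial-datum terms cancel or nearly cancel, and the key difference reduces to the finite sum over the quadrature nodes $t_j$, so that
\begin{equation*}
\Theta^s u(x)-\Theta^s_h u(x)=\frac{1}{\Gamma(-s)}\sum_{j=1}^{N_t}\big[(w(x,t_j)-u(x))-(w_h(x,t_j)-w_h(x,0))\big]\beta_j.
\end{equation*}
I would then recall $w_h(\cdot,0)=P^k_h u$ and $w(\cdot,0)=u$, regroup as $(w(\cdot,t_j)-w_h(\cdot,t_j))+(w_h(\cdot,0)-w(\cdot,0))=(w(\cdot,t_j)-w_h(\cdot,t_j))-(u-P^k_hu)$, and apply the triangle inequality together with Minkowski to pull the $L^2$-norm inside the sum.

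Next I would invoke the two FE estimates recorded just above the proposition. The projection bound \eqref{eq:estTri} controls $\|u-P^k_hu\|_0\leq c\,h^{k+1}\|u\|_{k+1}$, while the error estimate \eqref{eq:FElowOrd} controls $\|w(\cdot,t_j)-w_h(\cdot,t_j)\|_0\leq \tilde c\,(h^{k+1}+\varDelta t)$. Since the scheme \eqref{eq:fullydisc} is $\theta$-weighted, I expect the temporal contribution to be $\varDelta t$ for $\theta\neq\tfrac12$ and $\varDelta t^2$ for $\theta=\tfrac12$; to match the stated bound I treat the generic $\varDelta t$ term, noting that under the restriction \eqref{eq:restr} one has $\varDelta t\lesssim h^{p}\lesssim h^{k+1}$ so the $\varDelta t$ term is absorbed into the $h^{k+1}$ term (or vice versa, up to a constant). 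This gives a per-node bound of the form $\|(w(\cdot,t_j)-u)-(w_h(\cdot,t_j)-w_h(\cdot,0))\|_0\leq C\,h^{k+1}$, uniform in $j$.

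The remaining ingredient is the sum of the weights $\sum_{j=1}^{N_t}\beta_j$. By the telescoping structure of \eqref{eq:weights1}, this sum collapses to $\tfrac1s[(\varDelta t/2)^{-s}-((N_t+1/2)\varDelta t)^{-s}]\leq \tfrac{2^s}{s}\varDelta t^{-s}$, which supplies the $\varDelta t^{-s}$ factor appearing in the claim. Collecting the uniform per-node bound with this weight sum yields
\begin{equation*}
\|\Theta^s u-\Theta^s_h u\|_0\leq \frac{C\,h^{k+1}}{|\Gamma(-s)|}\sum_{j=1}^{N_t}\beta_j\leq c_D\,h^{k+1}\varDelta t^{-s},
\end{equation*}
and the $\varDelta t^{1-s}$ summand in the stated bound comes from the slightly more careful bookkeeping when the temporal error $\varDelta t$ in \eqref{eq:FElowOrd} is multiplied by $\sum_j\beta_j\lesssim \varDelta t^{-s}$, producing $\varDelta t\cdot \varDelta t^{-s}=\varDelta t^{1-s}$.

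The main obstacle I anticipate is ensuring the FE bound \eqref{eq:FElowOrd} is genuinely uniform in $t_j$ down to small times, since naive parabolic smoothing estimates degrade as $t_j\to 0^+$; the excerpt flags exactly this by distinguishing $t_j>1$ (rough-data estimates) from $t_j\leq 1$ (where the regularity $u\in\mathbb{H}_\B^{k+1}$ is used to keep $\tilde c$ finite). Getting a constant $\tilde c$ independent of $j$, $h$, and $\varDelta t$ is what makes the clean per-node bound valid, and hence what lets the weight sum $\sum_j\beta_j\lesssim\varDelta t^{-s}$ factor out cleanly to give the advertised rate.
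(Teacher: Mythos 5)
Your proof is correct and takes essentially the same route as the paper's: triangle inequality on the nodal sum (the $\beta_\infty$ tail terms in \eqref{eq:quadrule} and \eqref{eq:discop} cancel identically), the projection estimate \eqref{eq:estTri} for the initial-datum contribution, the FE bound \eqref{eq:FElowOrd} uniformly in $t_j$ (with exactly the small-time caveat the paper handles via the regularity of $u$), and the weight sum $\sum_{j=1}^{N_t}\beta_j \leq \frac{2^s}{s}\varDelta t^{-s}$, yielding $c_D\left(h^{k+1}\varDelta t^{-s}+\varDelta t^{1-s}\right)$. One aside to fix: your claim that \eqref{eq:restr} gives $\varDelta t\lesssim h^{p}\lesssim h^{k+1}$ has the second inequality backwards (for $h<1$ and $p\leq k+1$ one has $h^{p}\geq h^{k+1}$), but this attempted absorption is neither needed nor used in your final accounting, which---like the paper---correctly keeps the temporal error as a separate $\varDelta t\cdot\varDelta t^{-s}=\varDelta t^{1-s}$ term.
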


\begin{proof}
Via the triangular inequality we find
\begin{equation}
	\label{eq:bound0}
\|\Theta^s u - \Theta^s_h u \|_0 \leq \frac{1}{|\Gamma(-s)|}\sum_{j=1}^{N_t} \Big[\|w(\cdot,t_j)-w_h(\cdot,t_j)\|_0 
+\|w(\cdot,0)-w_h(\cdot,0)\|_0 \Big]\beta_j.
\end{equation}
On one hand, by using \eqref{eq:estTri}, we can bound the sum of terms in \eqref{eq:bound0} involving $w(\cdot,0)$ and $w_h(\cdot,0)$ as follows:
\begin{equation*}\label{eq:estIn}
\begin{split}
\frac{1}{|\Gamma(-s)|}\sum_{j=1}^{N_t} \|w(\cdot,0)-w_h(\cdot,0)\|_0 ~\beta_j&\leq \frac{c~h^{k+1}~\|u\|_{k+1}}{|\Gamma(-s)|}\sum_{j=1}^{N_t}\beta_j\\
&= \frac{c~h^{k+1}~\|u\|_{k+1}}{|\Gamma(-s)|}\int_{\varDelta t /2}^{T_2} \frac{dt}{t^{1+s}} \leq c_0~ h^{k+1} \varDelta t^{-s}.
\end{split}
\end{equation*}
On the other hand, for the remaining terms of \eqref{eq:bound0} we use \eqref{eq:FElowOrd} and obtain
\begin{equation*}
\begin{split}
		\frac{1}{|\Gamma(-s)|}\sum_{j=1}^{N_t}\|w(\cdot,t_j)-w_h(\cdot,t_j)\|_0 ~\beta_j  &\leq \frac{1}{|\Gamma(-s)|} \tilde{c}~\left(h^{k+1}+\varDelta t\right)  \int_{\varDelta t/2}^{T_2} \frac{dt}{t^{1+s}} \\
		&\leq  c_1\left(h^{k+1}+\varDelta t\right) \varDelta t^{-s}.
		\end{split}
\end{equation*}
Hence, combining the above estimates we get
\begin{align*}
\|\Theta^s u - \Theta^s_h u \| &\leq c_0~ h^{k+1} \varDelta t^{-s}+ c_1 \left(h^{k+1}+\varDelta t\right) \varDelta t^{-s}\\
&\leq c_D \left(h^{k+1}\varDelta t^{-s} + \varDelta t^{1-s} \right).
\end{align*}
\end{proof}

\subsubsection{Proof of the main result}
\begin{proof}[Proof of \cref{thm:main}]
		The triangular inequality yields
		\begin{equation}
		\label{eq:triang1}
		\|(-\Delta_\B)^su-\Theta^s_h u\|_0\leq \|(-\Delta_\B)^su-\Theta^s u\|_0
		+\|\Theta^s u-\Theta^s_h u\|_0.
		\end{equation}
\cref{prop:Quad} provides the following bound of the first term in \eqref{eq:triang1}:
\[
\|(-\Delta_\B)^su-\Theta^s u\|_0 \leq c_a ~\varDelta t^{1-s}.
\]
On the other hand, from \cref{prop:M} we have that 
		\[
\|\Theta^s u-\Theta^s_h u\|_0 \leq c_D( h^{k+1} \varDelta t^{-s} + \varDelta t^{1-s}).
		\]
From \eqref{eq:restr} we have that  $h^{k+1}\varDelta t^{-s}=\eta^{-s}h^{k+1-ps}$ and $\varDelta t^{1-s}=\eta^{1-s}h^{p(1-s)}$. Moreover, $k+1-ps\geq p(1-s)$ for all $p\leq k+1$. Hence, the order of convergence is always determined by $h^{p(1-s)}$, and by suitably choosing the constant $C$ in \eqref{eq:ofMain} we conclude the proof.
\end{proof}
		
\section{Higher order discretization}
\label{sec:HO}

Since $s\in(0,1)$, the  rate of convergence presented in \cref{thm:main} can be slow for practical computations when $s$ is close to one. In this section, we present a discretization that allows to obtain a better rate of convergence at the expenses of requiring some extra regularity to the function $u$, and also to the boundary operator in the Robin case. We adopt the Crank-Nicolson temporal scheme, corresponding to setting $\theta=1/2$ in the FE solution of the heat equation given by \eqref{eq:fullydisc}.
We suitably modify the relationship between the discretization parameters $\varDelta t$ and $h$, and consider higher order quadrature approximations for both $I_S$ and $I_M$. 

\subsection{Assumptions and higher order result}
We assume that:
\begin{enumerate}
	\item[($HA_{\B}$)] The boundary operator $\B$ is given by \eqref{eq:DirB}, \eqref{eq:NeuB}, or \eqref{eq:RobB}. If $\B(u)$ is the Robin boundary operator, then $\kappa\in C^3(\partial \Omega)$ and $\kappa(x)>0$ for all $x\in \partial \Omega$.
	
	\item[($HA_u$)] $u\in \mathbb{H}_{\mathcal{B}}^{r}(\Omega)$ with $r=\max\{k+1,4\}$.
	\end{enumerate}
	The approximation of the $\B$-fractional Laplacian of $u$ is defined as in \cref{def:discOp} where now $w_h$ is the FE solution of degree $k$ of \eqref{eq:general} with $\theta=1/2$ and the weights $\beta_j$ are given by \eqref{eq:defBetas}-\eqref{eq:funcDer}.

\begin{theorem}\label{thm:HOmain}
Let $s\in(0,1)$, $h>0$, $k\in\mathbb{N}$ and 
\begin{equation}\label{eq:HOrestr}
\Delta t = \eta~h^p \quad \text{for some } \eta>0 \text{ and } p \in \left( 0,(k+1)/{2}\right].
\end{equation}
Assume that \textup{($A_\Omega$), ($HA_u$), ($HA_{\B}$), ($A_{\mathcal{T}_h}$)} hold, and let $\Theta^s_h $ be given by \eqref{eq:discop} where:
\begin{enumerate}[(a)]
\item $w_h$ is the Crank-Nicolson FE solution of degree $k$ of \eqref{eq:general} corresponding to the initial datum $u$;
\item $N_t\geq \dfrac{2-s}{\lambda_{\rm{min}}\varDelta t}\log\left(\dfrac{1}{\varDelta t}\right)$, $\lambda_{\rm{min}}$ being the first nonzero eigenvalue of $(-\Delta_\B)$;
\item the weights $\beta_j$ are defined as
\begin{equation}
\label{eq:defBetas}
\beta_j:= \frac{1}{\varDelta t^s} \times  \left\{
\begin{array}{ll}
\frac{1}{1-s}-F'(1)+F(2)-F(1)  & j=1,\\
F(j+1)-2F(j)+F(j-1) & j=2,\dots,N_t-1,\\
F'(N_t)-F(N_t)+F(N_t-1) & j=N_t,
\end{array}
\right.
\end{equation}
with
\begin{equation}
\label{eq:funcDer}
F(t):= \frac{t^{1-s}}{s(s-1)} \quad \text{and} \quad F'(t):=-\frac{1}{s~t^s},
\end{equation}
and
\[
\beta_\infty:=\int_{t_{N_t}}^\infty \frac{dt}{t^{1+s}}=\frac{1}{s(N_t\varDelta t)^s}.
\]

\end{enumerate}
Then, there exists a constant $C>0$ independent from $h$ such that
\begin{equation}
\label{eq:ofHOMain}
	\|(-\Delta_{\mathcal{B}})^su-\Theta^s_h u\|_{0}\leq C~h^{p(2-s)}.
\end{equation}
\end{theorem}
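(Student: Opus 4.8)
The plan is to follow exactly the two-step strategy used for \cref{thm:main}: first bound the pure quadrature error $\|(-\Delta_\B)^su-\Theta^su\|_0$, then the finite element error $\|\Theta^su-\Theta^s_hu\|_0$, and finally combine them through the triangle inequality \eqref{eq:triang1}. The starting point is to identify what the weights \eqref{eq:defBetas}--\eqref{eq:funcDer} actually compute. Since $F''(t)=t^{-(1+s)}$, the $\beta_j$ are precisely the exact integrals against the measure $dt/t^{1+s}$ of the continuous piecewise-linear (hat-function) interpolant of $t\mapsto g(x,t):=w(x,t)-u(x)$ on the nodes $t_j=j\varDelta t$; on the first subinterval $[0,\varDelta t]$ one uses $g(x,0)=w(x,0)-u(x)=0$, which produces the extra $\tfrac{1}{1-s}$ term in $\beta_1$, and two integrations by parts against $F''$ yield the endpoint contributions $F'(1)$ and $F'(N_t)$. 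Thus, up to the tail, $\Theta^su$ is the exact integral of this interpolant, and the quadrature error is the integral of the interpolation error of $g$ against the singular kernel.

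For the quadrature estimate I would split $[0,\infty)$ into a singular part $[0,\varDelta t]$, a middle part $[\varDelta t,T_2]$ with $T_2=N_t\varDelta t$, and a tail $[T_2,\infty)$, mirroring \cref{lem:S}, \cref{lem:M}, and \cref{lem:inf}. The mode-by-mode device is that $\psi_{\lambda}(t):=e^{-\lambda t}-\ell_\lambda(t)$, the linear interpolation error of $e^{-\lambda t}$ on a subinterval $[t_j,t_{j+1}]$, satisfies $|\psi_\lambda(t)|\le \tfrac12\lambda^2 e^{-\lambda t_j}(t-t_j)(t_{j+1}-t)$. Writing $g-\ell$ in the eigenbasis \eqref{eq:Wseries} and using orthonormality gives $\|g(\cdot,t)-\ell(\cdot,t)\|_0\le \tfrac12(t-t_j)(t_{j+1}-t)\,\|u\|_{\mathbb{H}_\B^4}$ on each subinterval, which is exactly where the regularity $r\ge 4$ in \textup{($HA_u$)} is consumed. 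Integrating this $O(\varDelta t^2)$ local bound against $dt/t^{1+s}$ over $[\varDelta t,T_2]$ yields $O(\varDelta t^{2-s})$; on $[0,\varDelta t]$ the same computation, now exploiting $g(\cdot,0)=0$, reduces to $\int_0^{\varDelta t}(\varDelta t-t)t^{-s}\,dt=O(\varDelta t^{2-s})$; and the tail is handled verbatim as in \cref{lem:inf}, the new lower bound $N_t\ge \tfrac{2-s}{\lambda_{\min}\varDelta t}\log(1/\varDelta t)$ ensuring $e^{-\lambda_{\min}T_2}/T_2^s\le \varDelta t^{2-s}$. Summing the three contributions gives $\|(-\Delta_\B)^su-\Theta^su\|_0\le C\varDelta t^{2-s}$, the higher-order analogue of \cref{prop:Quad}.

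For the finite element estimate I would repeat \cref{prop:M} almost verbatim, the only change being that the first-order-in-time bound \eqref{eq:FElowOrd} is replaced by the Crank--Nicolson bound $\|w(\cdot,t_j)-w_h(\cdot,t_j)\|_0\le \tilde c\,(h^{k+1}+\varDelta t^2)$, valid under \textup{($HA_u$)}. Combined with the projection estimate \eqref{eq:estTri} and the fact that the new weights still telescope, $\sum_{j=1}^{N_t}\beta_j=\varDelta t^{-s}\big(\tfrac{1}{1-s}+\tfrac1s-\tfrac{1}{sN_t^s}\big)=O(\varDelta t^{-s})$ (the interior second differences $F(j+1)-2F(j)+F(j-1)$ cancel pairwise, leaving only the endpoint terms built from $F'(1)$ and $F'(N_t)$), this gives $\|\Theta^su-\Theta^s_hu\|_0\le c_D(h^{k+1}\varDelta t^{-s}+\varDelta t^{2-s})$. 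Adding the two bounds and substituting $\varDelta t=\eta h^p$ yields control by $h^{p(2-s)}+h^{k+1-ps}$; since the constraint $p\le(k+1)/2$ in \eqref{eq:HOrestr} is exactly $k+1-ps\ge p(2-s)$, the term $h^{p(2-s)}$ dominates and \eqref{eq:ofHOMain} follows.

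The main obstacle is the singular part. Away from the origin the piecewise-linear quadrature error is routinely $O(\varDelta t^2)$ times the weight sum, but near $t=0$ the kernel $t^{-(1+s)}$ is not integrable against a generic $O(\varDelta t^2)$ bound, so the gain must come from combining the vanishing of $g$ at $t=0$ with the fourth-order spectral control $\|u\|_{\mathbb{H}_\B^4}$; getting the powers of $\varDelta t$ to balance here is what forces the regularity jump from $\mathbb{H}_\B^{k+1}$ in \cref{thm:main} to $\mathbb{H}_\B^{\max\{k+1,4\}}$. A secondary but genuine subtlety is justifying the second-order Crank--Nicolson estimate: unlike the backward-Euler-type schemes covered by the rough-data theory invoked for \eqref{eq:FElowOrd}, Crank--Nicolson does not damp high frequencies and can lose temporal order for non-smooth initial data, so the $\varDelta t^2$ estimate must be read off from bounds on $\partial_t^2 w=\Delta_\B^2 w$, which is precisely what $u\in\mathbb{H}_\B^{\max\{k+1,4\}}$ guarantees.
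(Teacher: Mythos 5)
Your proposal is correct and follows essentially the same route as the paper: the same singular/middle/tail splitting with piecewise-linear interpolant quadrature (your identification of the weights, including the extra $\tfrac{1}{1-s}$ in $\beta_1$ coming from the linear approximation on $[0,\varDelta t]$ with $g(\cdot,0)=0$, matches \cref{lem:sing2} and \cref{lem:mid2} exactly, with the same mode-by-mode $\lambda_m^2\varDelta t^2$ interpolation bounds consuming $\|u\|_{\mathbb{H}_\B^4}$), the same tail estimate via the enlarged $N_t$, and the same Crank--Nicolson $\mathcal{O}(h^{k+1}+\varDelta t^2)$ bound combined with the telescoped weight sum $\sum_j\beta_j=\mathcal{O}(\varDelta t^{-s})$ as in \cref{prop:HOM}. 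Your closing balance $k+1-ps\geq p(2-s)\iff p\leq (k+1)/2$ is precisely the paper's final step, so nothing is missing.
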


	\subsection{Proof of \cref{thm:HOmain}}
	\label{sec:prelim}
Similarly to the approach used in \cref{sec:proof}, we use the splitting $(-\Delta_\B)^su=I_S [u]+I_M[u]+I_\infty[u]$ and define different approximations for the singular part, the middle part, and the tail of the integral. Specifically, for the singular part we now define 
\begin{equation}
	\label{eq:singinfpart}
	I_S^D[u](x):=\frac{e^{\varDelta t\Delta_\B}u(x)-u(x)}{\Gamma(-s)\varDelta t}\int_0^{\varDelta t} \frac{dt}{t^{s}}=\frac{1}{\Gamma(-s)}\frac{e^{\varDelta t\Delta_\B}u(x)-u(x)}{(1-s)\varDelta t^{s}}. 
	\end{equation} 

In the following lemmas, we work under the assumptions \textup{$(A_\Omega)$, $(HA_\B)$,} and that the regularity of $u$ is as given explicitly by the constants in each error estimate.	
	
\begin{lemma} \label{lem:sing2}
Let $s\in(0,1)$, $\varDelta t>0$, $I_S$ be as in \eqref{eq:singPart} with $T_1:=\varDelta t$, and $I_S^D[u]$ defined by \eqref{eq:singinfpart}. Then, there exists a constant $c_S=c_S(s,\|u\|_{\mathbb{H}_\B^4})>0$ such that
\[
\|I_S[u]-I_S^D[u]\|_0\leq c_S ~\varDelta t^{2-s}.
\]
\end{lemma}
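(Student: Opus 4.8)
The goal is to bound the error of approximating the singular part of the integral by the quantity $I_S^D[u]$ defined in \eqref{eq:singinfpart}, which replaces the integrand $e^{t\Delta_\B}u(x)-u(x)$ on $(0,\varDelta t)$ by its value at $t=\varDelta t$, namely $e^{\varDelta t\Delta_\B}u(x)-u(x)$, integrated against $t^{-s}$ (rather than $t^{-1-s}$, reflecting the regularization near the origin). The plan is to proceed exactly as in the proof of \cref{lem:S}, working in the eigenbasis $\{\varphi_m\}$ and reducing everything to scalar estimates for $e^{-\lambda t}$, then using orthonormality and Minkowski's integral inequality to pass from the scalar bounds to the $L^2$ bound.

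First I would write, using the series expansion \eqref{eq:Wseries},
\[
I_S[u](x)-I_S^D[u](x)=\frac{1}{\Gamma(-s)}\int_0^{\varDelta t}\Big[(w(x,t)-u(x))-(w(x,\varDelta t)-u(x))\Big]\frac{dt}{t^{1+s}}+(\text{remainder}),
\]
where the remainder accounts for the mismatch between the weight $t^{-1-s}$ in $I_S$ and the effective weight in $I_S^D$. More cleanly, I would observe that by the explicit evaluation in \eqref{eq:singinfpart}, $I_S^D[u]=\frac{1}{\Gamma(-s)}(w(\cdot,\varDelta t)-u)\int_0^{\varDelta t}t^{-s}\,dt\big/\varDelta t$, so subtracting amounts to comparing $\int_0^{\varDelta t}(w(\cdot,t)-u)t^{-1-s}dt$ against $(w(\cdot,\varDelta t)-u)\varDelta t^{-1}\int_0^{\varDelta t}t^{-s}dt$. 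In the eigenbasis, the $m$-th coefficient of the difference is
\[
\hat u_m\left[\int_0^{\varDelta t}(e^{-\lambda_m t}-1)\frac{dt}{t^{1+s}}-\frac{e^{-\lambda_m\varDelta t}-1}{\varDelta t}\int_0^{\varDelta t}\frac{dt}{t^{s}}\right],
\]
and the task is to bound this scalar quantity by $C\,\lambda_m^{2}\,\varDelta t^{2-s}$ uniformly in $\lambda_m\ge 0$, so that summing against $|\hat u_m|^2$ produces $\|u\|_{\mathbb{H}_\B^4}\,\varDelta t^{2-s}$ after taking the square root. The key scalar input is the second-order Taylor-type estimate: since $g(t):=e^{-\lambda t}-1$ satisfies $g(0)=0$ and $|g''(t)|=\lambda^2 e^{-\lambda t}\le\lambda^2$, a first-order expansion of $g$ around $t=\varDelta t$ (or comparison of $g(t)/t$ with $g(\varDelta t)/\varDelta t$) yields $|g(t)-\tfrac{t}{\varDelta t}g(\varDelta t)|\le C\lambda^2 \varDelta t\, t$ for $t\in(0,\varDelta t)$; integrating this against $t^{-1-s}$ over $(0,\varDelta t)$ gives the required $\lambda^2\varDelta t^{2-s}$.

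The main obstacle will be obtaining the sharp scalar bound with the correct power $\lambda^2$ (not merely $\lambda$) uniformly for all $\lambda\ge 0$, because this is precisely what upgrades the $\varDelta t^{1-s}$ rate of \cref{lem:S} to $\varDelta t^{2-s}$ and forces the stronger regularity requirement $u\in\mathbb{H}_\B^4$ (via the factor $\lambda_m^2$, whose square summed against $|\hat u_m|^2$ is $\|u\|_{\mathbb{H}_\B^4}^2$). I expect the cleanest route is to recognize $I_S^D$ as the exact integral over $(0,\varDelta t)$ of the \emph{linear interpolant in $t$} of the map $t\mapsto w(\cdot,t)-u$ between the nodes $t=0$ (where it vanishes) and $t=\varDelta t$, against the measure $t^{-1-s}dt$; the interpolation error for a $C^2$ function is controlled by its second derivative, here $\partial_{tt}w(\cdot,t)=\Delta_\B^2 w$, whose $L^2$ norm is bounded by $\|u\|_{\mathbb{H}_\B^4}$. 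Once the pointwise-in-$t$ estimate $\|w(\cdot,t)-\text{(linear interpolant)}\|_0\le C\,\varDelta t\,t\,\|u\|_{\mathbb{H}_\B^4}$ is in hand, Minkowski's integral inequality applied to $\int_0^{\varDelta t}(\cdots)\,t^{-1-s}dt$ closes the estimate and yields the constant $c_S=c_S(s,\|u\|_{\mathbb{H}_\B^4})$ as claimed.
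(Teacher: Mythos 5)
Your proposal is correct and follows essentially the same route as the paper: the paper likewise rewrites $I_S^D[u]$ as the integral against $t^{-1-s}\,dt$ of the linear interpolant $t\mapsto \frac{t}{\varDelta t}\bigl(e^{\varDelta t\Delta_\B}u-u\bigr)$ (which vanishes at $t=0$) and bounds the coefficientwise error via the scalar inequality $0\le \frac{1-e^{-\lambda_m t}}{t}-\frac{1-e^{-\lambda_m\varDelta t}}{\varDelta t}\le \frac{\lambda_m^2\varDelta t}{2}$ on $[0,\varDelta t]$, which is exactly your second-order estimate $|g(t)-\tfrac{t}{\varDelta t}g(\varDelta t)|\le C\lambda_m^2\varDelta t\,t$ with $C=1/2$. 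Summing the resulting bound against $|\hat u_m|^2$ by orthonormality and applying Minkowski's integral inequality then gives $\frac{\|u\|_{\mathbb{H}_\B^4}}{2|\Gamma(-s)|(1-s)}\varDelta t^{2-s}$, precisely as you outline.
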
	
\begin{proof}
Using the properties of the exponential, for all $\lambda_m\geq 0$ we have
\[
0\leq \frac{1-e^{-\lambda_m t}}{t}-\frac{1-e^{-\lambda_m \varDelta t}}{\varDelta t}\leq \frac{\lambda_m^2 \varDelta t}{2} \quad \forall t\in[0, \varDelta t].
\]
Hence, 
\begin{equation}
\label{eq:expineq}
\left(e^{-\lambda_m t}-1-\left(\frac{e^{-\lambda_m \varDelta t}-1}{\varDelta t}\right)t\right)^2\leq \frac{\lambda_m^4 \varDelta t^2 t^2}{4} \quad \forall t\in[0, \varDelta t].
\end{equation}
Letting $w=e^{t\Delta_\B}u$, using the series expansion of $w$ and $u$, and from \eqref{eq:expineq}, we obtain
\begin{align*}
\| I_S[u]-I_S^D[u] \|_0 &\leq \frac{1}{|\Gamma (-s)|}\int_0^{\varDelta t}  \left\| w(\cdot,t)-u-\left(\frac{w(\cdot,\varDelta t)-u}{\varDelta t}\right)t \right\|_0 \frac{dt}{t^{1+s}}\\
&\leq \frac{1}{|\Gamma (-s)|}\int_0^{\varDelta t} \left(\sum_{m=1}^\infty \frac{\lambda_m^4\varDelta t^2 t^2}{4}|\hat{u}_m|^2 \right)^{1/2} \frac{dt}{t^{1+s}}\\
&=\frac{\|u\|_{\mathbb{H}_\B^4} \varDelta t}{2|\Gamma (-s)|}\int_0^{\varDelta t}  t\frac{dt}{t^{1+s}}=\frac{\|u\|_{\mathbb{H}_\B^4}}{2|\Gamma(-s)|(1-s)}\varDelta t^{2-s}.
\end{align*}

\end{proof}

For the middle part, we consider the piecewise linear interpolant of $e^{t\Delta_\B}u(x)-u(x)$ on a grid of equally spaced points $t_j=j\varDelta t$, $j=1,\dots,N_t$. Given a function $\phi:[0,+\infty)\to\R$, the linear interpolant of $\phi$ is defined as
\begin{equation*}
\label{eq:pcal}
\mathcal{P}[\phi](t):=\sum_{j=1}^{N_t} \phi(t_j) P_{\varDelta t}(t-t_j), \text{ with }P_{\varDelta t}(t):=\left\{
\begin{array}{ll}
1-\dfrac{|t|}{\varDelta t}& \text{if } |t|\leq \varDelta t, \\
0 & \text{elsewhere}.
\end{array}
\right.
\end{equation*}
Letting $\psi(x,t):=e^{t\Delta_\B}u(x)-u(x)$, we define
	\begin{equation}
	\label{eq:midpart}
	I_M^D[u](x):= \frac{1}{\Gamma(-s)}\int_{\varDelta t}^{N_t\varDelta t} \mathcal{P}[\psi](x,t)\frac{dt}{t^{1+s}}=\frac{1}{\Gamma(-s)}\sum_{j=1}^{ N_t} (e^{t_j\Delta_\B}u(x)-u(x))~ \rho_j,
	\end{equation}
	where $\rho_j:=\int_{\varDelta t}^{N_t\varDelta t} P_{\varDelta t}(t-t_j) \frac{dt}{t^{1+s}}$.
	Note that the integrals $\rho_j$ are positive since the tent functions $P_{\varDelta t}(t-t_j)$ are positive for all $j$. Moreover, following the same procedure used in \cite{HO2014} for the exact computation of quadrature weights we find:
	\begin{equation*}
	\label{eq:defWeights}
	\rho_j=\frac{1}{\varDelta t^s} \times  \left\{
	\begin{array}{ll}
	-F'(1)+F(2)-F(1)  & j=1,\\
	F(j+1)-2F(j)+F(j-1) & j=2,\dots,N_t-1,\\
	F'(N_t)-F(N_t)+F(N_t-1) & j=N_t,
	\end{array}
	\right.
	\end{equation*}
	where $F$ and $F'$ are defined as in \eqref{eq:funcDer}. 

	\begin{remark}Note that the weights $\beta_j$ in \eqref{eq:defBetas} are equal to $\rho_j$ for all $j$ except for $j=1$ where an extra positive term (coming from the discretization of the singular part of the integral in the definition of $(-\Delta_\B)^s$) is added to $\rho_1$. 
	\end{remark}

	\begin{lemma}\label{lem:mid2}
\label{lem:M2}
Let $s\in(0,1)$, $\varDelta t>0$, $I_M$ be as in \eqref{eq:midPart} with $T_1:=\varDelta t$ and $T_2:=N_t\varDelta t$ for some positive integer $N_t$. Let $I_M^D$ be defined by \eqref{eq:midpart}. Then, there exists a constant $c_M=c_M(s,\|u\|_{\mathbb{H}_\B^4})>0$ such that
	\[
	\|I_M[u]-I_M^D[u]\|_0\leq c_M~\varDelta t^{2-s}. 
	\]
\end{lemma}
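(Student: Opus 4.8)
The plan is to mirror the proofs of \cref{lem:M} and \cref{lem:sing2}: rewrite the error as the integral of the interpolation residual against the singular measure, bring the $L^2(\Omega)$ norm inside the integral by Minkowski's integral inequality, and then reduce everything to the scalar (modewise) linear-interpolation error estimate. First I would use \eqref{eq:midpart} to write
\[
I_M[u](x)-I_M^D[u](x)=\frac{1}{\Gamma(-s)}\int_{\varDelta t}^{N_t\varDelta t}\bigl(\psi(x,t)-\mathcal{P}[\psi](x,t)\bigr)\,\frac{dt}{t^{1+s}},
\]
with $\psi(x,t)=e^{t\Delta_\B}u(x)-u(x)$, and then apply Minkowski's integral inequality to obtain
\[
\|I_M[u]-I_M^D[u]\|_0\leq\frac{1}{|\Gamma(-s)|}\int_{\varDelta t}^{N_t\varDelta t}\|\psi(\cdot,t)-\mathcal{P}[\psi](\cdot,t)\|_0\,\frac{dt}{t^{1+s}}.
\]

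The central step is to estimate $\|\psi(\cdot,t)-\mathcal{P}[\psi](\cdot,t)\|_0$ uniformly in $t$. Setting $g_m(t):=e^{-\lambda_m t}-1$ and using linearity of $\mathcal{P}$ together with the spectral expansion \eqref{eq:Wseries}, I would observe that $\mathcal{P}$ acts diagonally on the modes, i.e.\ $\mathcal{P}[\psi](\cdot,t)=\sum_m \mathcal{P}[g_m](t)\,\hat u_m\varphi_m$, so that orthonormality gives
\[
\|\psi(\cdot,t)-\mathcal{P}[\psi](\cdot,t)\|_0^2=\sum_m\bigl(g_m(t)-\mathcal{P}[g_m](t)\bigr)^2|\hat u_m|^2.
\]
On the integration range $[\varDelta t,N_t\varDelta t]$ the function $\mathcal{P}[g_m]$ coincides with the piecewise linear interpolant of $g_m$ at the nodes $t_j$, so the classical $C^2$ interpolation bound yields $|g_m(t)-\mathcal{P}[g_m](t)|\leq\tfrac18\varDelta t^2\sup_t|g_m''(t)|$ on each subinterval. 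Since $g_m''(t)=\lambda_m^2 e^{-\lambda_m t}\leq\lambda_m^2$ for all $t\geq0$, summing against $|\hat u_m|^2$ produces the uniform bound
\[
\|\psi(\cdot,t)-\mathcal{P}[\psi](\cdot,t)\|_0\leq\frac{\varDelta t^2}{8}\Bigl(\sum_m\lambda_m^4|\hat u_m|^2\Bigr)^{1/2}=\frac{\varDelta t^2}{8}\|u\|_{\mathbb{H}_\B^4},
\]
which is precisely where the extra regularity $u\in\mathbb{H}_\B^4$ coming from $(HA_u)$ is consumed.

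Finally I would insert this estimate into the Minkowski bound and evaluate the remaining singular integral, $\int_{\varDelta t}^{N_t\varDelta t}t^{-1-s}\,dt\leq (s\,\varDelta t^s)^{-1}$, to conclude
\[
\|I_M[u]-I_M^D[u]\|_0\leq\frac{\|u\|_{\mathbb{H}_\B^4}}{8\,s\,|\Gamma(-s)|}\,\varDelta t^{2-s},
\]
giving the claim with $c_M=\|u\|_{\mathbb{H}_\B^4}/(8\,s\,|\Gamma(-s)|)$. The only delicate points, and the ones I expect to be the main (though mild) obstacle, are justifying that $\mathcal{P}$ commutes with the eigenfunction expansion so that the error splits modewise, and checking that the second-derivative factor $\lambda_m^2$ pairs with exactly the $\mathbb{H}_\B^4$ norm and no higher; the uniform bound $g_m''(t)\leq\lambda_m^2$ conveniently removes any need to track the exponential decay on this finite range, which is what keeps the final integral elementary.
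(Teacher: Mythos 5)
Your proof is correct and follows essentially the same route as the paper's: the same modewise splitting $\mathcal{P}[\psi](\cdot,t)=\sum_m \mathcal{P}[g_m](t)\,\hat u_m\varphi_m$ justified by orthonormality, the same Lagrange linear-interpolation estimate with $|g_m''(t)|=\lambda_m^2 e^{-\lambda_m t}\leq \lambda_m^2$ consuming exactly the $\mathbb{H}_\B^4$ norm, and the same Minkowski-plus-singular-integral conclusion $\int_{\varDelta t}^{N_t\varDelta t}t^{-1-s}\,dt\leq (s\,\varDelta t^s)^{-1}$ yielding the rate $\varDelta t^{2-s}$. The only differences are cosmetic: you carry the explicit interpolation constant $1/8$ where the paper keeps a generic $c$, and you write the error as one integral over $[\varDelta t,N_t\varDelta t]$ rather than a sum over subintervals, which is the same computation.
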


\begin{proof}
	We start by splitting the integral over $[\varDelta t,N_t \varDelta t]$ as the sum of integrals over subintervals $\mathcal{I}_i= [t_j,t_{j+1}]$ of length $\varDelta t$. Let $w(x,t)=e^{t\Delta_\B}u(x)$ and $\psi(x,t)=w(x,t)-u(x)$. For notational convenience, we denote $g_m(t)=e^{-\lambda_m t}-1$. Then, using \eqref{eq:Wseries} it is straightforward to show that
	\[
	\mathcal{P}[\psi](x,t)=\sum_{j=1}^{N_t} \left(\sum_{m=1}^{\infty} g_m(t)\hat{u}_m \varphi_m(x) \right) P_{\varDelta t}(t-t_j)=\sum_{m=1}^{\infty} \mathcal{P}[g_m](t)\hat{u}_m \varphi_m(x).
	\] 
	and thus,
	\[
	\psi(x,t)-\mathcal{P}[\psi](x,t)=\sum_{m=1}^{\infty} \left(g_m(t)-\mathcal{P}[g_m](t)\right)\hat{u}_m \varphi_m(x).
	\] 
	On each subinterval $\mathcal{I}_i$, $\mathcal{P}[g_m]$ is the Lagrange linear interpolant of $g_m$ and hence there exists a constant $c$ independent of $\varDelta t$ and $t_j$ such that for all $t\in \mathcal{I}_i$,
\[
\|g_m-\mathcal{P}[g_m]\|_{L^\infty([t_j,t_{j+1}])}\leq c\ \varDelta t^2\left\|\frac{\partial^2 g_m}{\partial t^2}\right\|_{L^\infty([t_j,t_{j+1}])} \leq c\ \varDelta t^2\lambda_m^2.
\]
Consequently, for all $t\in \mathcal{I}_i$,
\begin{equation*}
\begin{split}
	\|\psi(t)-\mathcal{P}[\psi](t)\|_0&=\left( \sum_{m=1}^\infty |g_m(t)-\mathcal{P}[g_m](t)|^2|\hat{u}_m|^2\right)^{\frac{1}{2}}\\
	&\leq c\ \varDelta t^2  \left( \sum_{m=1}^\infty \lambda_m^4|\hat{u}_m|^2\right)^{\frac{1}{2}}= c\ \|u\|_{\mathbb{H}_\B^4}\varDelta t^2.
\end{split}
\end{equation*}
	From the definitions of $I_M$ and $I_M^D$, the triangular inequality and Minkowski's integral inequality we hence get
	\begin{align}
	 \|I_M[u]-I_M^D[u]\|_0&\leq \frac{1}{|\Gamma(-s)|} \sum_{j=1}^{N_t} \int_{t_j}^{t_{j+1}} \|\psi(t)-\mathcal{P}[\psi](t)\|_0 \frac{dt}{t^{1+s}}\\
	&\leq \frac{c\ \|u\|_{\mathbb{H}_\B^4} \varDelta t^2}{|\Gamma(-s)|}\int_{\varDelta t}^{N_t\varDelta t} \frac{dt}{t^{1+s}}\leq \frac{c\ \|u\|_{\mathbb{H}_\B^4}}{|\Gamma(-s)|s}~\varDelta t^{2-s}.
	\end{align}
\end{proof}
		
	Finally, we set $I_\infty$ as in \eqref{eq:taildisc} and define
	\begin{equation}\label{eq:OpDisc2}
	\Theta^s u:= I_S^D[u]+I_M^D[u] + I_\infty^D[u].
	\end{equation}
	Then, $\Theta^s u$ can be written as in \eqref{eq:quadrule} with weights $\beta_j$ given by \eqref{eq:defBetas}-\eqref{eq:funcDer}.
		\begin{proposition}
		\label{prop:HOsplit}
		Let assumptions of \cref{thm:HOmain} hold, $(-\Delta_\B)^s$ and $\Theta^s $ be defined by \eqref{eq:start} and \eqref{eq:OpDisc2}. Then, there exists a constant $c_a=c_a(s,\|u\|_{\mathbb{H}_\B^4})>0$ such that
	\[
	\| (-\Delta_\B)^su-\Theta^s u\|_0\leq c_a~\varDelta t^{2-s}.
	\]
\end{proposition}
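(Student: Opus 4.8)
The plan is to follow the structure of the proof of \cref{prop:Quad} almost verbatim, since the three approximations $I_S^D$, $I_M^D$, $I_\infty^D$ have already been set up and the corresponding error lemmas already proved; the only substantive differences from the low-order case are the sharper quadratures, the relocated splitting points $T_1=\varDelta t$ and $T_2=N_t\varDelta t$, and the strengthened logarithmic lower bound on $N_t$ (with the factor $2-s$ in place of $1-s$). First I would invoke the decomposition $(-\Delta_\B)^su=I_S[u]+I_M[u]+I_\infty[u]$ together with the definition \eqref{eq:OpDisc2} of $\Theta^s u$ and apply the triangle inequality to obtain
\[
\| (-\Delta_\B)^su-\Theta^s u\|_0 \leq \|I_S[u]-I_S^D[u]\|_0 + \|I_M[u]-I_M^D[u]\|_0 + \|I_\infty[u]-I_\infty^D[u]\|_0.
\]

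I would then bound the singular and middle contributions directly by \cref{lem:sing2} and \cref{lem:mid2}, each of which gives a term of order $\varDelta t^{2-s}$; both lemmas require $u\in\mathbb{H}_\B^4(\Omega)$, which is supplied by assumption \textup{($HA_u$)} since $r=\max\{k+1,4\}\geq4$.

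The only genuinely new point, and the step I expect to be the main obstacle, is the tail. Applying \cref{lem:inf} with $T_2=N_t\varDelta t$ gives the bound $c_\infty\,e^{-\lambda_{\rm{min}}N_t\varDelta t}/(N_t\varDelta t)^s$, and I would check that the prescribed choice $N_t\geq\frac{2-s}{\lambda_{\rm{min}}\varDelta t}\log(1/\varDelta t)$ forces this quantity to be of order $\varDelta t^{2-s}$. The lower bound yields $\lambda_{\rm{min}}N_t\varDelta t\geq(2-s)\log(1/\varDelta t)$, hence $e^{-\lambda_{\rm{min}}N_t\varDelta t}\leq\varDelta t^{2-s}$; since $N_t\varDelta t\to\infty$ as $\varDelta t\to0$, we have $(N_t\varDelta t)^s\geq1$ for $\varDelta t$ small, so the tail is also bounded by $\varDelta t^{2-s}$. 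Summing the three estimates and absorbing the constants into a single $c_a=c_a(s,\|u\|_{\mathbb{H}_\B^4})$ completes the proof. Notice that, compared with the low-order case where the factor $1-s$ sufficed to match the $\varDelta t^{1-s}$ rate of \cref{lem:S} and \cref{lem:M}, here the factor is raised to $2-s$ precisely so that the tail does not degrade the improved $\varDelta t^{2-s}$ rate coming from \cref{lem:sing2} and \cref{lem:mid2}.
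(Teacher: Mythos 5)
Your proposal is correct and coincides with the paper's own proof: both apply the triangle inequality to the splitting $(-\Delta_\B)^su=I_S[u]+I_M[u]+I_\infty[u]$, invoke \cref{lem:sing2} and \cref{lem:mid2} for the $\varDelta t^{2-s}$ bounds, and use \cref{lem:inf} with $T_2=N_t\varDelta t$ together with the strengthened condition on $N_t$ to control the tail. Your explicit verification that $e^{-\lambda_{\rm{min}}N_t\varDelta t}\leq\varDelta t^{2-s}$ and that $(N_t\varDelta t)^s\geq 1$ for small $\varDelta t$ is in fact slightly more careful than the paper, which compresses this into one sentence.
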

	
	\begin{proof}
 We proceed as in the proof of \cref{prop:Quad} using now \cref{lem:sing2}, \cref{lem:mid2} and \cref{lem:inf} to get
	\begin{equation*}
		\begin{split}
			\|(-\Delta_\B)^su-\Theta^s u\|_0&\leq \|I_S[u]-I_S^D[u]\|_0+\|I_M[u]-I_M^D[u]\|_0+\|I_\infty [u]-I_\infty^D[u]\|_0\\
			&\leq c_S~\varDelta t^{2-s}+c_M~\varDelta t^{2-s}+c_\infty\frac{e^{-\lambda_{\rm{min}}T_2}}{T_2^s}.
		\end{split}
	\end{equation*}
	Using the definition of $N_t$ and choosing $T_2=N_t\varDelta t$, we get $\frac{e^{-\lambda_{\rm{min}}T_2}}{T_2^s}\leq \varDelta t^{2-s}$. A suitable choice of the constant $c_a$ concludes the proof.
\end{proof}

With \cref{prop:HOsplit} we complete the control of the error coming from the discretization of the integral. 

	\begin{proposition}
		\label{prop:HOM}
		Let assumptions of \cref{thm:HOmain} hold and let $\Theta^s$ be defined by \eqref{eq:OpDisc2}. Then, there is a constant $c_D=c_D(s,\Omega,\|u\|_{k+1},\B,k)>0$ such that
		\[
		\|\Theta^s u - \Theta^s_h u \|_0\leq c_D~( h^{k+1} \varDelta t^{-s} + \varDelta t^{2-s}).
		\]

	\end{proposition}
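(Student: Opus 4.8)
The plan is to mirror the proof of \cref{prop:M} almost line for line; the only genuinely new ingredient is that the first-order-in-time finite element estimate \eqref{eq:FElowOrd} gets replaced by its second-order Crank--Nicolson analogue, which is exactly what upgrades the temporal exponent from $1-s$ to $2-s$.

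First I would make the difference explicit. Writing $\Theta^s u$ in the series form \eqref{eq:quadrule} with the weights \eqref{eq:defBetas} and $\Theta^s_h u$ as in \eqref{eq:discop}, the common tail contribution $(w_\infty-u)\,\beta_\infty$ cancels; then, since $u=w(\cdot,0)$ and $w_h(\cdot,0)=P^k_h u$,
\begin{equation*}
\Theta^s u-\Theta^s_h u=\frac{1}{\Gamma(-s)}\sum_{j=1}^{N_t}\Big[\big(w(\cdot,t_j)-w_h(\cdot,t_j)\big)-\big(w(\cdot,0)-w_h(\cdot,0)\big)\Big]\beta_j,
\end{equation*}
so that taking $L^2$ norms and using the triangle inequality reproduces exactly the bound \eqref{eq:bound0}. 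Thus the problem reduces, just as before, to estimating $\|w(\cdot,t_j)-w_h(\cdot,t_j)\|_0$ and $\|w(\cdot,0)-w_h(\cdot,0)\|_0$ and summing against the $\beta_j$.

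For the initial-time terms I would reuse the projection estimate \eqref{eq:estTri} unchanged, contributing $O(h^{k+1})$ per summand. For $t_j>0$ the decisive step is to invoke the Crank--Nicolson ($\theta=1/2$) counterpart of \eqref{eq:FElowOrd}, namely $\|w(\cdot,t_j)-w_h(\cdot,t_j)\|_0\le \tilde c\,(h^{k+1}+\varDelta t^2)$. This is precisely where the strengthened hypothesis \textup{($HA_u$)}, $u\in\mathbb{H}_\B^{\max\{k+1,4\}}$, is consumed: the $O(\varDelta t^2)$ temporal rate requires control of higher time-derivatives of the exact solution $w=\sum_m e^{-\lambda_m t}\hat u_m\varphi_m$ uniformly down to $t=0$, and one checks directly that $\|\partial_t^2 w(\cdot,0)\|_0=\|u\|_{\mathbb{H}_\B^4}$, so the $\mathbb{H}_\B^4$ membership is exactly what is needed. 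I expect this estimate to be the main obstacle: its validity for small $t_j$ cannot be obtained by naively bounding time-derivatives term by term in the sum, and instead rests on parabolic smoothing combined with \textup{($HA_u$)}, in the spirit of the references cited for \eqref{eq:FElowOrd}.

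Finally I would control the weight sum. Since every $\beta_j>0$ and the tent functions form a partition of unity on $[\varDelta t,N_t\varDelta t]$, the middle-part weights satisfy $\sum_{j=1}^{N_t}\rho_j=\int_{\varDelta t}^{N_t\varDelta t} t^{-1-s}\,dt\le \tfrac1s\,\varDelta t^{-s}$, while the extra singular-part term added to $\beta_1$ equals $\tfrac{1}{1-s}\varDelta t^{-s}$; hence $\sum_{j=1}^{N_t}\beta_j\le C\,\varDelta t^{-s}$. Multiplying the two per-term bounds by this sum gives
\begin{equation*}
\|\Theta^s u-\Theta^s_h u\|_0\le c_0\,h^{k+1}\varDelta t^{-s}+c_1\big(h^{k+1}+\varDelta t^2\big)\varDelta t^{-s}\le c_D\big(h^{k+1}\varDelta t^{-s}+\varDelta t^{2-s}\big),
\end{equation*}
which is the assertion. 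Apart from the Crank--Nicolson estimate, every step is identical to \cref{prop:M}.
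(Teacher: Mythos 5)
Your proposal is correct and follows essentially the same route as the paper's proof: reduce to the bound \eqref{eq:bound0} as in \cref{prop:M}, keep the projection estimate \eqref{eq:estTri}, replace \eqref{eq:FElowOrd} by the Crank--Nicolson estimate $\mathcal{O}(h^{k+1}+\varDelta t^2)$, and bound $\sum_{j=1}^{N_t}\beta_j\leq c\,\varDelta t^{-s}$. Your only (harmless) deviation is computing $\sum_j\rho_j=\int_{\varDelta t}^{N_t\varDelta t}t^{-1-s}\,dt$ via the partition-of-unity property of the tent functions, where the paper telescopes the second differences to get $\sum_j\rho_j=\bigl(F'(N_t)-F'(1)\bigr)\varDelta t^{-s}$ --- the two computations give the same bound.
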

	\begin{proof}
		We follow the exact same procedure and notation used in the proof of \cref{prop:M}. The bound \eqref{eq:estTri} for the projection error still holds.
The FE error for $t_j>0$ can be now improved exploiting the fact that the second-order Crank-Nicolson discretization scheme is used in time, leading to an error $\mathcal{O}(h^{k+1}+\varDelta t^2)$ (see e.g., Corollary 11.3.1 in \cite{QV2008} and \cite{luskin_etal-aa-1982}). Thus, we find		
\[
\sum_{j=1}^{N_t} \|w(\cdot,t_j)-w_h(\cdot,t_j)\|~ \beta_j\leq c  \left(h^{k+1}+\varDelta t^2\right) \sum_{j=1}^{N_t}  \beta_j.
\]
By definition, $\beta_j>0$ for all $j$. Consequently,
	\[
	\sum_{j=1}^{N_t}\beta_j =\frac{\varDelta t^{-s}}{1-s}+\sum_{j=1}^{N_t}\rho_j= \frac{\varDelta t^{-s}}{1-s}+\frac{F'(N_t)-F'(1)}{\varDelta t^s}\leq c~ \varDelta t^{-s}. 
	\]
		Thus, following the proof of \cref{prop:M}, we conclude that
		 \[
		 	\|\Theta^s u - \Theta^s_h u \|_0\leq c_D~( h^{k+1} + \varDelta t^2) \varDelta t^{-s}.
		 \]
		\end{proof}

We can now prove the higher order discretization result.
\begin{proof}[Proof of \cref{thm:HOmain}]
	Following the same steps done in the proof of \cref{thm:main}, now exploiting the estimates of \cref{prop:HOsplit} and \cref{prop:HOM}, we obtain
\[
	\|(-\Delta_\B)^su-\Theta^s_h u \|_0 \leq c~( h^{k+1} \varDelta t^{-s} + \varDelta t^{2-s}),
\]
	By using the relationship between $h$ and $\varDelta t$ given by~\eqref{eq:HOrestr}, we have $h^{k+1}\varDelta t^{-s}=\eta^{-s}h^{k+1-ps}$ and $\varDelta t^{2-s}=\eta^{2-s}h^{p(2-s)}$. Moreover, $k+1-ps\geq p(2-s)$ for $p\leq \frac{k+1}{2}$. Therefore, the order of convergence is determined by $\varDelta t^{2-s}= h^{p(2-s)}$ and by suitably choosing the constant $C$ in \eqref{eq:ofHOMain} we obtain the desired result.
\end{proof}

\section{Nonlocal operators with non-homogeneous boundary conditions}
\label{sec:inhomBC}
In many practical physical applications, the condition required at the boundary is not necessarily homogeneous. In this section we define and deal with a family of nonlocal operators of fractional type that are compatible with non-homogeneous Dirichlet, Neumann or Robin boundary conditions.

\subsection{The nonlocal operator}
Consider $u:\Omega \to \mathbb{R}$, satisfying $\mathcal{B}(u)=g$ on $\partial \Omega$ for some $g:\partial \Omega\to \R$. Let $v(x,t)$ be the solution of the following heat equation with initial condition $u$ and non-homogeneous boundary conditions:
\begin{equation}
\label{eq:heatg}
\left\{
\begin{array}{lcl}
\partial_t v-\Delta v = 0 & \quad & (x,t)\in \Omega\times (0,\infty),\\
v(x,0)=u(x)& & x\in \Omega,\\
\B(v(\cdot,t))(x)=g(x) & & (x,t) \in \partial \Omega\times [0,\infty).
\end{array}
\right.
\end{equation}
Generalizing \cref{def:fraclap}, we define the nonlocal operator $\LBs$ on the given $u$ as
\begin{equation}\label{eq:fraclapnon}
\LBs [u](x):=\frac{1}{\Gamma(-s)}\int_{0}^\infty \left(v(x,t)-v(x,0)\right) \frac{d t}{t^{1+s}}, \quad \forall x\in \Omega.
\end{equation}

We intentionally avoid using the notations $e^{t\Delta}$ and $(-\Delta)^s$ when dealing with non-homogeneous boundary conditions since $\LBs$ is not a fractional power of the Laplacian coupled to non-homogeneous boundary conditions. However, as we show in Section~\ref{sec:altchar}, $\LBs[u]=(-\Delta_\B)^s[u-z]$, where $z$ is the harmonic extension of $g$ to $\Omega$.

Clearly, the same strategy previously proposed in this work can be followed to obtain numerical approximations of~\eqref{eq:fraclapnon}: the solution of the heat equation~\eqref{eq:heatg} can be approximated via FE accounting for non-homogeneous boundary conditions, while the quadrature formulas for the approximation of the singular integral~\eqref{eq:fraclapnon} remain unchanged. Moreover, the above-mentioned characterization of the nonlocal operator $\LBs$ in terms of $(-\Delta_\B)^s$ can be exploited as a powerful tool in the numerical approach (see Section~\ref{sec:numnonh}).

After the first submission of this manuscript we came across the work by Antil et al.~\cite{antil_etal-arxiv-2017} in which they propose an alternative way to deal with non-homogeneous boundary data. However, in the very recent work by Lischke et al.~\cite{lischke_etal-arxiv-2018}, our definition (given by \eqref{eq:heatg}-\eqref{eq:fraclapnon}) and the one presented in~\cite{antil_etal-arxiv-2017} are shown to be equivalent. Moreover, the authors of~\cite{lischke_etal-arxiv-2018} also present a nice and detailed comparison of the two approaches from the numerical point of view.

\subsection{Characterisation of $\LBs$}\label{sec:altchar}
The solution of \eqref{eq:heatg} can be written as the sum $v(x,t)=w(x,t)+z(x)$, where $w(x,t)=e^{t\Delta_{\B}}[u-z](x)$ is the solution of the heat equation with homogeneous boundary conditions and shifted initial condition, i.e.,
\begin{equation}
\label{eq:heat0}
\left\{
\begin{array}{lcl}
\partial_t w-\Delta w = 0 & \quad & (x,t)\in \Omega\times (0,\infty),\\
w(x,0)=u(x)-z(x)& & x\in \Omega,\\
\B(w(\cdot,t))(x)=0 & & (x,t) \in \partial \Omega\times [0,\infty),
\end{array}
\right.
\end{equation}
and $z$ is the steady-state of \eqref{eq:heatg}, i.e., the solution of the harmonic extension problem
\begin{equation}
\label{eq:ellip}
\left\{
\begin{array}{lcl}
\Delta z = 0 & \quad & x\in \Omega\\
\mathcal{B}(z)(x)=g & & x \in \partial \Omega.
\end{array}
\right.
\end{equation}
The following characterization result provides an alternative definition of $\LBs$.	
\begin{lemma} \label{lem:shift}
	Assume that $u\in H^{2s}(\Omega)$ such that $\B(u)=g$ and let the solution of~\eqref{eq:ellip} be $z\in H^{2s}(\Omega)$. Let also $\mathcal{L}^s_g$ and $(-\Delta_\B)^s$ be defined by \eqref{eq:heatg}-\eqref{eq:fraclapnon} and \eqref{eq:start} respectively. Then, 
	\[
		\LBs [u]=(-\Delta_{\B})^s [u-z] \qquad \textup{in } \Omega.
	\]
\end{lemma}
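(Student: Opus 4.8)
The plan is to exploit the decomposition $v(x,t) = w(x,t) + z(x)$ already announced in the surrounding text, where $w$ solves the homogeneous heat problem \eqref{eq:heat0} with the shifted initial datum $u - z$, and $z$ solves the harmonic extension problem \eqref{eq:ellip}. Once this decomposition is rigorously justified, the claimed identity reduces to a direct substitution into the defining integral \eqref{eq:fraclapnon}, so the bulk of the argument is essentially algebraic.

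First I would verify that $v = w + z$ is indeed the solution of \eqref{eq:heatg}. Since $z$ is time-independent and harmonic, one has $\partial_t(w+z) - \Delta(w+z) = (\partial_t w - \Delta w) - \Delta z = 0$; the initial datum is recovered as $(w+z)(x,0) = (u-z)(x) + z(x) = u(x)$; and, using the linearity of each boundary operator $\B$ from \eqref{eq:DirB}--\eqref{eq:RobB}, the boundary condition follows from $\B(w+z) = \B(w) + \B(z) = 0 + g = g$. By uniqueness of the (weak) heat solution, this $v$ coincides with the solution appearing in \eqref{eq:heatg}.

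Next I would substitute this decomposition into \eqref{eq:fraclapnon}. Writing $v(x,0) = u(x)$ together with $v(x,t) = w(x,t) + z(x)$ gives $v(x,t) - v(x,0) = w(x,t) - (u-z)(x) = e^{t\Delta_{\B}}[u-z](x) - (u-z)(x)$, where the last equality uses $w(\cdot,0) = u - z$ and the definition of $w$ in \eqref{eq:heat0}. Inserting this into the integral reproduces exactly the right-hand side of \eqref{eq:start} evaluated at $u - z$, which by \cref{def:fraclap} is precisely $(-\Delta_{\B})^s[u-z]$, thereby establishing the claim.

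The only genuine obstacle is to ensure that the right-hand side is well defined, i.e. that $u - z \in \mathbb{H}_\B^{2s}(\Omega)$ so that \cref{def:fraclap} applies and the integral converges. This holds because $u, z \in H^{2s}(\Omega)$ and $\B(u-z) = g - g = 0$ on $\partial\Omega$, so $u - z$ satisfies the homogeneous compatibility condition characterizing $\mathbb{H}_\B^{2s}$ (cf.\ \eqref{eq:Hbchar}); the decay of $w(\cdot,t)$ as $t \to \infty$ that controls the tail of the integral is then guaranteed exactly as in \cref{lem:inf}. Everything else is routine substitution.
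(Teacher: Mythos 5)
Your proof is correct and follows essentially the same route as the paper: one verifies that $v=w+z$ solves \eqref{eq:heatg} (checking the PDE using $\Delta z=0$, the initial datum, and the boundary condition via linearity of $\B$), and then substitutes into the defining integral \eqref{eq:fraclapnon} to recover \eqref{eq:start} applied to $u-z$. Your closing remarks on uniqueness of the weak solution and on the membership $u-z\in \mathbb{H}_\B^{2s}(\Omega)$ are in fact slightly more explicit than the paper's one-line observation that $u-z\in H^{2s}(\Omega)$ with $\B(u-z)=0$, but the underlying argument is identical.
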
	
\begin{proof}
	Clearly, $u-z \in H^{2s}(\Omega)$ and $\B(u-z)=0$. Let $w(x,t):= e^{t\Delta_{\B}}[u-z](x)$ be the solution of \eqref{eq:heat0}. Then, by using \eqref{eq:ellip} and \eqref{eq:heat0}, for every $x\in\Omega$ the function $v(x,t):=w(x,t)+z(x)$ is such that
	\[
	v_t-\Delta v=w_t-\Delta w+\Delta z=0,
	\] 
	and
	\[
	v(x,0)=w(x,0)+z(x)=u(x)-z(x)+z(x)=u(x).
	\]
	Moreover, it is immediate to see that $\B(v(\cdot,t))=\B(w(\cdot,t))+\B(z)=g$ for every $t\geq 0$ and
	therefore $v(x,t)$ is a solution of \eqref{eq:heatg}. Then,
	\begin{equation*}
		\begin{split}
			(-\Delta_{\B})^s [u-z](x)&=\frac{1}{\Gamma(-s)}\int_{0}^\infty \left(e^{t\Delta_{\B}}[u-z](x)-[u-z](x) \right)\frac{d t}{t^{1+s}}\\
			&=\frac{1}{\Gamma(-s)}\int_{0}^\infty \left(w(x,t)-w(x,0)\right) \frac{d t}{t^{1+s}}\\
			&=\frac{1}{\Gamma(-s)}\int_{0}^\infty \big([w(x,t)+z(x)]-[w(x,0)+z(x)]\big) \frac{d t}{t^{1+s}}\\
			&=\frac{1}{\Gamma(-s)}\int_{0}^\infty \left(v(x,t)-v(x,0)\right) \frac{d t}{t^{1+s}}\\
			&=\LBs [u](x),
		\end{split}
	\end{equation*}
	which concludes the proof.
\end{proof}

The condition requiring the solution to the harmonic extension problem~\eqref{eq:ellip} to be $z\in H^{r}(\Omega)$ for some $r\geq0$ relies heavily on the geometry of the domain $\Omega$, the specific type of boundary conditions $\B$ imposed, and the regularity of the boundary data $g$. Providing a general theory able to encompass all these regularity considerations is far from the main goal of the present article. However, for illustration purposes, we focus on a particular example and show how the numerical tools derived in the previous sections can be applied in these settings. 

\subsection{Numerical approximation in the non-homogeneous case}\label{sec:numnonh}

For simplicity, in this section we restrict ourselves to the case of Dirichlet boundary conditions in a two-dimensional convex polygonal domain $\Omega$ (thus satisfying $\textup{($A_\Omega$)}$). 

Consider a function $u\in H^2(\Omega)$ and let $g=u|_{\partial \Omega}$ be the trace of $u$ in $\Omega$. Classical regularity theory ensures that $g\in H^{\frac{3}{2}}(\partial \Omega)$ and hence, the solution $z$ of \eqref{eq:ellip} is such that $z\in H^2(\Omega)$ (see for example Theorem 1.5 and Theorem 1.8 in \cite{GiRa86}). Thus, $u-z \in H^2(\Omega)$ and $(u-z)|_{\partial \Omega}=0$, via \eqref{eq:Hbchar} we obtain that $u-z\in \mathbb{H}_\B^2(\Omega)$.

In practice, given a function $u$, $z$ will not be a known analytically. To overcome this issue, we consider $\tilde{z}_h$ to be the FE approximation of degree $k= 1$ of problem~\eqref{eq:ellip} for a given triangulation. Namely, $\tilde{z}_h=\sum_{i=1}^{N_h}Z_i \phi_i$, according to the notation previously used in this manuscript. Similarly to \eqref{eq:heat0}, let $\tilde{w}$ be the solution of 
\begin{equation} \label{eq:heatshift2}
\left\{
\begin{array}{ll}
\partial_t \tilde{w}-\Delta \tilde{w} = 0 & \text{in }  \Omega \times (0,\infty)\\
\tilde{w}(x,0)=u(x)-\tilde{z}_h(x)& \text{in }  \Omega \\
\B(\tilde{w}(\cdot,t))(x)=0 & \text{on } \partial  \Omega \times [0,\infty),
\end{array}
\right.
\end{equation}
and $\tilde{w}_h$ be the corresponding FE approximation of degree $k=1$.

The following theorem provides a numerical approximation of $\LBs[u]$ and, exploiting the result of Theorem~\ref{thm:main}, gives a bound for the $L^2$-norm of the error. 

\begin{theorem}\label{thm:inhomBC}
	Let $s\in(0,1)$,  $\theta\in[0,1]$, $k=1$ and $h,\varDelta t>0$ such that \eqref{eq:restr} holds. Let $\Omega\subset \mathbb{R}^2$, assume that \textup{($A_\Omega$)} and \textup{($A_{\mathcal{T}_h}$)} hold, and $\B$ is the Dirichlet boundary operator. Let $u\in H^2(\Omega)$ and $g:=u|_{\partial \Omega}$. Let $\tilde{z}_h$ be the FE solution of degree $k$ of  \eqref{eq:ellip} and $\Theta^s_h$ be defined according to \eqref{eq:discop}, with $N_t$, $\beta_j$ and $\beta_\infty$ given by (b) and (c) from \cref{thm:main}. Then,
	\[
		\|\LBs[u]-\Theta^s_h [u-\tilde{z}_h]\|_{0}\leq C ~ h^{p(1-s)}.
	\]
\end{theorem}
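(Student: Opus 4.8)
The plan is to reduce everything to \cref{thm:main} by combining the characterisation in \cref{lem:shift} with the linearity of the discrete operator $\Theta^s_h$. By \cref{lem:shift}, $\LBs[u]=(-\Delta_\B)^s[u-z]$, where $z\in H^2(\Omega)$ is the exact solution of the harmonic extension problem \eqref{eq:ellip}; as already observed before the statement, $u-z\in\mathbb{H}_\B^2(\Omega)$. I would then insert the intermediate term $\Theta^s_h[u-z]$ and apply the triangle inequality to split
\[
\|\LBs[u]-\Theta^s_h[u-\tilde z_h]\|_0\le \underbrace{\|(-\Delta_\B)^s[u-z]-\Theta^s_h[u-z]\|_0}_{\text{(I)}}+\underbrace{\|\Theta^s_h[u-z]-\Theta^s_h[u-\tilde z_h]\|_0}_{\text{(II)}}.
\]

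For term (I), the hypotheses $(A_\Omega)$, $(A_{\mathcal{T}_h})$, the Dirichlet choice of $\B$ (so that $(A_\B)$ holds), the relation \eqref{eq:restr} (which for $\theta<\frac12$ enforces $p\ge 2$ and hence a CFL-type bound on $\varDelta t\,h^{-2}$, i.e.\ $(A_{\text{CFL}})$), and the membership $u-z\in\mathbb{H}_\B^{k+1}$ with $k=1$ (so that $(A_u)$ holds) are precisely the assumptions of \cref{thm:main} applied to the datum $u-z$. Thus $\text{(I)}\le C\,h^{p(1-s)}$ directly.

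For term (II), I would use that in the Dirichlet case the steady state in \eqref{eq:discop} is $w_\infty=0$ independently of the initial datum, so that $f\mapsto\Theta^s_h f$ is linear and $\text{(II)}=\|\Theta^s_h[z-\tilde z_h]\|_0$. The key step is an operator bound $\|\Theta^s_h f\|_0\le C\,\varDelta t^{-s}\|f\|_0$. This follows from the $L^2$-stability of the discrete $\theta$-scheme (every discrete mode has amplification factor of modulus at most one, unconditionally for $\theta\ge\frac12$ and under $(A_{\text{CFL}})$ for $\theta<\frac12$), which gives $\|w_h^f(\cdot,t_j)\|_0\le\|P^k_h f\|_0\le\|f\|_0$ for the FE heat solution $w_h^f$ with datum $f$, together with the elementary estimates $\sum_{j=1}^{N_t}\beta_j=\int_{\varDelta t/2}^{T_2}t^{-1-s}\,dt\le C\varDelta t^{-s}$ and $\beta_\infty\le C\varDelta t^{-s}$ for the weights of \cref{thm:main}. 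Inserting the standard piecewise-linear $L^2$ finite element estimate $\|z-\tilde z_h\|_0\le C\,h^2\|z\|_2$ (valid since $z\in H^2(\Omega)$ on the convex polygon $\Omega$, via Aubin--Nitsche duality) and $\varDelta t=\eta\,h^p$ yields $\text{(II)}\le C\,\varDelta t^{-s}h^2=C\,h^{2-ps}$.

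It remains to compare exponents: since $k=1$, the admissible range in \eqref{eq:restr} forces $p\le 2$, whence $2-ps\ge p(1-s)$ and therefore $h^{2-ps}\le h^{p(1-s)}$ for $h\le 1$; combining (I) and (II) and relabelling $C$ gives the claim. I expect the main obstacle to be the rigorous justification of the operator bound $\|\Theta^s_h f\|_0\le C\varDelta t^{-s}\|f\|_0$: one must control the discrete heat semigroup acting on $f=z-\tilde z_h$, which lies only in $H^1(\Omega)$ and carries no compatibility structure (it is not in any $\mathbb{H}_\B^r(\Omega)$), so the argument must rely purely on the $L^2$-stability of the scheme and on the contraction property of the projection $P^k_h$, rather than on the regularity-based estimates \eqref{eq:FElowOrd} used in \cref{prop:M}.
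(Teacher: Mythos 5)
Your proposal is correct and follows essentially the same route as the paper's proof: reduction via \cref{lem:shift}, the triangle inequality through the intermediate term $\Theta^s_h[u-z]$, \cref{thm:main} for the first piece, and for the second piece the $L^2$-stability of the discrete scheme combined with $\sum_j\beta_j\leq C\varDelta t^{-s}$, the $O(h^{k+1})$ error for $\tilde z_h$, and the exponent comparison $2-ps\geq p(1-s)$ for $p\leq 2$. The only (cosmetic) difference is that you exploit linearity of $\Theta^s_h$ and the contraction $\|P^k_h f\|_0\leq\|f\|_0$ applied to $f=z-\tilde z_h$, whereas the paper estimates the difference of initial data via the exactness identity $P^k_h(\tilde z_h)=\tilde z_h$ together with the projection error bound $\|z-P^k_h z\|_0\leq c\,h^{k+1}$ — the two bookkeeping choices are interchangeable and yield the same bound.
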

\begin{proof}
	By \cref{lem:shift}, $\LBs [u]=(-\Delta_{\B})^s [u-z]$. Hence, the triangular inequality gives 
	\begin{equation*}
		\begin{split}
			\|(-\Delta_{\mathcal{B}})^{s}[u-z]-\Theta^s_h [u-\tilde{z}_h]\|_0&\leq \|(-\Delta_{\mathcal{B}})^{s}[u-z]-\Theta^s_h [u-z]\|_0\\
			&+\|\Theta^s_h [u-z]-\Theta^s_h [u-\tilde{z}_h]\|_0.
		\end{split}
	\end{equation*}
	Clearly, the result of \cref{thm:main} (for $k=1$) can be applied to obtain 
	\[
		\|(-\Delta_{\mathcal{B}})^{s}[u-z]-\Theta^s_h [u-z]\|_0\leq c_1 ~h^{p(1-s)}.
	\]
	On the other hand, 
	\[
	\|\Theta^s_h [u-z]-\Theta^s_h [u-\tilde{z}_h] \|_0\leq \frac{1}{|\Gamma(-s)|}\sum_{j=0}^{N_t} \left\| w_h(\cdot,t_j)-\tilde{w}_h(\cdot,t_j)-w_h(\cdot,0)+\tilde{w}_h(\cdot,0)\right\|_0 \beta_j ,
	\]
	To bound the right-hand side, we use the stability of the FE method (see e.g. \cite{QV2008} for details) with respect to the initial data, ensuring that for all $j\geq0$, 
	\[
		\|w_h(\cdot,t_j)-\tilde{w}_h(\cdot,t_j) \|_0 \leq \bar{c}~ \| w_h(\cdot,0)-\tilde{w}_h(\cdot,0)\|_0,
	\]
	where $\bar{c}=\bar{c}(\theta)$. At this point we use the linearity of the projection operator $P_h^k$ and the fact that, by definition, $P_h^k$ is exact on each element of the triangulation $\mathcal{T}_h$ for polynomial functions of degree $k$ or less. Hence, by construction, $P_h^k(\tilde{z}_h)=\tilde{z}_h$. Moreover, the accuracy of both the interpolant of $z$ and the FE approximation deriving from the solution of \eqref{eq:ellip} are of order $h^{k+1}$, so that 
	\begin{equation*}\label{blablabla}
		\begin{split}
			\|w_h(\cdot,0)-\tilde{w}_h(\cdot,0)\|_0&=\| P_h^k(u-z)-P_h^k(u-\tilde{z}_h) \|_0 =\| P_h^k(z)-\tilde{z}_h\|_0\\
			&\leq \| z-P_h^k(z)\|_0+ \| z-\tilde{z}_h\|_0\leq c_2~h^{k+1}.
		\end{split}
	\end{equation*}	
	Proceeding as in the previous sections, we combine all estimates and get
	\[
			\|\Theta^s_h [u-z]-\Theta^s_h [u-\tilde{z}_h]\|_0\leq c~ \|w_h(\cdot,0)-\tilde{w}_h(\cdot,0)\|_0 \sum_{j=1}^{N_t}\beta_j  \leq \tilde{c}~h^{k+1} \varDelta t^{-s}.
		\]
	The relationship between $\varDelta t$ and $ h$ given by \eqref{eq:restr} concludes the proof.
\end{proof}

\section{Generalisations}
\label{sec:genL}
The results of \cref{thm:main} and \cref{thm:HOmain} can be generalized not only to non-homogeneous boundary conditions but also to a wider class of operators coupled to homogeneous boundary conditions. 

Instead of the Laplacian operator $-\Delta$ one can consider a more general positive second order elliptic operator of the form
\[
L=\sum_{i,j=1}^n \partial_{x_i}(a_{ij}(x) \partial_{x_j})+ \sum_{i=1}^n b_i(x) \partial_{x_i}+c(x).
\]
Like in \cref{def:fraclap}, the fractional powers of the operator $L$ with the boundary condition $\B$ can be expresed as
\[
(L_\B)^su(x)=\frac{1}{\Gamma(-s)}\int_0^\infty \left( e^{-tL_\B}u(x)-u(x) \right)\frac{dt}{t^{1+s}},
\]
where $e^{-tL_\B}$ denotes the heat-semigroup associated to $L$ and $\B$. Analogous results to \cref{thm:main} and \cref{thm:HOmain} can be proved for the operator $(L_\B)^s$ provided that the bilinear form corresponding to the operator $L_\B$ in the associated variational formulation is weakly coercive (see e.g., \cite{RT1983,Salsa2008} for precise conditions on $L$ and $\B$), and that the coefficients $a_{i j}$, $b_j$, $c$, the domain $\Omega$ and the function $u$ ensure a sufficiently smooth $e^{-tL_\B}u(x)$. 

Regarding the boundary conditions, in this work, we have considered the case of Dirichlet, Neumann, and Robin boundary operators, assuming that a single type of condition is applied to the whole boundary $\partial \Omega$. However, more general boundary constraints (e.g., mixed conditions in different parts of the boundary) can be considered, provided that the properties of weak coercivity of the bilinear form and the regularity of $e^{-tL_\B}u(x)$ needed for the proof can be ensured. 

Details on these regularity conditions for both the operator $L$ and more general boundary conditions can be found in reference \cite{lions_etal-book1968b}.

\section{Numerical experiments}
\label{sec:experiments}
In order to illustrate the performance of our numerical methods we perform various numerical experiments and report here our results. 
The implementations of the methods are done in MATLAB (R2016a, The MathWorks Inc., Natick, Massachusetts, US). 

Let $h, \theta, k$ be fixed parameters of the FE method, and let $\varDelta t$, $N_t$ be chosen according to \cref{thm:main} and \cref{thm:HOmain}. We solve iteratively the linear system $P\mathbf{W}^{(j)}=Q\mathbf{W}^{(j-1)}$, where $P:=(M+\theta \varDelta t A)$ and $Q:=(M+(\theta-1)\varDelta t A)$ are the $N_h\times N_h$ square matrices of the FE scheme, we compute the difference vector $(\mathbf{W}^{(j)}-\mathbf{W}^{(0)})$, multiply it by the weights $\beta_j$, and sum all terms to finally obtain 
\begin{equation} \label{eq:sum}
\frac{1}{\Gamma(-s)}\left[\sum_{j=1}^{N_t}(\mathbf{W}^{(j)}-\mathbf{W}^{(0)})~\beta_j+(\mathbf{W_\infty}-\mathbf{W}^{(0)})\beta_\infty\right].
\end{equation}
Here $\mathbf{W}_\infty$ is the constant vector corresponding to the steady-state of the heat equation (which is known explicitly and depends on the boundary conditions), while $\mathbf{W}^{(0)}$ is the vector with coordinates given by $L^2$-projection of the function $u$ on the nodes of the spatial grid $\Omega_h$. 

To test the convergence of the discretizations, we use the fact that when $\varphi_m$ is a nomalized eigenfunction of $-\Delta$ with the boundary condition $\B$, then the $\B$-fractional Laplacian of $\varphi_m$ can be computed analytically according to  \eqref{eq:eigSerie} as $(-\Delta_\B)^s\varphi_m=\lambda_m ^{ s}\varphi_m$, where $\lambda_m$ is the eigenvalue corresponding to $\varphi_m$. 

\subsection{One dimensional case}
  On the interval $[0,L]$, the eigenfunctions and the corresponding eigenvalues of the Laplacian are known to be \cite{grebenkov_etal-siam-2013}:
	\[
	\begin{array}{lll}
	\varphi_m(x)=\sin\left(\frac{m\pi x}{L}\right) & \lambda_m=\left(\frac{m\pi}{L}\right)^2 & \textup{(Dirichlet case)}\\
	\varphi_m(x)=\cos\left(\frac{(m-1)\pi x}{L}\right) & \lambda_m=\left(\frac{(m-1)\pi}{L}\right)^2 & \textup{(Neumann case)}\\
	\varphi_m(x)=\sin\left(\frac{a_m x}{L}\right)+\frac{a_m}{\kappa L}\cos\left(\frac{a_m x}{L}\right) & \lambda_m=\left(\frac{a_m}{L}\right)^2 & \textup{(Robin case)}\\	
	\end{array}
	\]
	for $m\in \N$. The coefficients $a_m$ in the Robin case are the unique solution of
	\[
	\frac{2a_m}{\kappa L}\cos(a_m)+\left(1-\left(\frac{a_m}{\kappa L}\right)^2\right)\sin(a_m)=0 \quad \textup{for} \quad a_m\in[(m-1)\pi,m\pi].
	\]
	
For~\cref{fig:errAndSol} we select three values of $s\in (0,1)$, namely $s=0.25,0.5,0.75$, and compute the error $\|(-\Delta_{\mathcal{B}})^su-\Theta^s_h u\|_0$ as a function of the spatial discretization parameter $h$, for the Dirichlet, Neumann, and Robin boundary cases. The function $u$ is always a normalized eigenvalue of the corresponding operator, i.e., $u=\varphi_m/||\varphi_m||_0$, for some $m\geq 1$. All reported results were obtained with linear FE ($k=1$), an implicit temporal discretization scheme ($\theta=1$), and $\varDelta t=\eta~h^{k+1}$.

In the left column of~\cref{fig:errAndSol} we compare the expected behavior (dashed lines) predicted by our convergence result \cref{thm:main} with the error decay obtained numerically (solid lines) for different boundary conditions and different values of $s$. On the right, for each boundary condition, we show the exact value of $(-\Delta_\B)^su$ for the three values of $s$ considered. 

\begin{figure}
  \centering
  \subfigure[Error decay]{\includegraphics[width=0.45\textwidth]{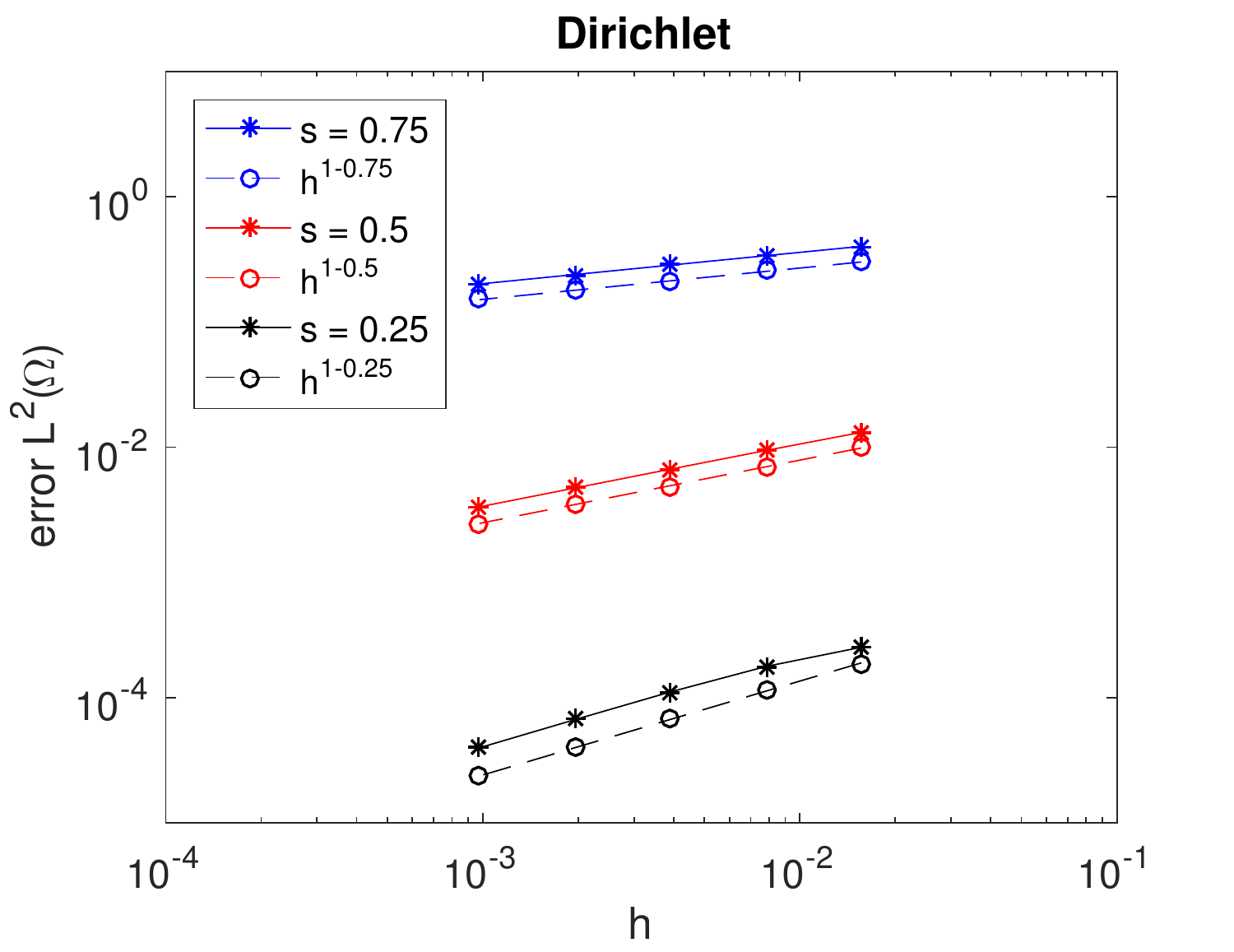}}
  \subfigure[Dirichlet fractional Laplacian, $m=1$]{\includegraphics[width=0.45\textwidth]{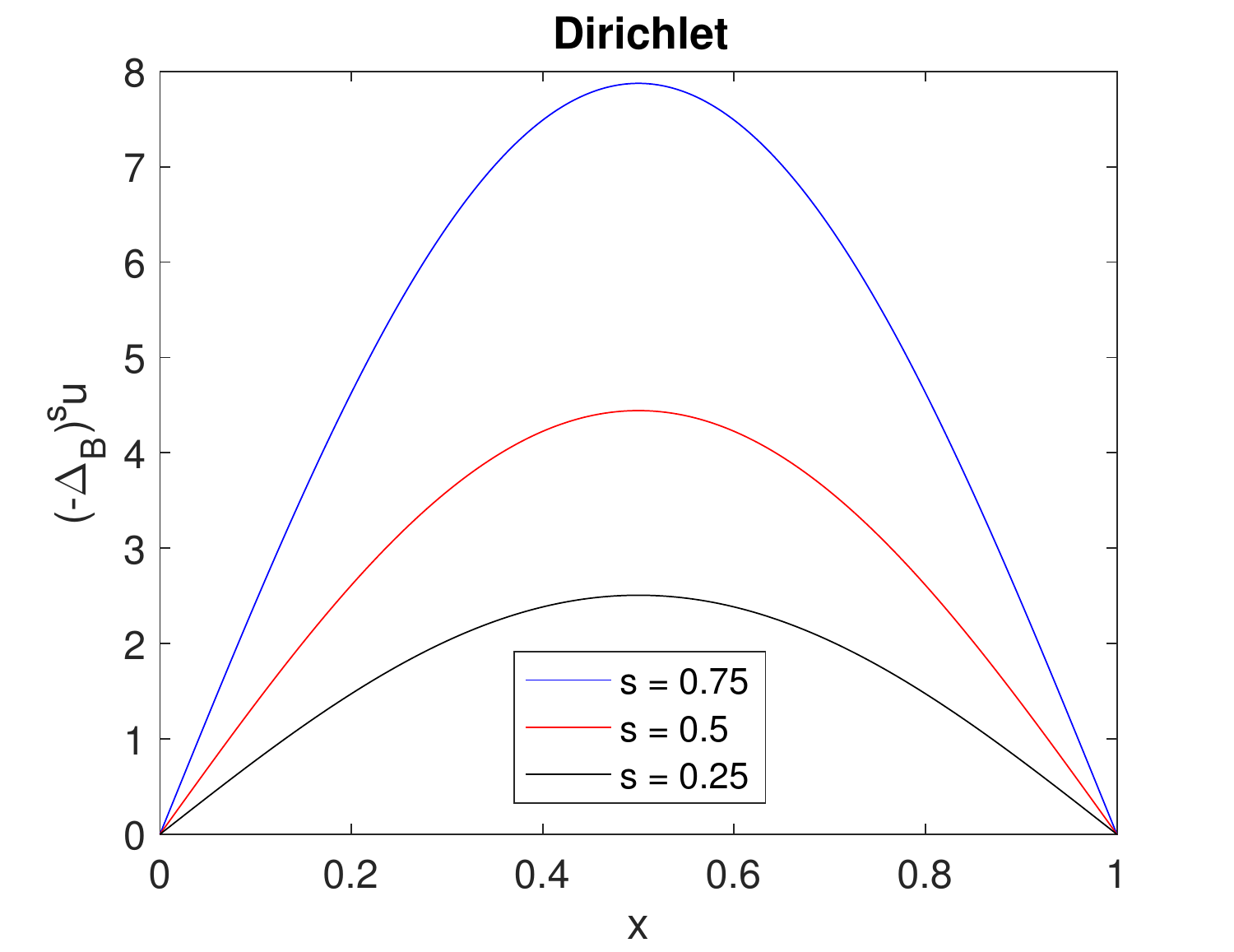}}
  \subfigure[Error decay]{\includegraphics[width=0.45\textwidth]{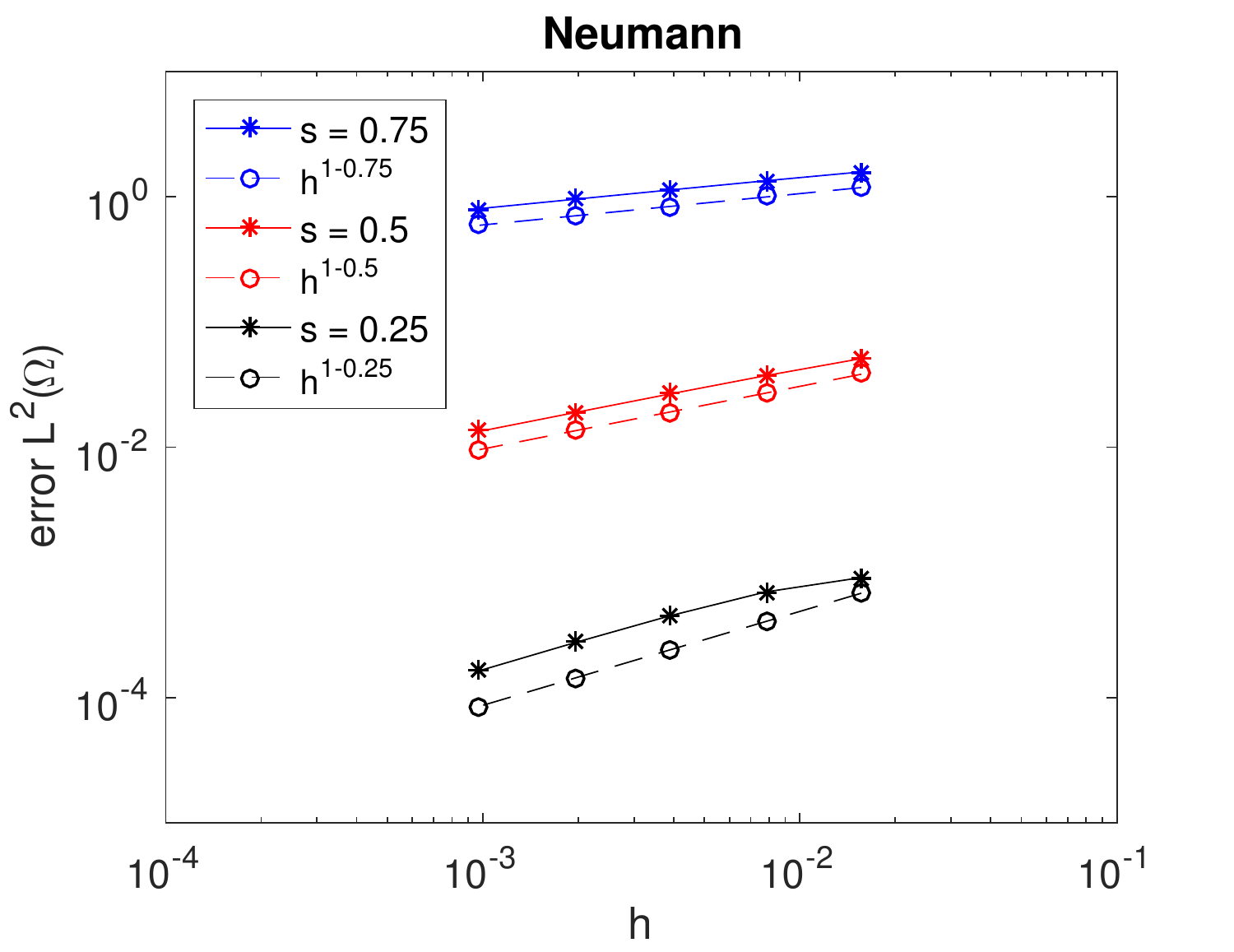}}
  \subfigure[Neumann fractional Laplacian, $m=3$]{\includegraphics[width=0.45\textwidth]{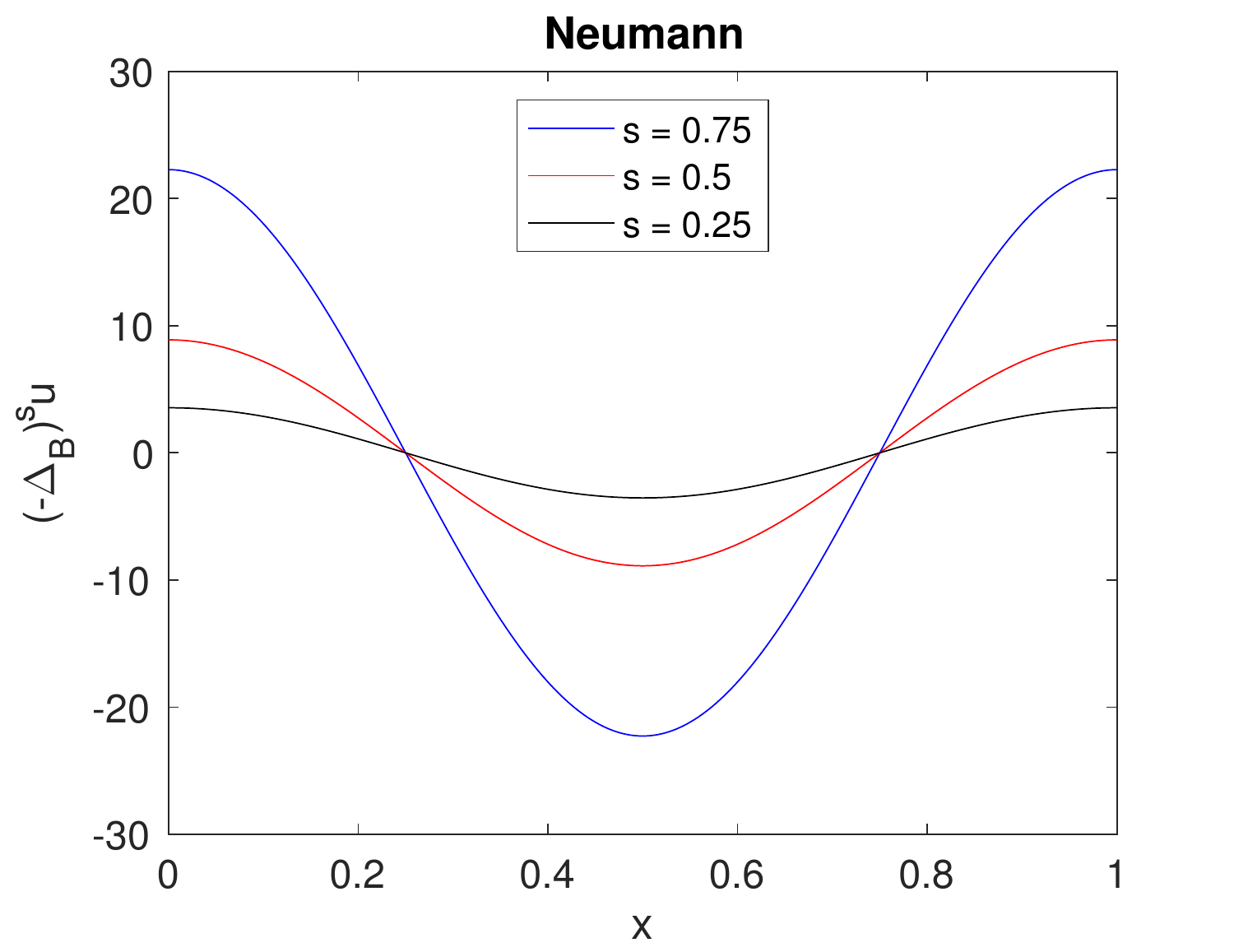}}
  \subfigure[Error decay]{\includegraphics[width=0.45\textwidth]{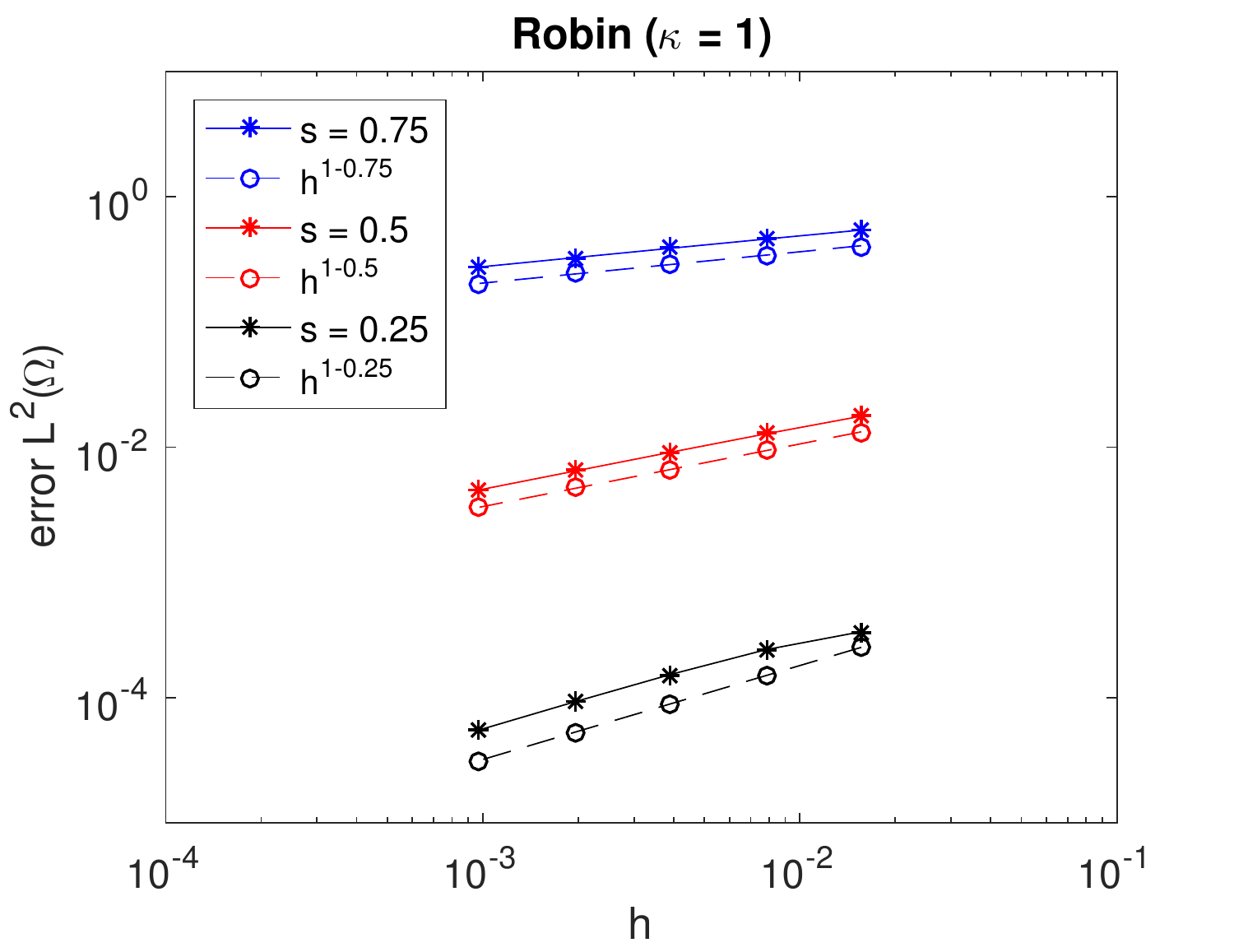}}
  \subfigure[Robin fractional Laplacian, $m=2$, $\kappa=1$]{\includegraphics[width=0.45\textwidth]{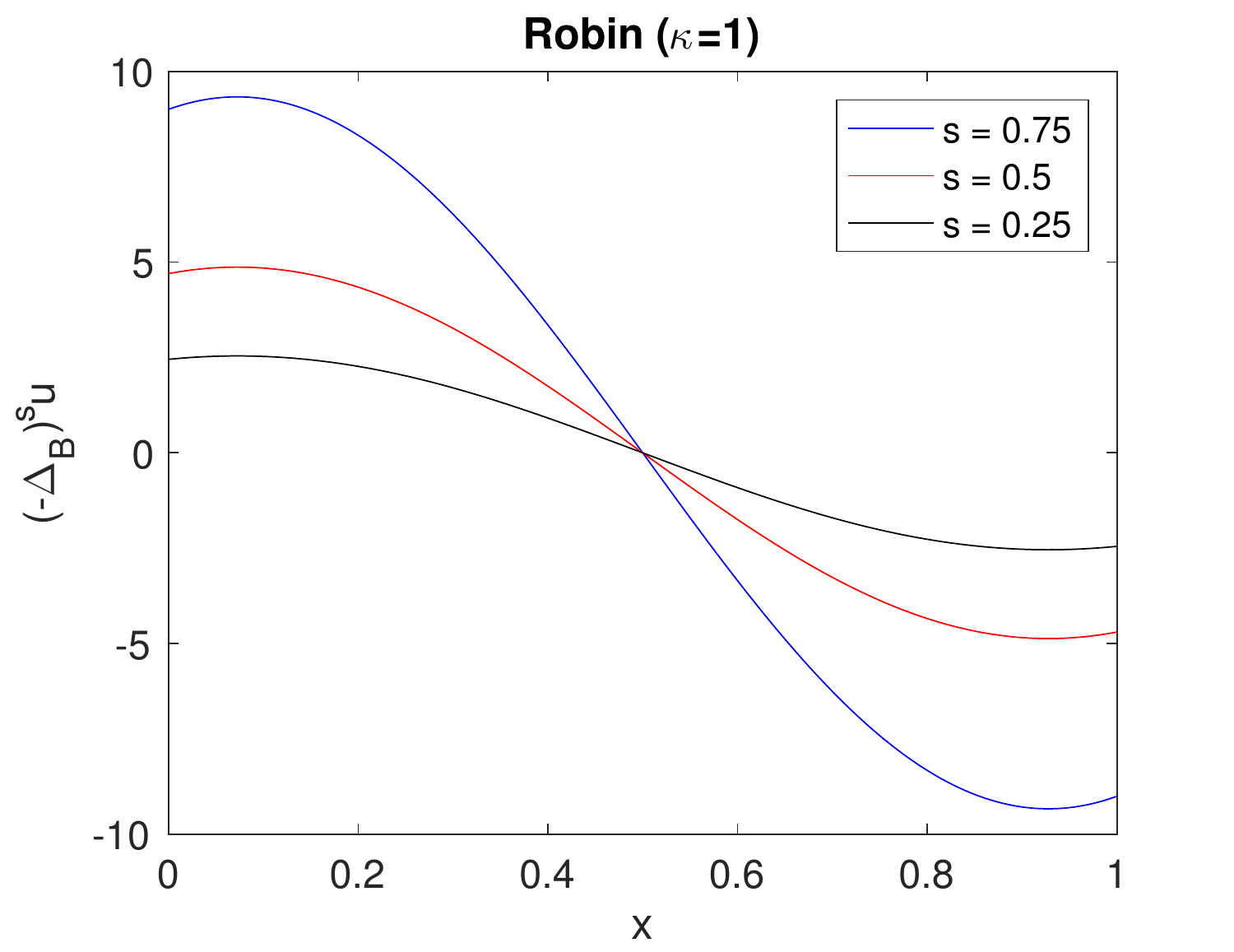}}
  \caption{Error convergence and $(-\Delta_\B)^su(x)$ on $\Omega=(0,1)$ for the Dirichlet (top row), Neumann (mid row), and Robin (bottom row) cases. In all cases $u=\varphi_m/||\varphi_m||_0$ is a normalized eigenfunction of $(-\Delta_\B)$. The value of $m$ is as indicated in (b), (d), and (f), respectively. Parameters of the experiment: $k=1$, $\theta=1$, $p=1$, $\eta=0.001$.} 
  \label{fig:errAndSol}
  \vspace{-0.5cm}
\end{figure}	

  In~\cref{fig:error1Dhighs} we show the improved convergence obtained for the same functions considered in the tests of \cref{fig:errAndSol} when the higher order discretization method described in \cref{sec:HO} is used. We observe that the convergence decay actually outperforms the trend predicted from our theoretical results for all values of $s$ and that the behavior improves as $s$ increases.

\begin{figure}
	\centering
	\subfigure{\includegraphics[width=0.45\textwidth]{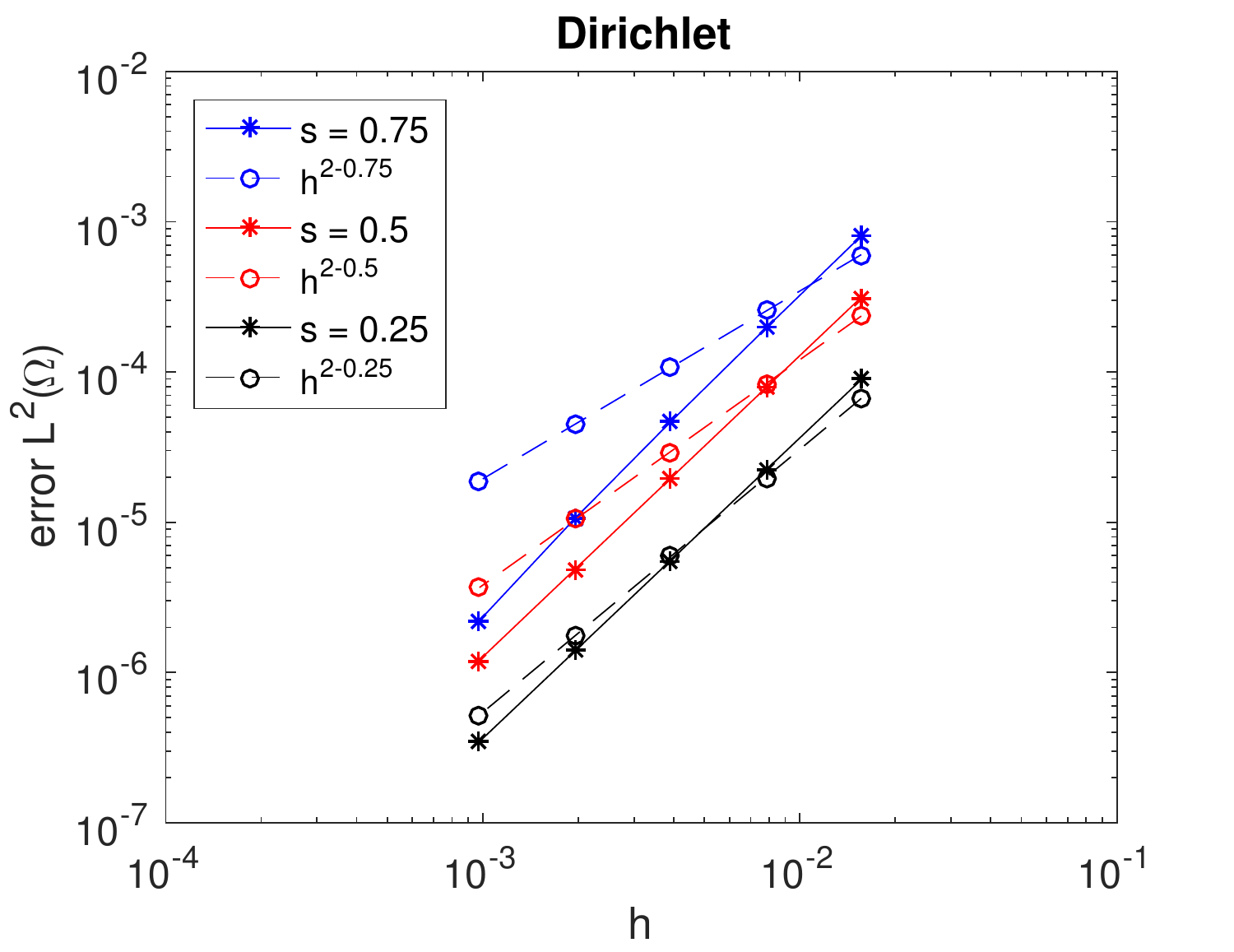}}
	\subfigure{\includegraphics[width=0.45\textwidth]{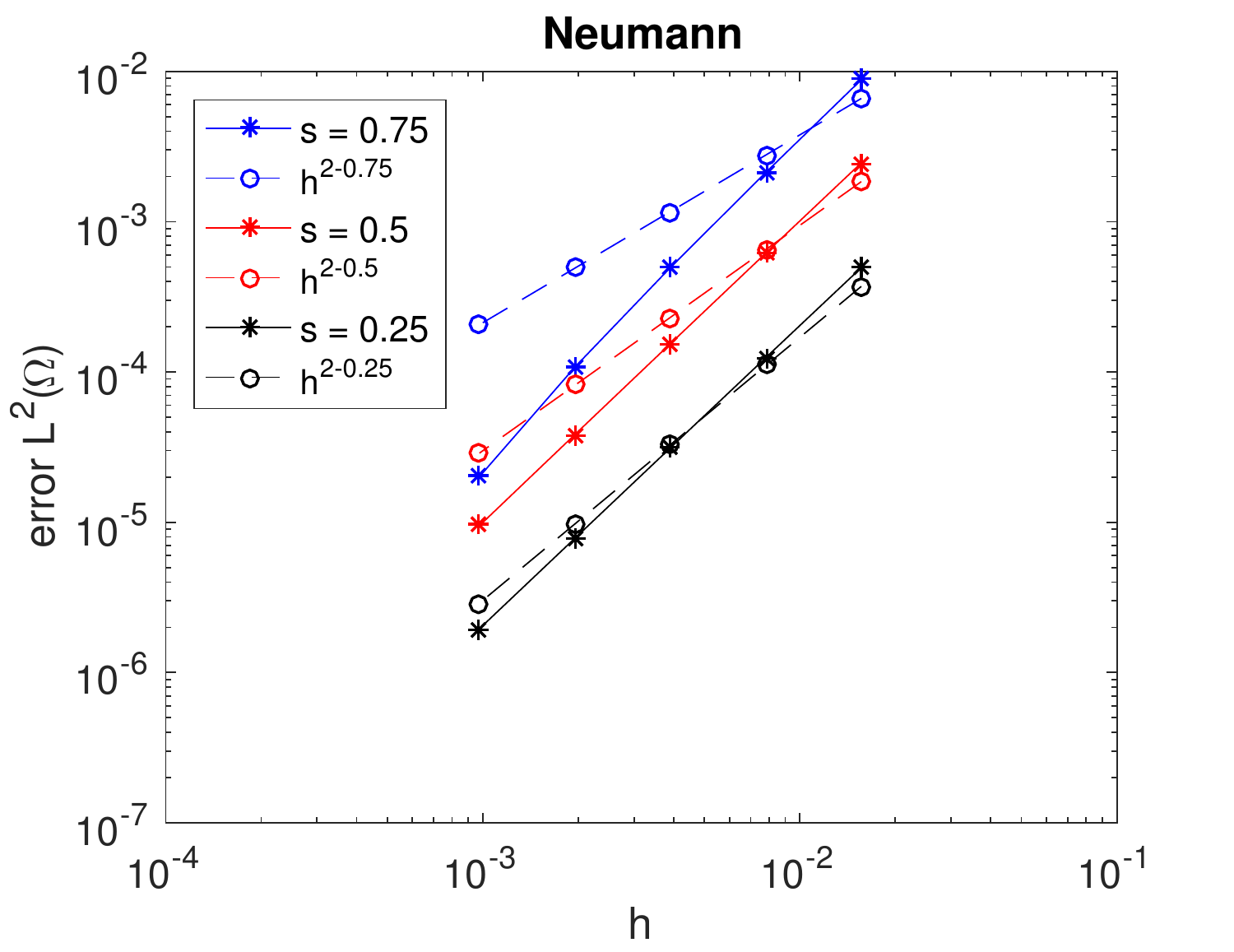}}
	\subfigure{\includegraphics[width=0.45\textwidth]{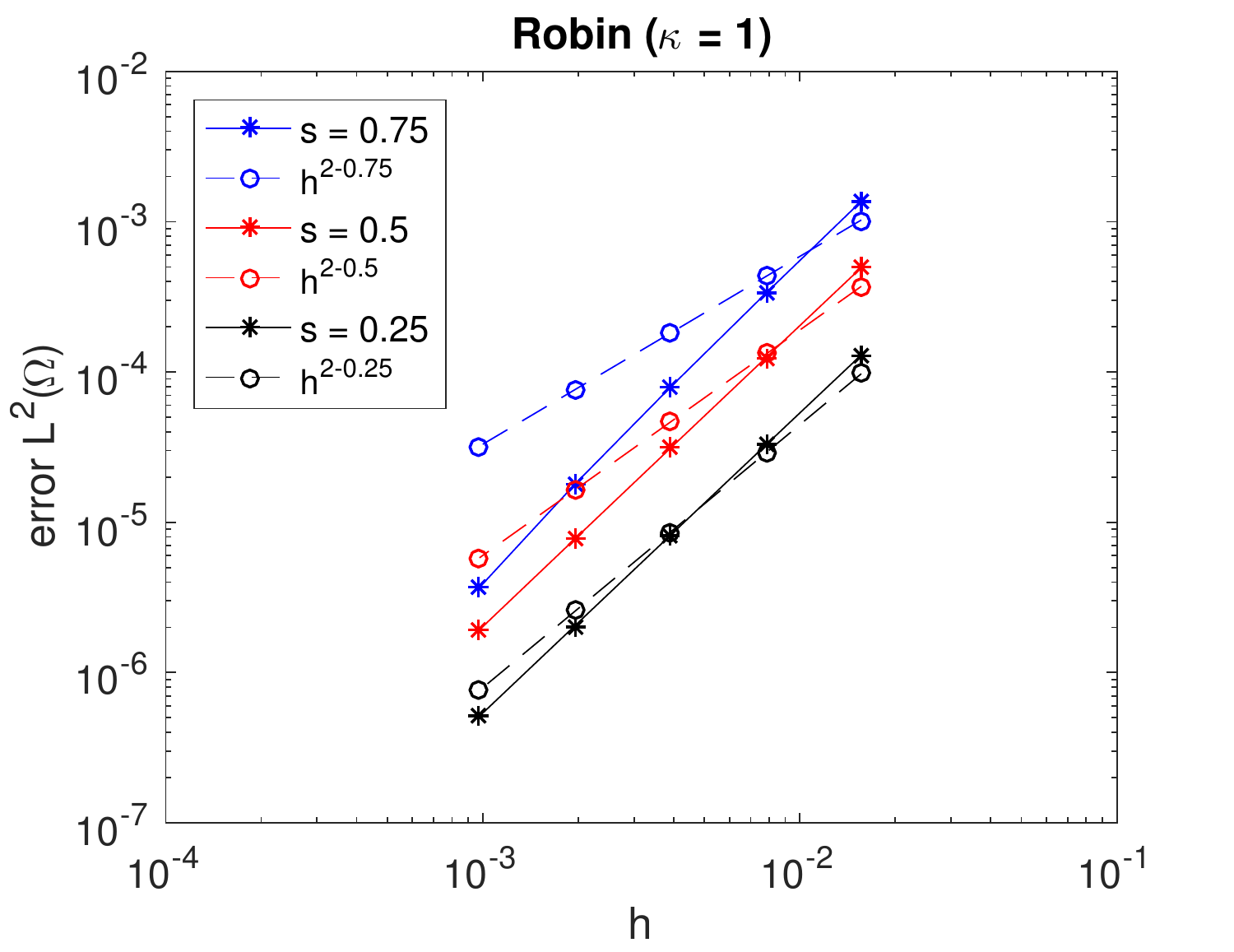}}
	\caption{Error convergence for $\B$-fractional Laplacian of the same eigenfunctions considered in~\cref{fig:errAndSol} on $\Omega=(0,1)$ computed with the higher order method, for values of $s=0.75,0.5,0.25$. Parameters of the experiment: $k=1$, $\theta=1/2$, $p=1$, $\eta=0.001$.}
	\label{fig:error1Dhighs}
\end{figure}

As mentioned various times in this work, the $\B$-fractional Laplacian given by \cref{def:fraclap} is a boundary-dependent nonlocal operator, that is, the operator changes when different boundary conditions $\B$ are considered. This can be seen by applying the operator for different choices of $\B$ to the same function $u$. Letting $\mathbbm{1}_D$ denote the indicator function of the set $D$, in ~\cref{fig:solExpCompare}, we consider a compactly supported function of the form $u(x)=e^{-1/(r^2-x^2)}\mathbbm{1}_{|x|<r}$, with $0<r<1$,
on $\Omega=(-1,1)$, and apply to it the $\B$-fractional Laplacian with homogeneous Dirichlet, Neumann, and Robin boundary conditions for different values of the fractional parameter $s$. The fractional Laplacian is computed via the higher order method.

\begin{figure}
	\centering
	\subfigure{\includegraphics[width=0.45\textwidth]{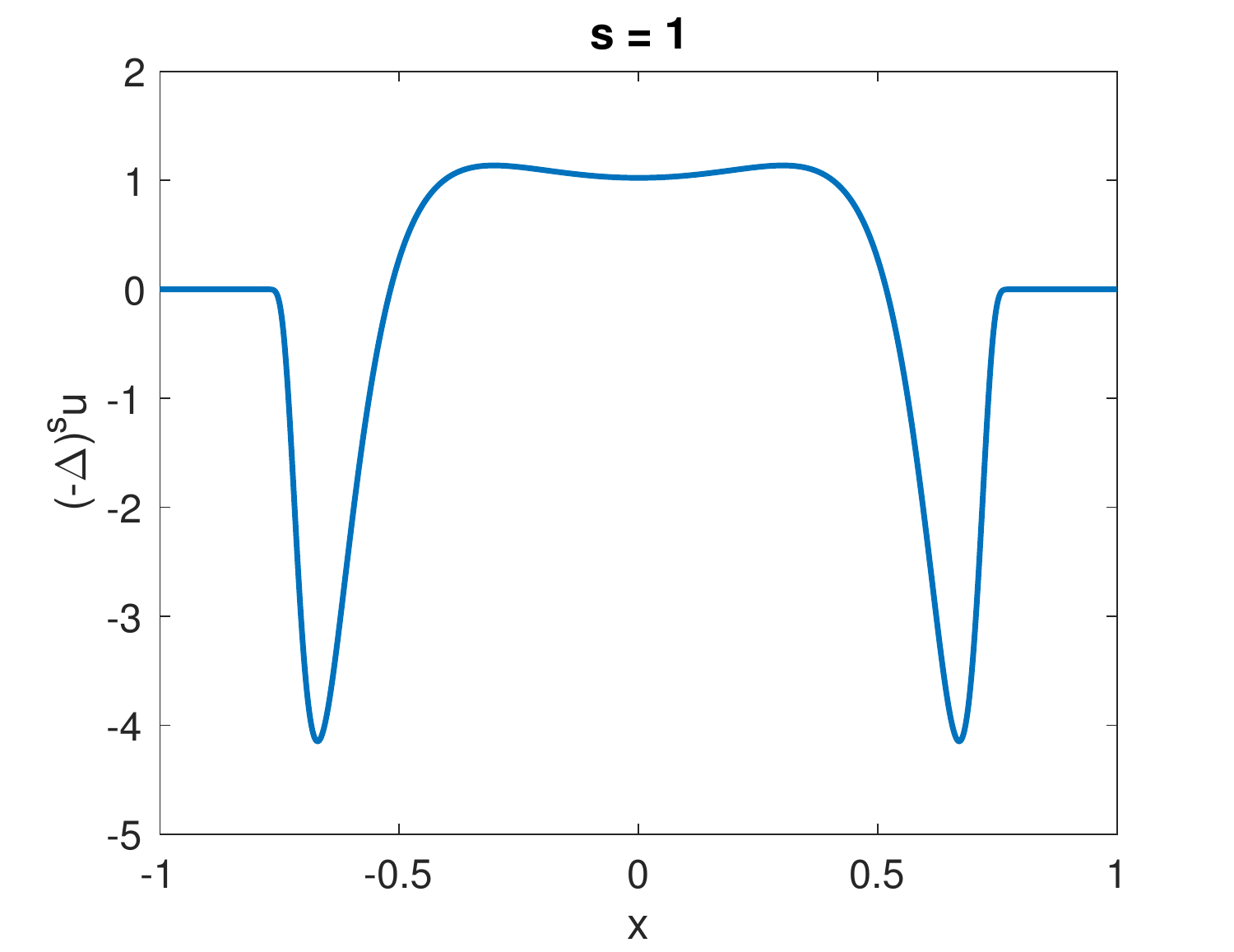}}
	\subfigure{\includegraphics[width=0.45\textwidth]{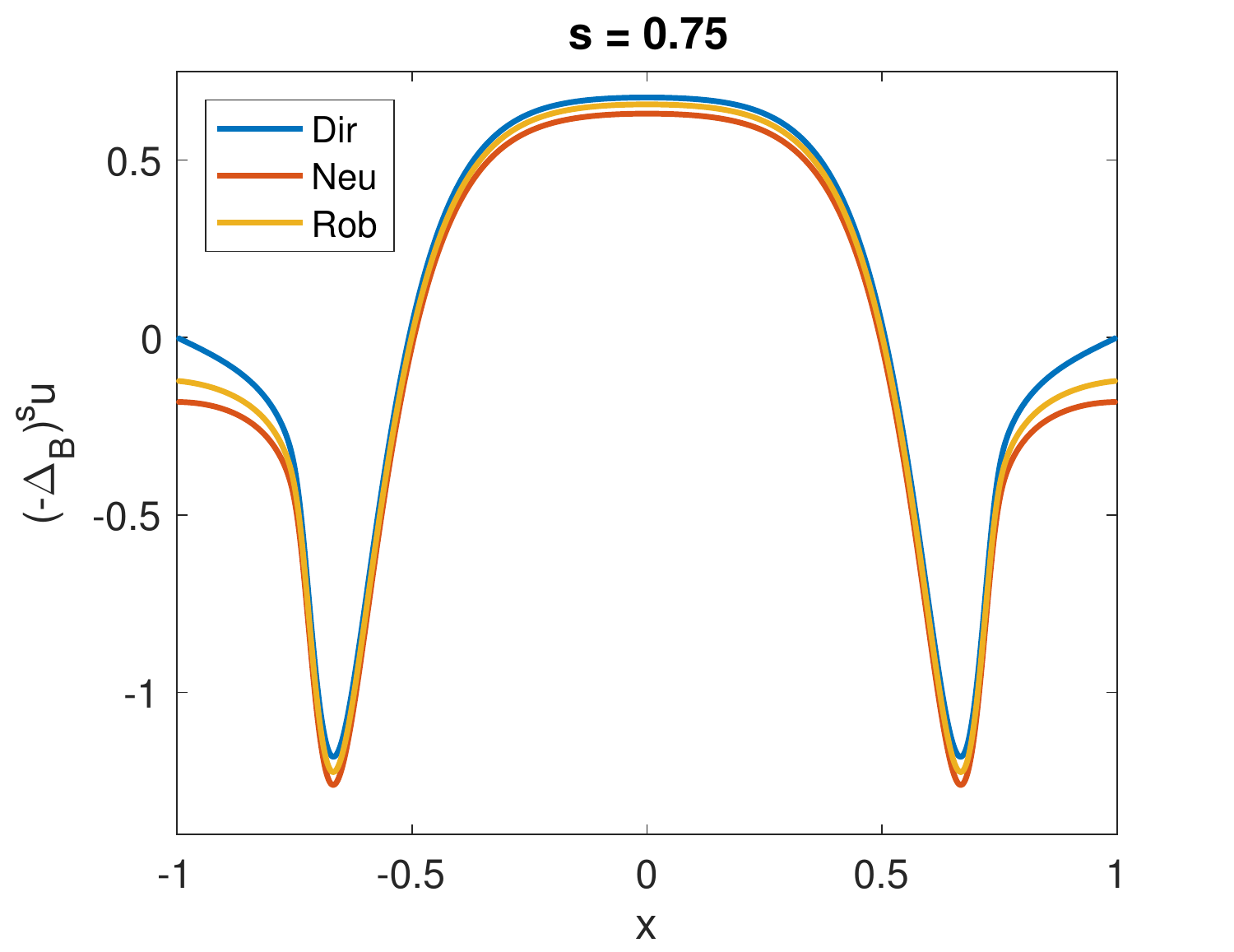}}
	\subfigure{\includegraphics[width=0.45\textwidth]{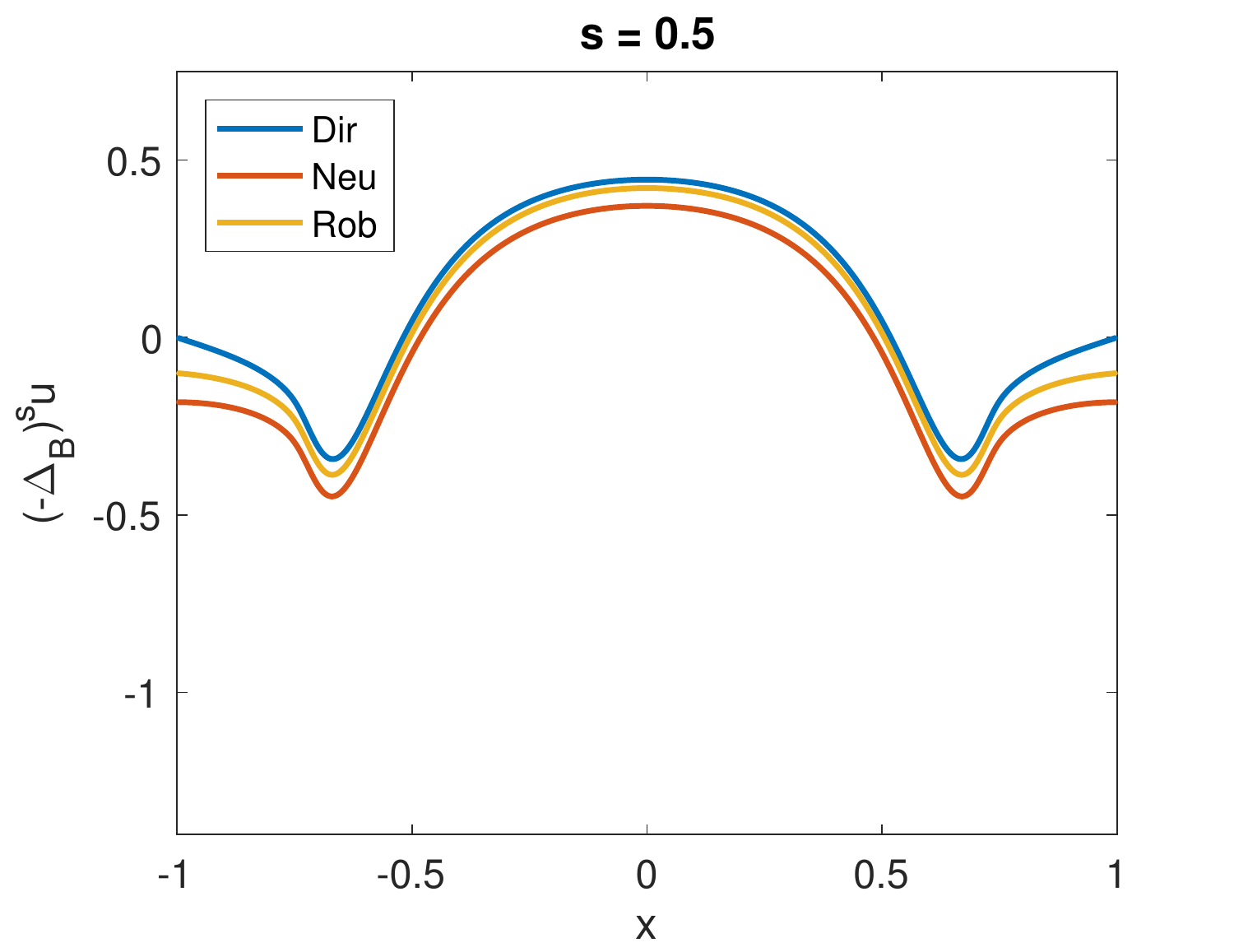}}
	\subfigure{\includegraphics[width=0.45\textwidth]{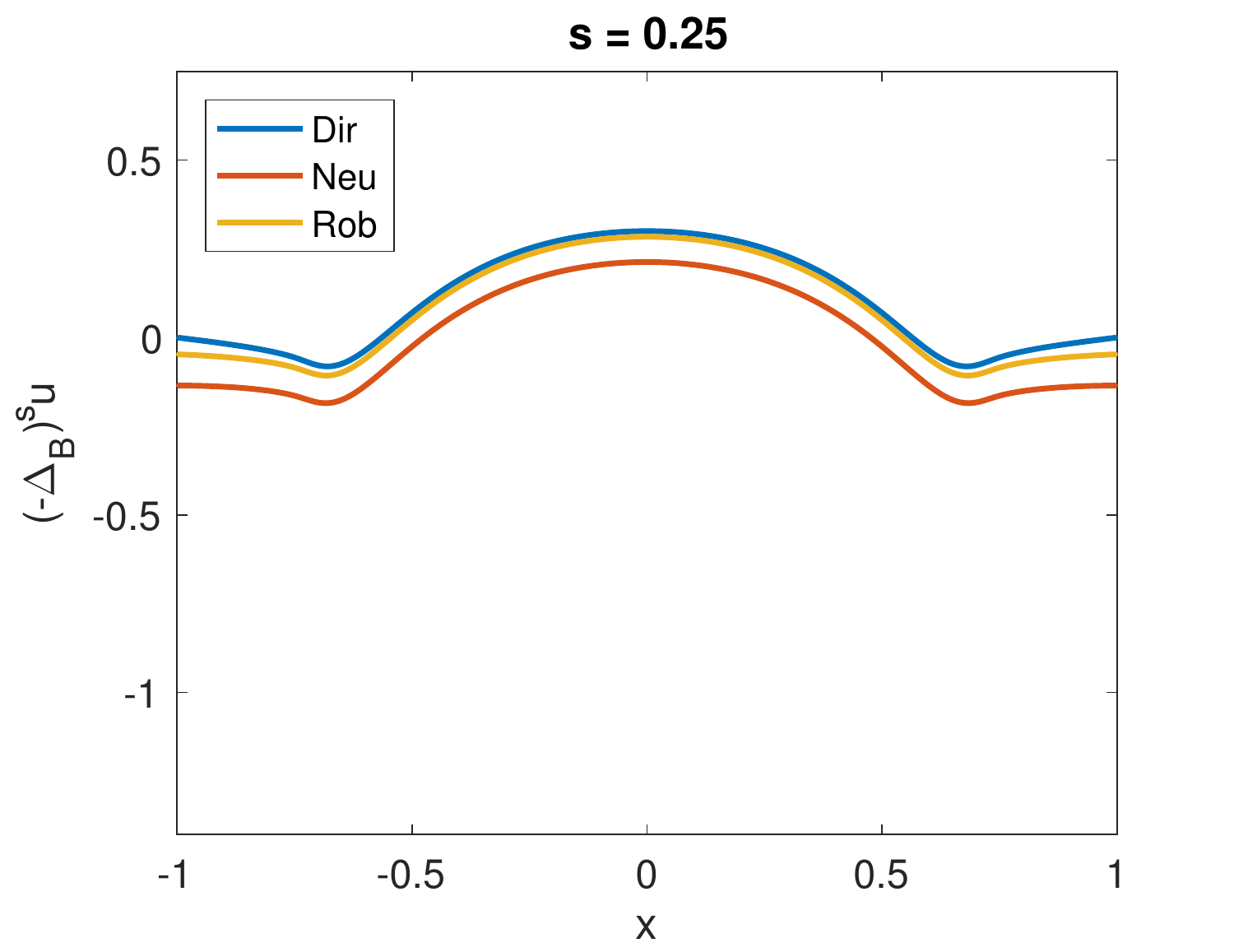}}
	\caption{Standard Laplacian ($s=1$) and $\B$-fractional Laplacian with different boundary conditions applied to the same function for $s=0.75, 0.5, 0.25$. $u(x)=exp(-1/(r^2-x^2))$ on $[-r,r]$, $r=0.8$, and zero elsewhere, $\Omega=(-1,1)$. Parameters of the experiment: $k=1$, $\theta=1/2$, $p=1$, $\eta=0.001$}.
	\label{fig:solExpCompare}
\end{figure}	

For comparison purposes, in the top left of \cref{fig:solExpCompare} we plot the standard Laplacian of the considered $u$, which is boundary independent. As we can see, the standard Laplacian of a compactly supported function is still a compactly supported function, whereas this does no longer hold in the purely fractional cases. The nonlocal character of the operator in fact affects the result and produces functions that are nonzero also on $\Omega\setminus [-r,r]$. Moreover, differences between the three nonlocal operators considered are clearly visualized for all choices of $s<1$.

\subsection{Two dimensional case}
The spectrum of the classical Laplacian is known analytically also in the two-dimensional case for some simple regular geometries, such as rectangle, disk, or ellipse (see \cite{grebenkov_etal-siam-2013} for further details). To prove convergence of our method in the two-dimensional case we exploit these analytical results and use them to compute $\|(-\Delta_{\mathcal{B}})^su-\Theta^s_h u\|_0$ as a function of the spatial discretization parameter $h$, for the usual three different boundary conditions $\B$, on the unit square $\Omega=(0,1)^2$. In this case, eigenfunctions are simply products of the one-dimensional eigenfunctions corresponding to the particular boundary conditions considered, namely
\[
\varphi_{ml}(x,y)=\varphi_m(x)\varphi_l(y),
\]
and the corresponding eigenvalues are given by the sum of the one-dimensional counterparts, that is, $\lambda_{ml}=\lambda_m+\lambda_l$.

The convergence results and the index of the particular eigenfunction $\varphi_{ml}$ used for the test are given in ~\cref{fig:error2Dsquare}.

\begin{figure}
	\centering
	\subfigure[$m=l=1$]{\includegraphics[width=0.45\textwidth]{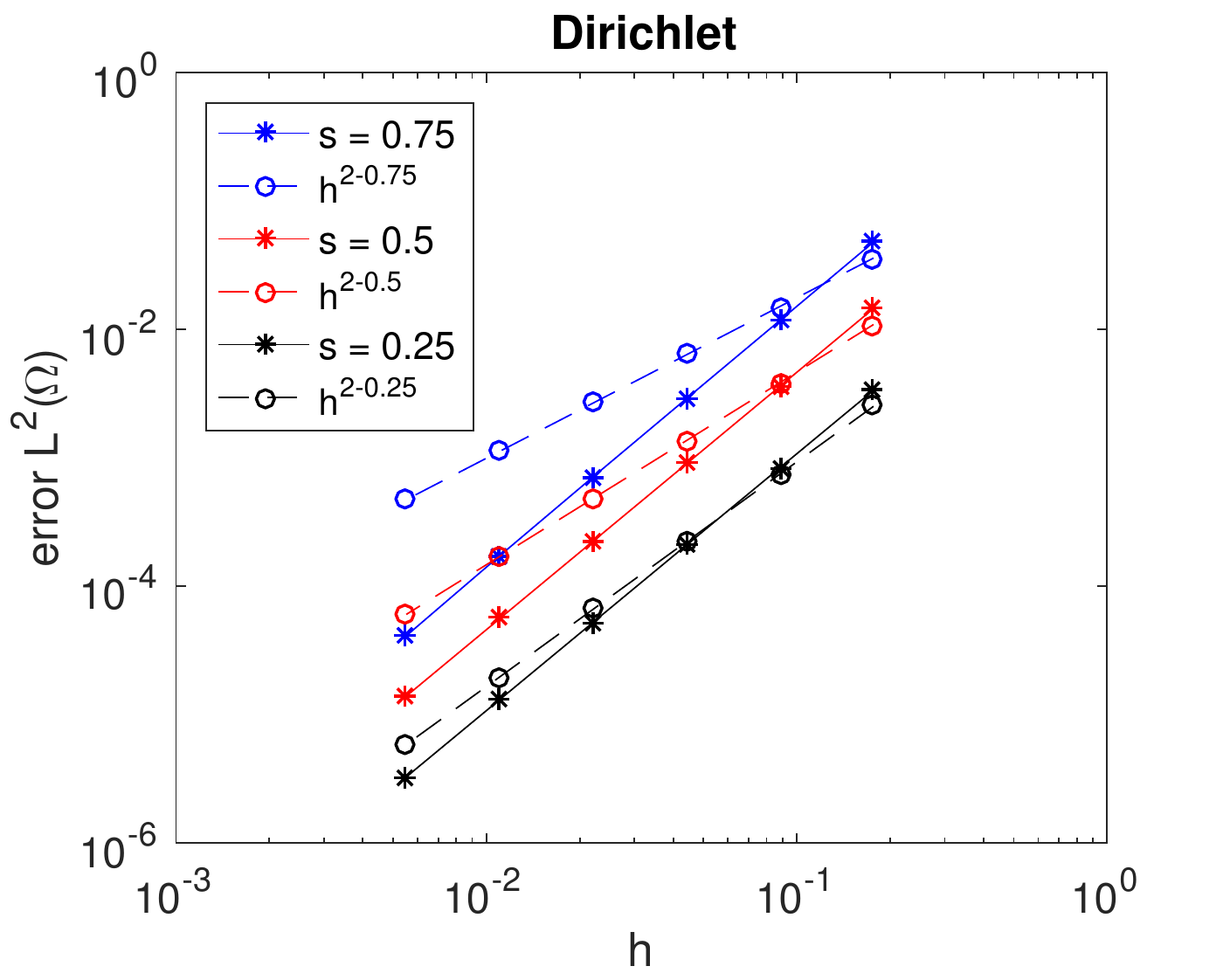}}
	\subfigure[$m=l=3$]{\includegraphics[width=0.45\textwidth]{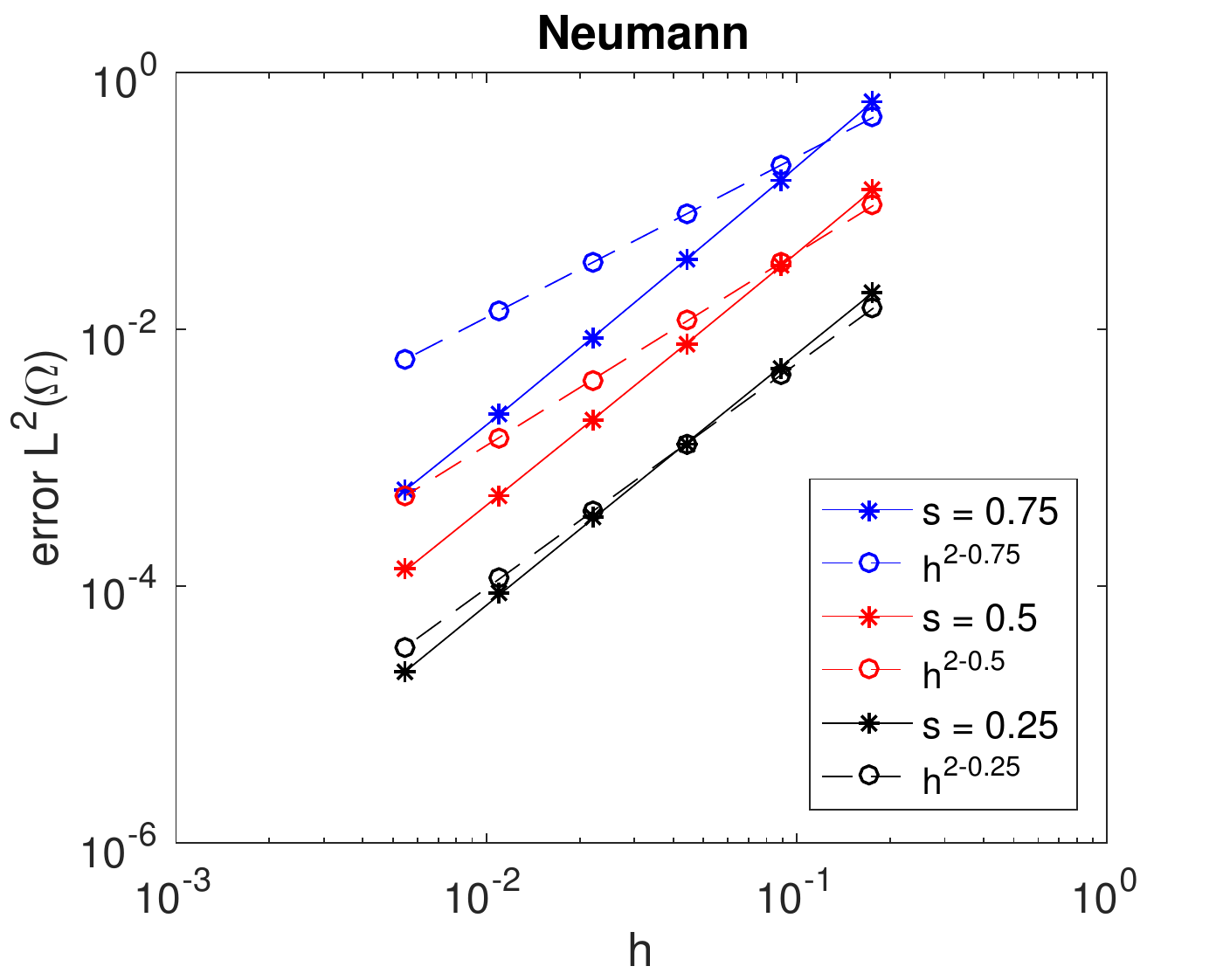}}
	\subfigure[$m=2,l=1$]{\includegraphics[width=0.45\textwidth]{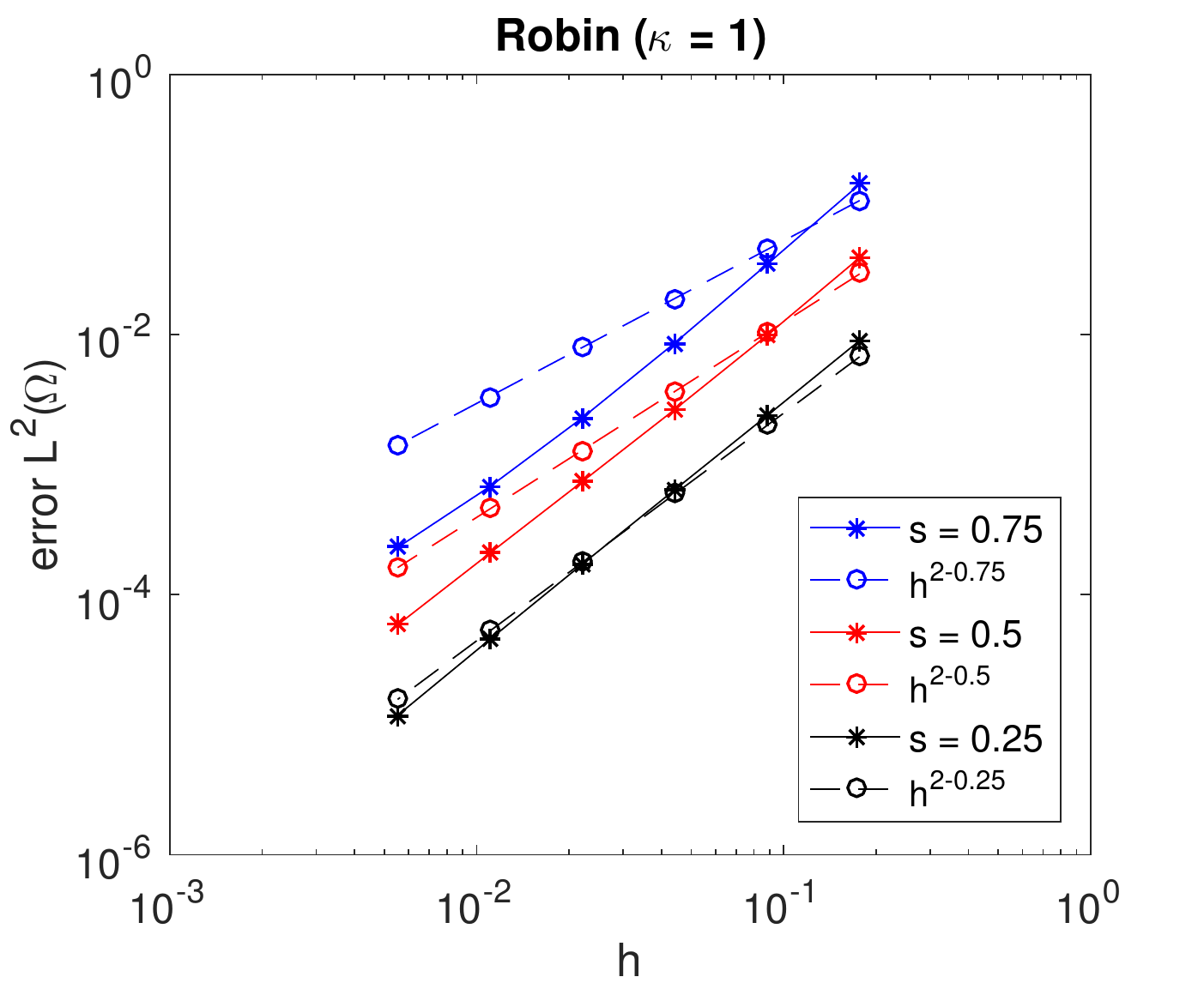}}
	\caption{Error convergence for $\B$-fractional Laplacian on the unit square. The indices of the considered eigenfunction $\varphi_{ml}$ are as indicated in (a), (b), (c). Parameters of the experiment: $k=1$, $\theta=1/2$, $p=1$, $\eta=0.001$}.
	\label{fig:error2Dsquare}
\end{figure}	

In \cref{fig:inhomogCase} we provide an example of the use of our method to compute the action of the fractional operator $\mathcal{L}^s_{g}$ on a function $u$ satisfying non-homogeneous Dirichlet boundary conditions on the unit square $\Omega=(0,1)^2$. Specifically, we set $g(x,y)=0.1\sin (2\pi x)\cos(\pi y)$ for $(x,y)\in \partial \Omega$. Wanting to study convergence of the method we let $\varphi:=u-z$ be a normalized eigenfunction of the homogeneous Dirichlet Laplacian on the unit square so that $(-\Delta_{\B})^s \varphi$ is known explicitly.

Not knowing the analytic expresssion of $z$, we compute its FE approximation $z_h$ and hence obtain the FE approximation $u_h=\varphi+z_h$ (satisfying non-homogeneous boundary conditions) to be used as initial datum for the approximate computation of $\LBs$. 
The solution is computed with the higher order method and the error once again decays faster than expected for all three values of $s$ considered. 

\begin{figure}
	\centering
	\subfigure{\includegraphics[width=0.45\textwidth]{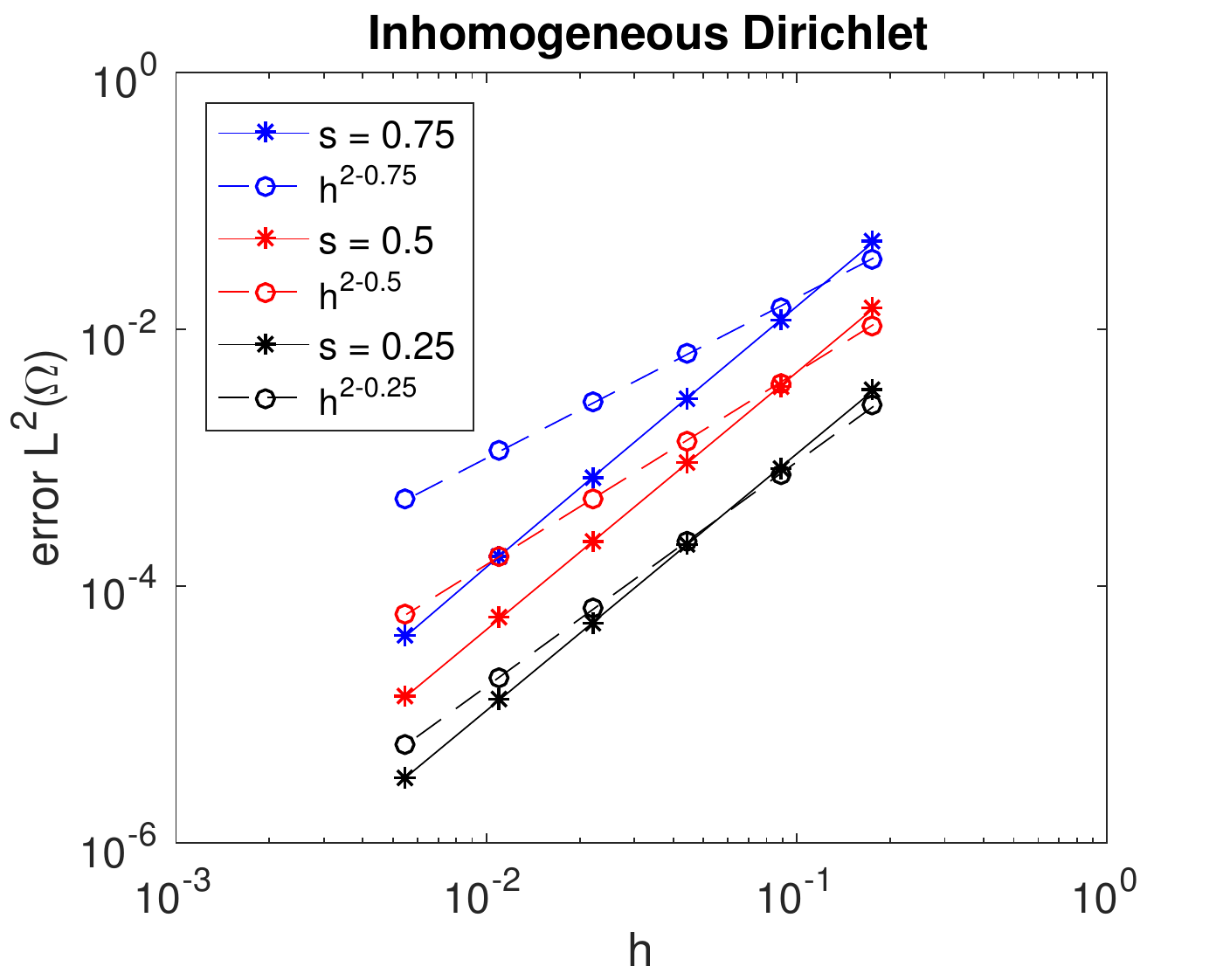}}
	\subfigure{\includegraphics[width=0.45\textwidth]{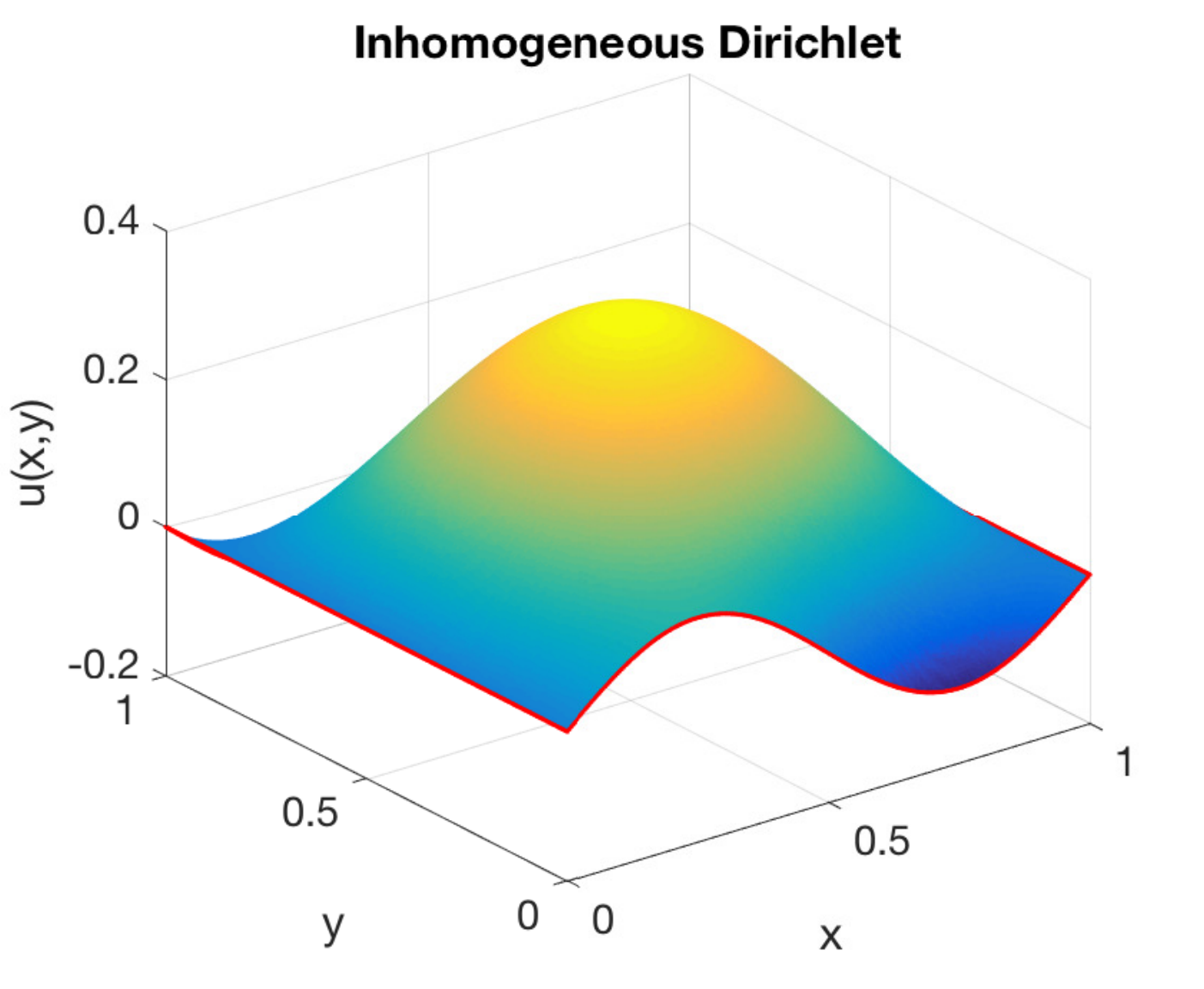}}
	\subfigure{\includegraphics[width=0.45\textwidth]{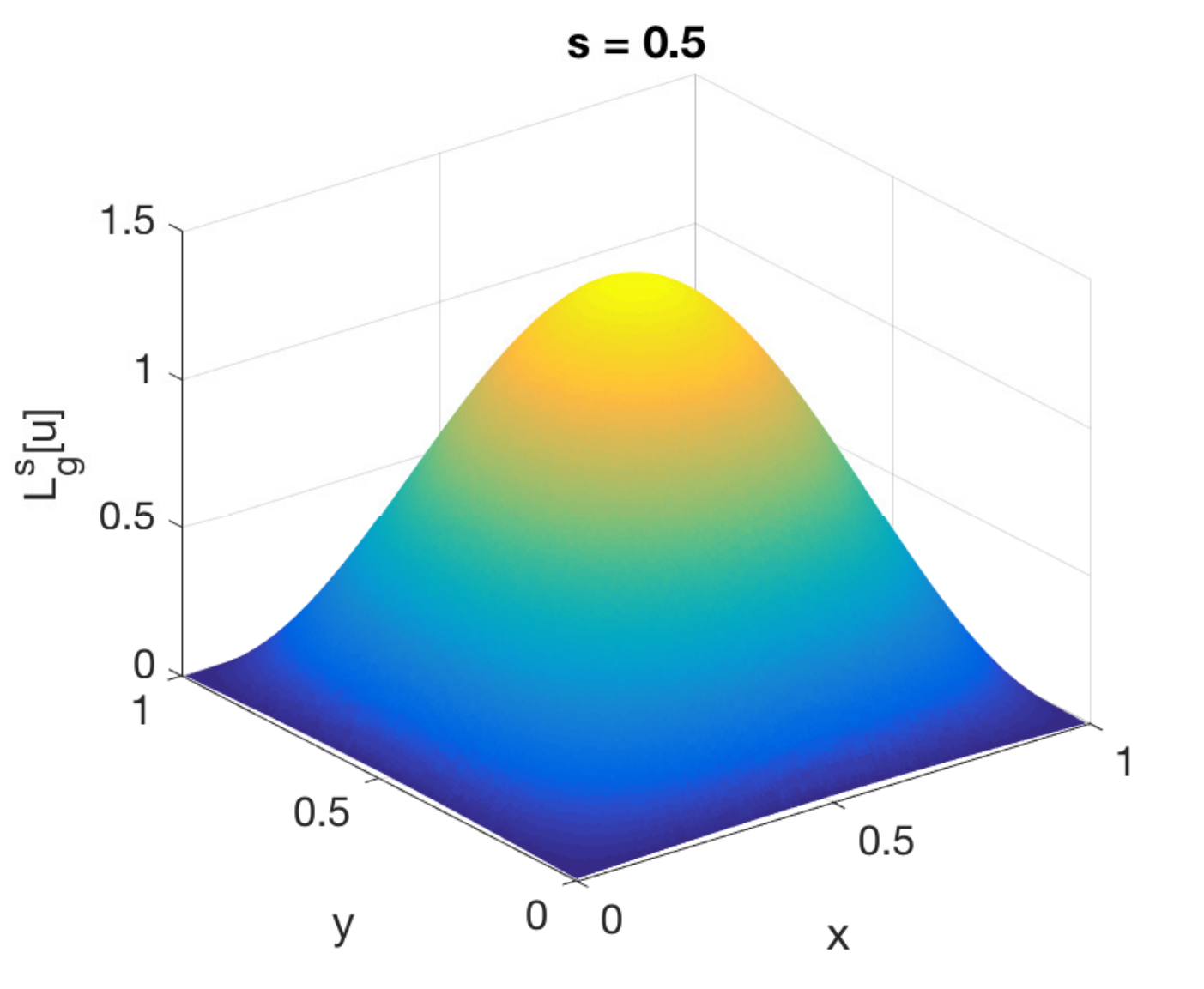}}
		\caption{Error convergence, function $u$, and nonlocal operator $\LBs $ with non-homogeneous Dirichlet boundary conditions applied to $u$ in the case $s=0.5$. The domain is the unit square. The boundary function is $g(x,y)=0.1\sin(2\pi x) \cos(\pi y)$. Boundary nodes are highlighted in red to help with visualization. Parameters of the experiment: $k=1$, $\theta=1/2$, $p=1$, $\eta=0.001$.}
	\label{fig:inhomogCase}
\end{figure}

\subsubsection{Polygonal domain}
As mentioned in the introduction, one could compute the $\B$-fractional Laplacian on a bounded domain $\Omega$ via equation \eqref{eq:spectraldef}. However, the analytic expression of the required eigenpairs is not known for general domains and using \eqref{eq:spectraldef} would imply that the solution of a very large number of eigenvalue problems must be computed.
	
One of the main advantages of our approach and of the use of the numerical method proposed in this work is that with the same strategy we can handle boundary conditions that are not necessarily homogeneous Dirichlet conditions and at the same time we do not need to know the eigendecomposition of the Laplacian on the given domain to compute $(-\Delta_\B)^su$. Exploiting \eqref{eq:semigr} and due to the versatility of the FE method in handling general geometries, we are able to compute $(-\Delta_\B)^su$ on domains such as the one considered in \cref{fig:Comp2DIrregDom}, for which the eigenpairs are analytically unknown.
Although in our theoretical results, the choice of $N_t$ depends on the first nonzero eigenvalue of the Laplacian, clearly the same results hold if a lower bound is used for the considered eigenvalue. These bounds are readily available and we refer the reader to Section 4.2 of \cite{grebenkov_etal-siam-2013} and references therein for a thorough discussion on these bounds for general domains and homogeneous Dirichlet, Neumann, and Robin boundary conditions. An alternative approach not using bounds for the first non-trivial eigenvalue simply consists in continuing to add terms to the quadrature sum in \eqref{eq:sum} until the difference $(\mathbf{W}^{(j)}-\mathbf{W}^{(0)})$ reaches a steady value, up to a given tolerance. The latter approach is the one we adopted for the computations presented in \cref{fig:Comp2DIrregDom}.

\begin{figure}
	\centering
	\includegraphics[width=\textwidth]{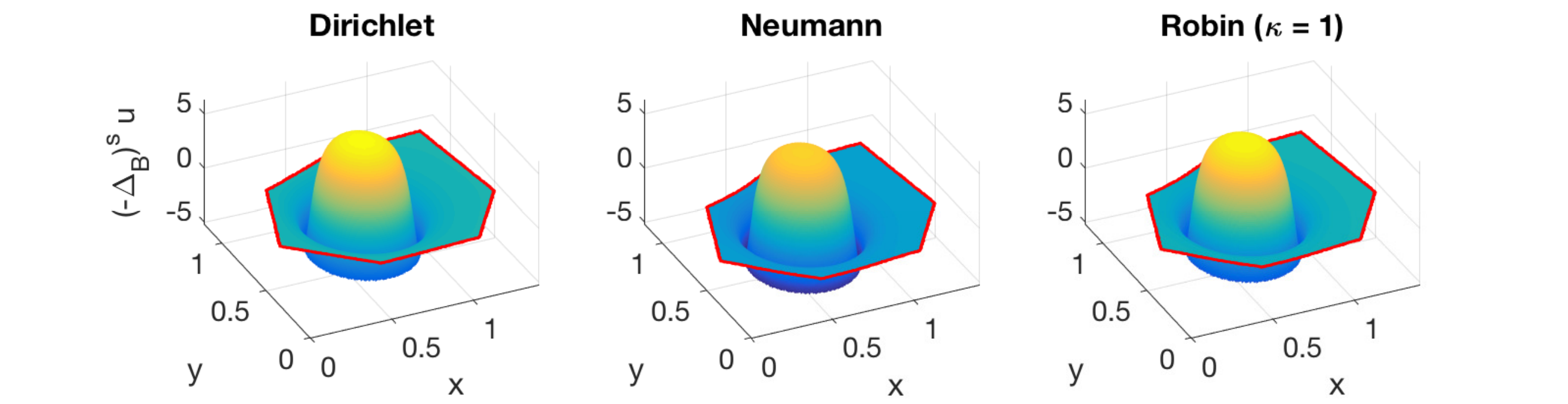}
	\includegraphics[width=\textwidth]{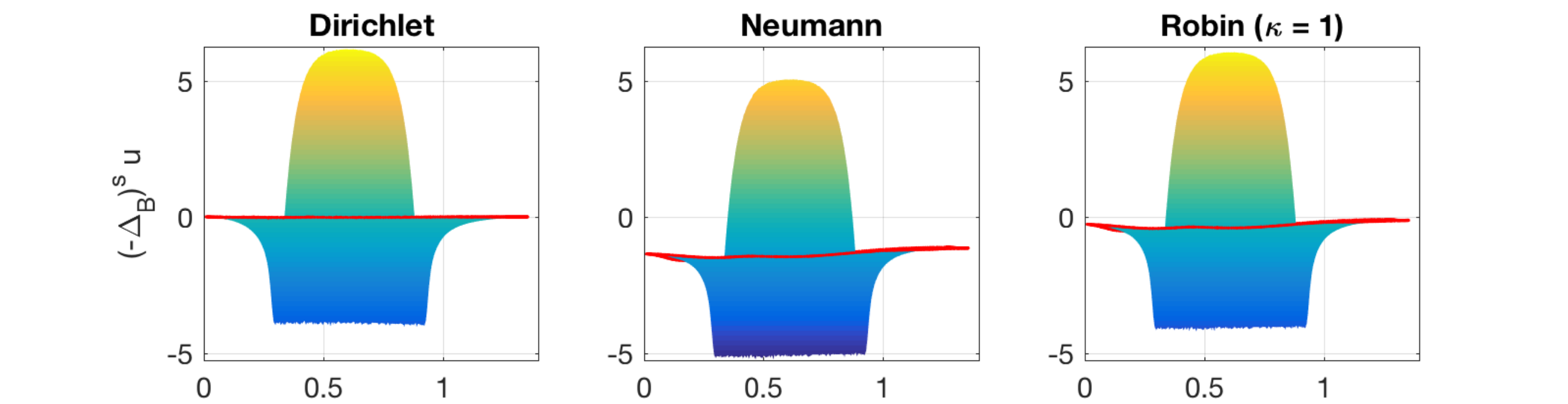}
	\includegraphics[width=\textwidth]{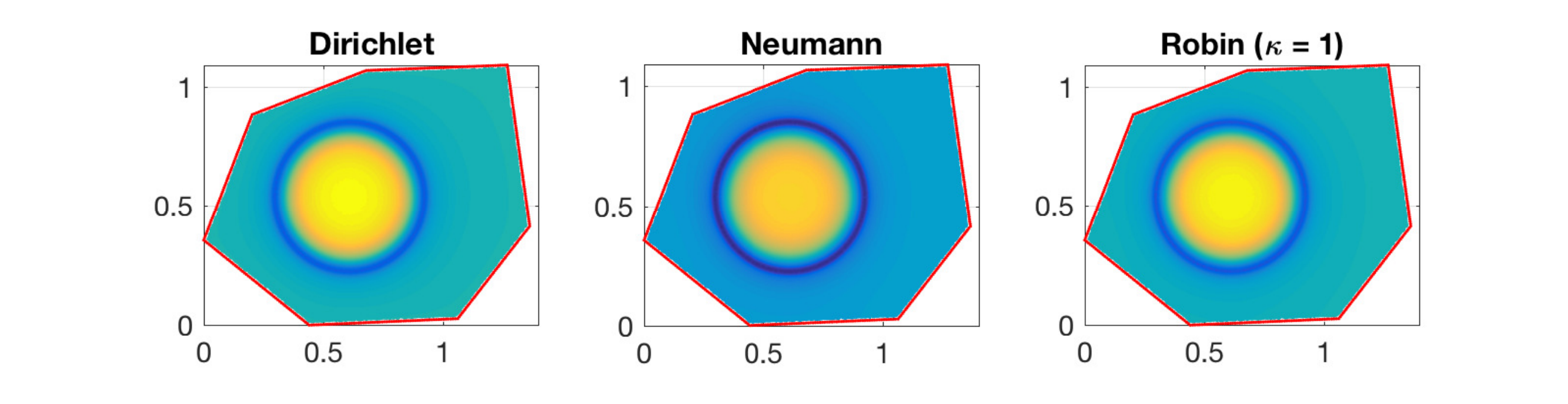}
	\caption{Fractional Laplacian ($s=0.5$) with different boundary conditions applied to the same function on convex polygonal domain. Top row: view with an angle. Middle row: side view. Bottom row: top view. Boundary nodes are highlighted in red to help with visualization. Parameters of the experiment: $r=0.35$, $x_0=0.61$, $y_0=0.54$, $k=1$, $\theta=1/2$, $p=1$, $\eta=0.001$}.
	\label{fig:Comp2DIrregDom}
\end{figure}	

Once again the value of the $\B$-fractional Laplacian with $s=0.5$ is computed for the compactly supported function $$u(x,y)=e^{-((x-x_0)^2+(y-y_0)^2)/(r^2-((x-x_0)^2+(y-y_0)^2))}\mathbbm{1}_{\{((x-x0)^2+(y-y0)^2)<r^2\}}$$
with homogeneous Dirichlet, Neumann, and Robin boundary constraints. Different views of $(-\Delta_\B)^su$ are reported to help in visualizing the differences (especially in the behavior close to the boundary) between these three cases. 
	
\subsection{An explicit scheme for a fractional porous medium type equation}	
As a possible application of our method, we compute the numerical approximation of an evolution problem involving the homogeneous Dirichlet fractional Laplacian. Specifically, on a given bounded domain $\Omega$ and for $\tau \geq 0$, we consider the fractional porous medium type equation 
\begin{equation} \label{eq:pormed}
\partial_\tau u+(-\Delta_\B)^s (u^m)=0,
\end{equation}
where $m$ is a suitable integer, $m>1$. For a precise presentation of the theoretical aspects regarding \eqref{eq:pormed} we refer to \cite{bonforte_etal-arxiv-2017,BoSiVa15} and references therein.

Given an initial condition $u(x,0)=u^{(0)}(x)$, we use an explicit discretization scheme for the evolution variable $\tau$ with uniform step $\varDelta \tau>0$ and compute iteratively the numerical approximation of the solution to \eqref{eq:pormed} at $\tau_n= (n+1) \varDelta \tau$, $n\in \N$ as
\begin{equation}\label{eq:numpormed}
u^{(n+1)}=\varDelta \tau \left[ u^{(n)}+\Theta_h^s [(u^{(n)})^m] \right].
\end{equation}
As \eqref{eq:numpormed} is an explicit scheme, it may suffer from numerical instabilities when the time step $\varDelta \tau$ is not properly chosen. We circumvent the issue by imposing on $\varDelta \tau$ a CFL-type condition and setting $\varDelta \tau= h^{2s}/m$. More details about these facts can be found in \cite{dTEnJa18} dealing with \eqref{eq:pormed} in $\R^N$.

In \cref{fig:FPM_sim} we report the results obtained for two different combinations of the parameters $m$ and $s$, at the indicated time point $\tau$. In both cases, the spatial domain is $\Omega=(-1,1)$, while the initial condition (plot on the left in \cref{fig:FPM_sim}) is the compactly supported function $u^{(0)}(x)=e^{4-1/[(0.5-x)(0.5+x)]}\mathbbm{1}_{|x|<0.5}$. 
\begin{figure}
	\centering
	\subfigure{\includegraphics[width=0.3\textwidth]{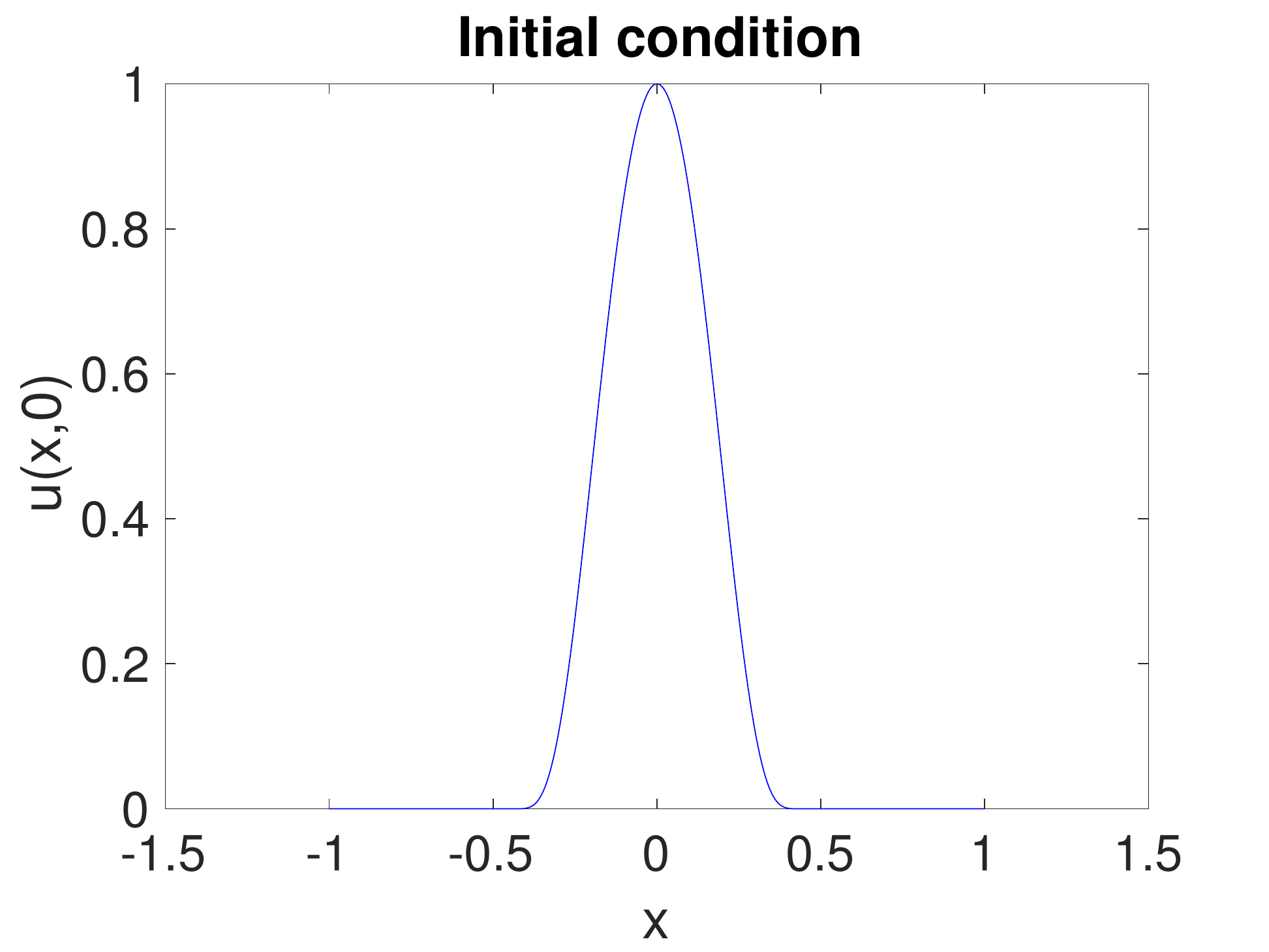}}
	\subfigure{\includegraphics[width=0.3\textwidth]{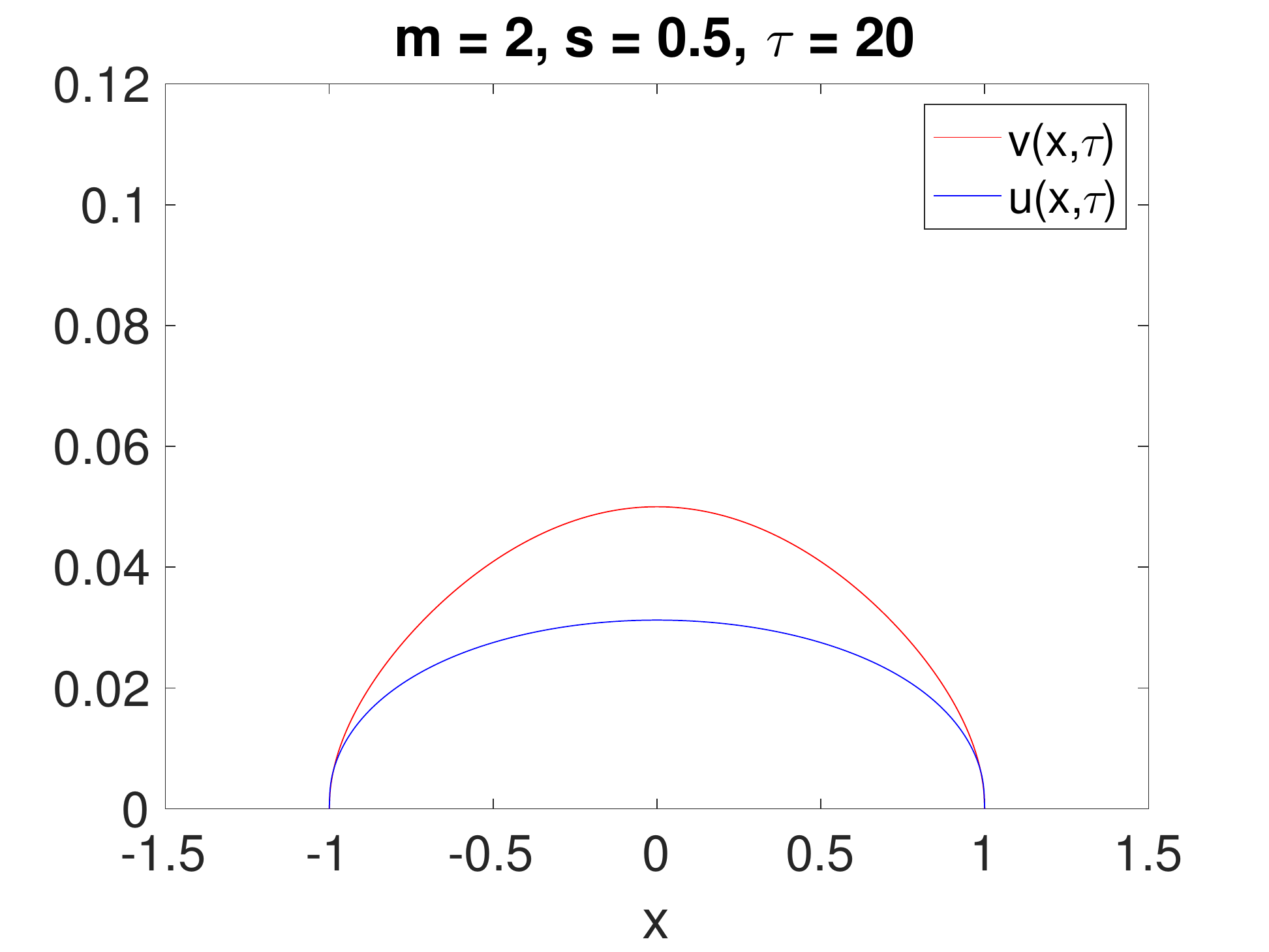}}
	\subfigure{\includegraphics[width=0.3\textwidth]{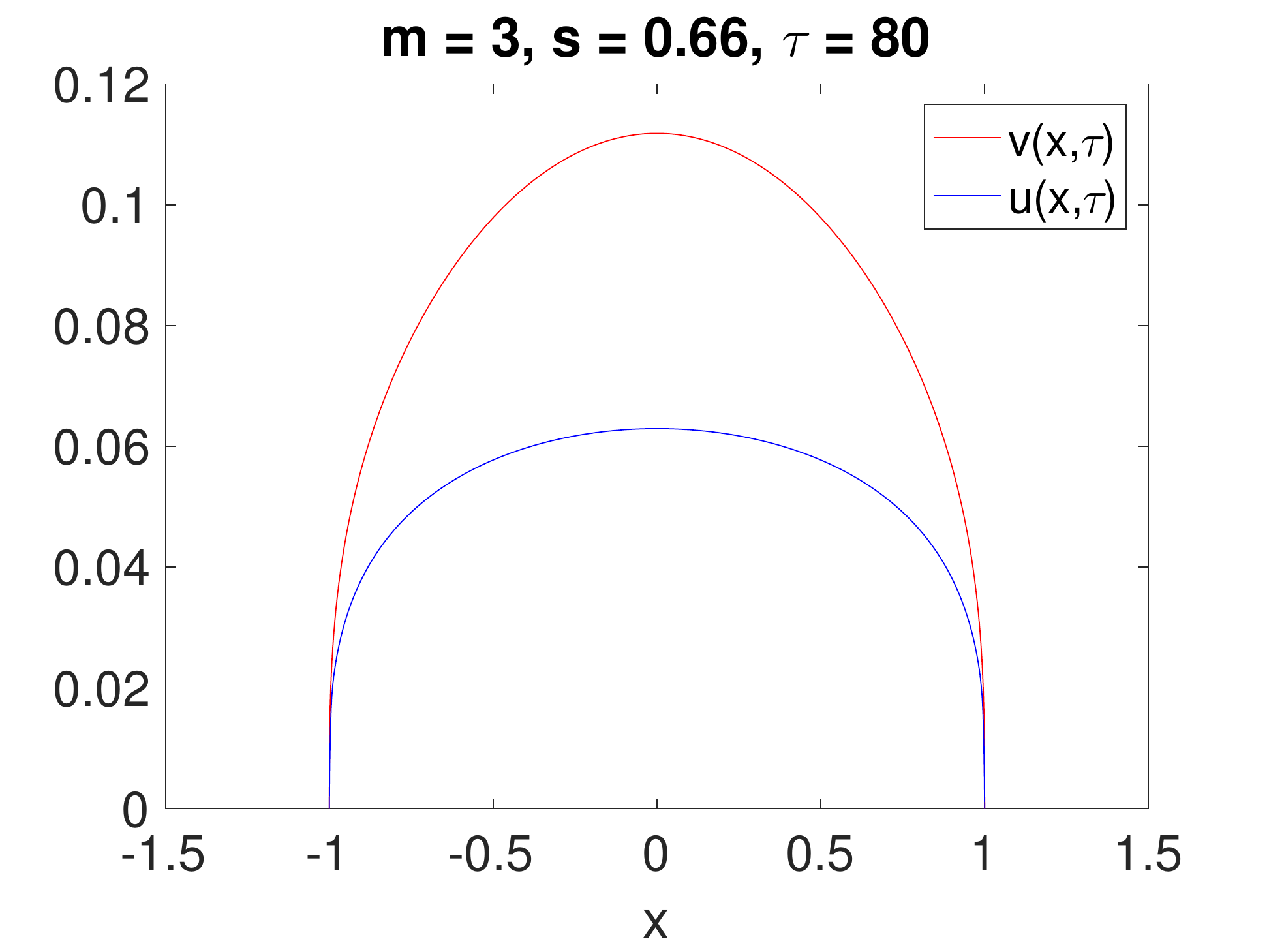}}
	\caption{Fractional porous medium initial condition, numerical solution (blue) and scaled eigenfunction determining the boundary behavior (red). In all cases $\Omega=(-1,1)$, $N_h=1001$ ($h=0.002$), and $\varDelta \tau=h^{2s}/m$. Values of $m$, $s$, and $\tau$ as indicated in the respective plots. Parameters of the experiment: $k=1$, $\theta=1$, $p=1$, $\eta=1$.}
	\label{fig:FPM_sim}
\end{figure}	

For the particular combinations of $m$ and $s$ presented in \cref{fig:FPM_sim} the theoretical results of Bonforte et al.~\cite{bonforte_etal-arxiv-2017} predict a long-time boundary behavior of the type
\[
u(x,\tau)\asymp v(x,\tau):=\frac{\phi_1( x)^{1/m}}{\tau^{1/(m-1)}},
\]
where $\phi_1$ is the first eigenfunction of the Laplacian with homogeneous Dirichlet boundary condition in $\Omega$. Given $a,b \in \R_+$ we follow the notation used in \cite{bonforte_etal-arxiv-2017} and  write $a\asymp b$ whenever there exist universal constants $c_0,c_1>0$ such that $c_0 b\leq a \leq c_1 b$.

The predicted boundary behaviour presented in \cite{bonforte_etal-arxiv-2017}  is recovered by our simulations and shown in the second and third plots of \cref{fig:FPM_sim}, where the numerical solution obtained with our method (blue) is compared to the expected function $v(x,\tau)$ (red).

\section{Conclusions}
\label{sec:conclusions}
In the present paper we investigate a method to define a nonlocal operator acting on bounded domains that, in the unbounded case, recovers the classical definition of the fractional Laplacian $(-\Delta)^s$. The studied method is characterized by the fact that the imposition of local boundary conditions is sufficient to obtain a well-posed formulation of the nonlocal operator without requiring additional constraints to be imposed on the complement of the considered finite domain, in analogy with local problems. As pointed out in the introduction, this characteristic is relevant for applied problems and for the physical foundation of fractional models.

We consider for our purposes the spectral definition of the fractional Laplacian and exploit the equivalence between this definition and the heat-semigroup formulation of the operator on bounded domains. The latter definition allows for a much more intuitive and transparent formulation in which the role played by the local boundary conditions (passed on to the local heat problem) is clear and all ambiguity of having local conditions for a nonlocal operator is removed.
 
This results in a power-law weighted integration of the evolving discrepancy between the solution of the heat equation and its initial condition. Within this framework, we propose a novel discretization scheme. An FE method is adopted to obtain the approximated solution of the heat equation and suitable quadrature formulas are used to approximate the integral.

Unlike other discretization methods, the proposed approach does not rely on the computation (or the approximation) of the spectrum of the Laplacian operator on the considered finite domain. In fact, a key feature of our strategy lies in the flexibility offered by FE in handling well not only simple regular geometries but also general bounded domains for which explicit knowledge of the Laplacian spectrum is missing. Moreover, once the accurate computation of the heat equation solution is available, our method simply requires the approximation of a one-dimensional integral, independently from the dimension in which the fractional Laplacian of the given function is sought.

In this work, we start by considering the case of homogeneous boundary conditions and propose two discretizations of the fractional operator (depending on a discretization parameter $h$) exhibiting two different rates of convergence. The main difference between the two results lies in the quadrature approximation of the power-law weighted integral. In the first discretization, we truncate the integral by ignoring its singular part (near the origin), adopt a na\"{i}ve mid-point quadrature rule for the middle part, and exploit the exponential convergence to the steady-state of the solution of the heat equation to approximate the tail. Doing so and suitably choosing the parameters of the FE computation, we obtain an error that behaves like $\mathcal{O}(h^{p(1-s)})$ for some positive $p$ and for values of the fractional order $s\in (0,1)$. This approach produces satisfactory results for values of $s$ close to its lower bound. 
By adapting the quadrature of the integral representing the operator and ensuring that the FE provide the required higher temporal accuracy for the solution to the heat equation, we derive a second discretization resulting in an error which is $\mathcal{O}(h^{p(2-s)})$. Although the proof of this result comes at the expenses of requiring some higher regularity to the function on which the action of the fractional Laplacian is computed, the accuracy of this second discretization ensures a decay of the error at least of the order $\mathcal{O}(h^{p})$, and hence this discretization is suitable to deal with the case of $s$ close to one. 

The case of non-homogeneous boundary conditions is considered next. We propose a definition of a fractional-type operator consistent with these inhomogeneous constraints and formulated as the power-law weighted integral of the evolving discrepancy of a heat equation solution, in which non-homogeneous boundary conditions are imposed, and its initial condition satisfying these conditions. We show that such an operator can be rewritten as the fractional Laplacian coupled to homogeneous boundary conditions, for a suitable shifted function, and we use the same two discretizations obtained in the homogeneous case to derive corresponding discretizations of the new nonlocal operator.  

Our numerical results show performance of our method for both one and two dimensional domains. The theoretical rate of convergence is very well recovered in all the analyzed cases and in the two-dimensional setting we also illustrate the performance of our method on a general irregular bounded domain. We conclude the numerical section with an example of a parabolic problem for which the computation of the fractional Laplacian could be used. In particular, we consider a fractional porous medium equation in which homogeneous Dirichlet conditions are imposed at the boundary. We compare our numerical solution with the analytic results on the boundary behavior of the solution given in \cite{bonforte_etal-arxiv-2017} and show that the expected behavior is well captured by our method, for both combinations of parameters considered.  
 
The approach followed in this work provides a natural framework that can be used to obtain either discretizations of the fractional Laplacian defined via the heat-semigroup on the whole $\mathbb{R}^n$ or to compute discretizations of inverse fractional powers of second order operators (i.e. solutions of elliptic problems associated to fractional operators) on bounded domains. In the unbounded settings, the use of FE to accurately model non rectangular domains is not required anymore due to the absence of a boundary, and the simpler finite difference approach can be adopted. On bounded domains, inverse fractional powers of second order operators can still be represented via integral expressions involving the corresponding heat-semigroups. The kernel of these expressions become locally integrable near the origin, avoiding some of the difficulties we had to deal with in the present work, but integrability is lost at infinity. Nevertheless, once again one can exploit the exponential convergence of the solution to the heat equation towards a steady-state to overcome this issue. Both these generalizations are current research topics of the authors of this paper, and the outlined results will be presented in two forthcoming papers. 
 
\section*{Acknowledgments} 
We are grateful to the two anonymous reviewers of this paper for their careful revision and useful comments that helped to improve significantly the quality of our manuscript. This research is supported by the Basque Government
through the BERC 2014-2017 program and by the Spanish Ministry of Economy and Competitiveness MINECO through BCAM Severo Ochoa excellence accreditation SEV-2013-0323, and through projects MTM2015-69992-R ``BELEMET''; MTM2016-76016-R ``MIP''. F.~del Teso is also supported by the Toppforsk (research excellence) project Waves and Nonlinear Phenomena (WaNP), grant no.~250070 from the Research Council of Norway, and the ERCIM ``Alain Benoussan" fellowship program. The work was done during the second author's stay at BCAM as a visiting fellow.

\bibliographystyle{siamplain}

\end{document}